\documentclass[a4paper,10pt]{article}

\usepackage[english]{babel}
\selectlanguage{english}
\usepackage{graphicx}
\usepackage{url}
\usepackage{booktabs}
\usepackage{array}
\input xy
\xyoption{all}
\usepackage{comment,xcomment,amssymb,amsmath,amsthm,color}
\usepackage{rotating}
\usepackage{latexsym,mathtools}
\usepackage{caption}
\captionsetup[figure]{name=Table}

\newcommand{\abs}[1]{\left\vert#1\right\vert}
\newcommand{\R}{\mathbb{R}}
\newcommand{\im}{\mathrm{Im}\,}         
\newcommand{\lie}[1]{\mathfrak{#1}}     
\newcommand{\Z}{\mathbb{Z}}

\newcommand{\N}{\mathbb{N}}

\newcommand{\hook}{\lrcorner\,}

\newcommand{\LieG}[1]{\mathrm{#1}}      

\newcommand{\SU}{\mathrm{SU}}
\newcommand{\Sp}{\mathrm{Sp}}

\newcommand{\SO}{\mathrm{SO}}

\newcommand{\Gtwo}{\mathrm{G}_2}

\newcommand{\so}{\mathfrak{so}}

\newcommand{\GL}{\mathrm{GL}}
\newcommand{\SL}{\mathrm{SL}}

\newcommand{\id}{\mathrm{Id}}   

\newcommand{\ann}{^\mathrm{o}}
\newcommand{\gl}{\lie{gl}}
\newcommand{\Sl}{\lie{sl}}
\newcommand{\Span}[1]{\operatorname{Span}\left\{#1\right\}}
\newcommand{\tran}[1]{\prescript{t}{}{#1}}

\DeclareMathOperator{\ad}{ad}

\DeclareMathOperator{\coker}{Coker}
\DeclareMathOperator{\hol}{Hol}

\DeclareMathOperator{\ric}{Ric}

\theoremstyle{plain}
\newtheorem{proposition}{Proposition}
\newtheorem{theorem}[proposition]{Theorem}
\newtheorem{lemma}[proposition]{Lemma}
\newtheorem{corollary}[proposition]{Corollary}

\theoremstyle{definition}

\newtheorem{example}[proposition]{Example}
\theoremstyle{remark}
\newtheorem{remark}{Remark}

\newcommand{\D}[1]{\frac{\partial}{\partial #1}}
\title{The Ricci tensor of almost parahermitian manifolds}
\author{Diego Conti and Federico A. Rossi}

\begin{document}

\maketitle

\begin{abstract}
We study the pseudoriemannian geometry of almost parahermitian manifolds, obtaining a formula for the Ricci tensor of the Levi-Civita connection. The formula uses the intrinsic torsion of an underlying $\SL(n,\R)$-structure; we express it in terms of exterior derivatives of some appropriately defined differential forms.

As an application, we construct Einstein and Ricci-flat examples on Lie groups. We disprove the parak\"ahler version of the Goldberg conjecture, and obtain the first compact examples of a non-flat, Ricci-flat nearly parak\"ahler structure.

We study the paracomplex analogue of the first Chern class in complex geometry, which obstructs the existence of Ricci-flat parak\"ahler metrics.
\end{abstract}

\renewcommand{\thefootnote}{\fnsymbol{footnote}} 
\footnotetext{\emph{MSC class}: 53C15; 53C10, 53C29, 53C50.}
\footnotetext{\emph{Keywords}: Einstein pseudoriemannian metrics, almost parahermitian structures, intrinsic torsion, Ricci tensor.}
\footnotetext{This work was partially supported by FIRB 2012 ``Geometria differenziale e teoria geometrica delle funzioni'' and GNSAGA of INdAM.}
\renewcommand{\thefootnote}{\arabic{footnote}} 

\section{Introduction}
\label{intro}
Paracomplex geometry was introduced by Libermann \cite{Libermann:paracomplex1952}; in analogy with complex geometry, it is defined by a tensor $K$ with $K^2=\id$, whose eigenspaces are integrable distributions of dimension $n$. The local geometry is that of a product, but things become more complicated if a metric enters the picture. The natural compatibility condition to impose is that $K$ be an anti-isometry, so that $F=g(K\cdot, \cdot)$ defines a two-form; if this form is closed, the metric is said to be parak\"ahler. Such a metric is necessarily of neutral signature; its holonomy is contained in $\GL(n,\R)$, endowed with its standard action on $\R^n\oplus(\R^n)^*$. 

Parak\"ahler manifolds carry a natural bilagrangian structure; as such, they form a natural object of study in symplectic geometry (see \cite{EtayoSantamaria,HarveyLawson}); they also provide a natural setting for the study of the mean curvature flow, which is proved to preserve Lagrangian submanifolds in \cite{ChursinSchaferSmoczyk}, under some  assumptions on the curvature. Parak\"ahler geometry  also finds applications in physics (\cite{CMMS:VectorI2004}) and in the study of optimal transport (\cite{KimMcCannWarren}). We refer to \cite{CruceanuFortunyGadea:survey1996} for a survey; a more recent reference is  \cite{AlekseevskyMedoriTomassini:Homogeneous2009}.

Like in K\"ahler geometry, the Ricci tensor of a parak\"ahler manifold is given by $-\frac12 dd^c\log \abs{\phi}^2$, where $\phi$ is a local holomorphic volume form (see \cite{HarveyLawson}); this indicates that the existence of a parallel paracomplex volume form forces the Ricci to be zero. In terms of the restricted holonomy $\hol_0$, this condition is equivalent to $\hol_0\subset\SL(n,\R)$; thus, pseudoriemaniann metrics of neutral signature with (restricted) holonomy contained in $\SL(n,\R)$ are Ricci-flat.

This observation leads us to consider the smaller structure group $\SL(n,\R)$ rather than $\GL(n,\R)$; we do not impose integrability conditions, so that the eigendistributions of $K$ are not necessarily integrable, nor is $F$ necessarily closed. The failure of the holonomy condition is measured by a tensor called \emph{intrinsic torsion}; it follows from the above that the intrinsic torsion determines the Ricci tensor. An explicit formula to this effect is the main result of this paper (Theorems~\ref{thm:ricci}, \ref{thm:RicciS2}). We note that similar situations have been studied in \cite{Bryant:remarks} and \cite{BedulliVezzoni:SU3} for the Riemannian holonomy groups $\Gtwo$ and $\SU(3)$; see also \cite{CabreraSwann} for similar computations relative to the group $\Sp(n)\Sp(1)$. Our methods, however, are more akin to those of \cite{Conti:qc}, and they allow us to find an explicit formula valid in any dimension.

Intrinsic torsion relative to the structure group $\GL(n,\R)$ can be identified with the covariant derivative of $F$ under the Levi-Civita connection; the results of \cite{GadeaMasque} imply that, for $n\geq 3$, $\GL(n,\R)$-intrinsic torsion decomposes into eight components, corresponding to eight non-isomorphic $\GL(n,\R)$-modules $W_1\oplus\dotsb\oplus W_8$. Under $\SL(n,\R)$, these components remain irreducible, but two extra components appear, giving rise to ten intrinsic torsion classes. This situation is somewhat different from that of almost hermitian  geometry, where in six dimensions the Gray-Hervella intrinsic torsion classes become reducible upon reducing from $\LieG{U}(3)$ to $\SU(3)$ (see \cite{GrayHervella,ChiossiSalamon}).

Beside the fundamental form $F$, a manifold with an $\SL(n,\R)$-structure carries decomposable $n$-forms $\alpha,\beta$, characterized as volume forms on the two distributions defined by $K$. In fact, the structure group $\SL(n,\R)$ is the largest subgroup of $\GL(2n,\R)$ that fixes the corresponding elements $F$, $\alpha$ and $\beta$ of $\Lambda(\R^{2n})$. These differential forms are closed if and only if the intrinsic torsion is zero (see Proposition~\ref{prop:forms}); structure groups satisfying this condition are known as strongly admissible (see \cite{Bryant}). This allows us to restate our formula for the Ricci tensor purely in terms of exterior derivatives (Theorem~\ref{thm:restateRicViaForms}).

The language of $\SL(n,\R)$-structure enables us to construct a cohomological invariant of paracomplex manifolds, analogous to the first Chern class in complex geometry, which obstructs the existence of Ricci-flat parak\"ahler metrics; we obtain a sufficient topological condition for its vanishing (Theorem~\ref{thm:aubinyau_dei_poveri}). One difference with the K\"ahler case is that compact parak\"ahler Einstein manifolds are necessarily Ricci-flat, as we show in Proposition~\ref{prop:pkeinsteincompact}. 
Thus, unlike in the compact K\"ahler case, the invariant does not quite describe whether compatible parak\"ahler Einstein metrics have zero or non-zero scalar curvature. This is also not true in the non-compact setting, as we show by producing a non-compact parak\"ahler Einstein manifold with nonzero scalar curvature where the invariant is zero (Example~\ref{ex:Einstein}).

We then turn to the construction of Einstein examples. Examples of compact, Ricci-flat parak\"ahler manifolds appear in \cite{HarveyLawson}; these examples, however, are also flat. Non-flat examples on compact nilmanifolds were constructed by the second author in \cite{RossiPhD}; a similar example appears in Section~\ref{sec:examples}. These examples are homogeneous, marking a difference between Riemannian and pseudoriemannian geometry: by  \cite{AlekseevskiKimelFel}, homogeneous Riemannian Ricci-flat manifolds are flat. Parak\"ahler Einstein manifolds with non-zero scalar curvature that are homogeneous under a semisimple Lie group are classified in \cite{AlekseevskyMedoriTomassini:Homogeneous2009}.

Outside  the parak\"ahler setting, one significant intrinsic torsion class consists of nearly parak\"ahler structures, characterized by the fact that $\nabla K$ is skew-symmetric in the first two indices. In our language, this means that the $\GL(n,\R)$ intrinsic torsion is contained in $W_1\oplus W_5$ at each point; we say that its intrinsic torsion \emph{class} is $\mathcal{W}_1+\mathcal{W}_5$. Examples of Einstein and Ricci-flat nearly parak\"ahler metrics are constructed in  \cite{IvanovZamkovoy:parahermitian}, \cite{CortesSchafer} and \cite{Schafer:conical}. In fact, it follows easily from our formula  (see Corollary~\ref{cor:nk}) that nearly parak\"ahler manifolds of dimension six are automatically Einstein, as originally proved in \cite{IvanovZamkovoy:parahermitian}.

As a more restrictive condition, we study the intrinsic torsion class $\mathcal{W}_1$; we show that this is Ricci-flat in all dimensions. Considering left-invariant structures on nilpotent Lie groups,  we obtain several explicit examples in dimension eight (Theorem~\ref{thm:nkricciflat}). These structures are automatically nearly parak\"ahler and Ricci-flat; they are also non-flat, and the underlying manifold is compact. We note that the previously known examples of Ricci-flat nearly parak\"ahler manifolds were either non-compact (\cite{Schafer:conical}) or flat (\cite{CortesSchafer}).

The intrinsic torsion class $\mathcal{W}_2$ is also Ricci-flat. This leads to a counterexample of the  paracomplex version of the Goldberg conjecture as stated in  \cite{Matsushita}, asserting that a compact, Einstein almost parak\"ahler manifold is necessarily parak\"ahler (Proposition~\ref{prop:goldberg}). Notice that the K\"ahler version of the conjecture is known to hold for non-negative scalar curvature (\cite{Sekigawa}); our example is Ricci-flat, showing that the paracomplex situation is different.

The intrinsic torsion class $\mathcal{W}_3$ is not Ricci-flat, but we are able to construct a compact Ricci-flat example on a nilmanifold. However, the class $\mathcal{W}_4$ is different: a nilpotent Lie group with an invariant structure with intrinsic torsion in $W_4+W_8$ is necessarily parak\"ahler (Proposition~\ref{prop:nilpotentw4w8}).

Observing that changing the sign of $K$ has the effect of swapping $W_i$ with $W_{i+4}$, this concludes the analysis of ``pure'' intrinsic torsion classes. By taking products, it follows that all intrinsic torsion classes that do not contain $\mathcal{W}_4$ or $\mathcal{W}_8$ can be realized as the intrinsic torsion class of a nilmanifold with a non-flat, Ricci-flat metric (Proposition~\ref{prop:ricciflatproducts}).

\section{The structure group $\GL(n,\R)$}
An almost paracomplex structure on a manifold of dimension $2n$ is a decomposition of the tangent space in two subbundles of rank $n$. The tangent space is then modeled on a direct sum
\begin{equation}
 \label{eqn:linearmodel}
T=V\oplus H,
\end{equation}
where $V$ and $H$ are real vector spaces of dimension $n$; explicitly, we shall fix a basis $e_1,\dotsc, e_{2n}$ of $T$ with
\[V=\Span{e_1,\dotsc, e_n},\quad H=\Span{e_{n+1},\dotsc, e_{2n}},\]
and denote by $e^1,\dotsc, e^{2n}$ the dual basis of $T^*$. In these terms, we can think of an almost paracomplex structure as a $\GL(V)\times \GL(H)$-structure. In analogy with complex geometry, one considers an endomorphism of $T$ with $K^2=\id$, namely
\[K =\id_V - \id_H = e^i\otimes e_i - e^{n+j}\otimes e_{n+j};\]
here and in the sequel, summation over repeated indices is implied; we adopt the convention that lower case indices range from $1$ to $n$, and upper case indices range from $1$ to $2n$. 

It is clear that a manifold of dimension $2n$ admits an almost paracomplex structure if and only if it admits a distribution of rank $n$; for example, $S^{2n}$ does not have an almost paracomplex structure \cite[Theorem~27.18]{Steenrod}. Thus, the long-standing problem of whether the six-dimensional sphere admits an integrable complex structure has a trivial answer in the paracomplex setting.

Like in almost complex geometry, differential forms on an almost paracomplex structure can be decomposed according to type via
\[\Lambda^k T^*=\bigoplus_{p+q=k} \Lambda^{p,q}, \quad \Lambda^{p,q} = \Lambda^pV^*\otimes \Lambda^qH^*;\]
in the literature, one also finds the notations $TM=T_+M\oplus T_-M$ and $\Lambda^{p,q}_\pm$.

An almost paracomplex structure is said to be \emph{paracomplex} or \emph{integrable} if the two rank $n$ distributions are integrable. By the Frobenius Theorem, this is equivalent to requiring that the exterior derivative have the form
\[d\colon \Lambda^{p,q}\to \Lambda^{p+1,q}+\Lambda^{p,q+1},\]
or to the vanishing of the Nijenhuis tensor
\[N(X,Y)=[X,Y]+[KX,KY]-K[KX,Y]-K[X,KY].\]
In the language of $G$-structures, integrability can be expressed in terms of \emph{intrinsic torsion}. Recall that the intrinsic torsion of a $G$-structure takes values in the cokernel of the linear map $\partial_G$, defined as the restriction to $T^*\otimes\lie{g}$ of the alternating map
\[\partial\colon T^*\otimes \gl(T)\to \Lambda^2T^*\otimes T, \quad e^i\otimes (e^j\otimes e_k) \to e^{ij}\otimes e_k;\]
more precisely, the intrinsic torsion is obtained by projecting on this space the torsion of any connection on the $G$-structure. 

If $\partial_G$ has a left inverse $s$, any $G$-structure has a unique \emph{minimal} connection, namely one with torsion in $\ker s$. In the present case, the alternating map is not injective, but we still have the following:
\begin{proposition}
\label{prop:paracomplexit}
Every $\GL(V)\times \GL(H)$-structure admits a connection with torsion taking values in 
\begin{equation}
 \label{eqn:paracomplexit}
\Lambda^2V^*\otimes H+\Lambda^2H^*\otimes V;
\end{equation}
the torsion $\Theta$ of any such connection  is related to the Nijenhuis tensor via
\[\Theta(X,Y)=-\frac14 N(X,Y).\]
\end{proposition}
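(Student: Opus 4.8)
The plan is to treat the two assertions in turn: first, a standard cokernel computation for the structure group $G=\GL(V)\times\GL(H)$, and second, a direct identification of the torsion with the Nijenhuis tensor on decomposed vector fields.

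\emph{Existence.} Here $\lie g=\gl(V)\oplus\gl(H)$, realized as block-diagonal endomorphisms of $T=V\oplus H$, so that, splitting by type,
\[T^*\otimes\lie g=(V^*\otimes V^*\otimes V)\oplus(H^*\otimes V^*\otimes V)\oplus(V^*\otimes H^*\otimes H)\oplus(H^*\otimes H^*\otimes H),\]
while $\Lambda^2T^*\otimes T$ is the sum of $\Lambda^2V^*\otimes V$, $\Lambda^2V^*\otimes H$, $V^*\otimes H^*\otimes V$, $V^*\otimes H^*\otimes H$, $\Lambda^2H^*\otimes V$ and $\Lambda^2H^*\otimes H$. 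I would then note that $\partial$ carries the first summand of $T^*\otimes\lie g$ onto $\Lambda^2V^*\otimes V$ --- the classical surjectivity of the alternating map $V^*\otimes\gl(V)\to\Lambda^2V^*\otimes V$ --- the last onto $\Lambda^2H^*\otimes H$, and the two mixed summands linearly isomorphically onto $V^*\otimes H^*\otimes V$ and $V^*\otimes H^*\otimes H$. Thus $\partial_G$ surjects onto the sum of these four summands, and $\coker\partial_G$ is the complementary sum $\Lambda^2V^*\otimes H\oplus\Lambda^2H^*\otimes V$, which is \eqref{eqn:paracomplexit}. Being $G$-equivariant, $\partial_G$ has constant rank; so starting from any connection on the structure, with torsion $\Theta_0=\Theta_0'+\Theta_0''$ decomposed along $\Lambda^2T^*\otimes T=\operatorname{Im}\partial_G\oplus(\Lambda^2V^*\otimes H\oplus\Lambda^2H^*\otimes V)$, we add to the connection form a section $\gamma$ of $T^*\otimes\lie g$ with $\partial\gamma=-\Theta_0'$, obtaining a connection on the structure whose torsion is $\Theta_0''\in\Lambda^2V^*\otimes H+\Lambda^2H^*\otimes V$.

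\emph{The torsion identity.} I would first evaluate $N$ on decomposed arguments. Writing $Z_V,Z_H$ for the $V$- and $H$-components of a vector field $Z$, one has $K[X,Y]=[X,Y]_V-[X,Y]_H$, and $KX=X$ (resp.\ $KX=-X$) for $X\in\Gamma(V)$ (resp.\ $X\in\Gamma(H)$). Substituting in the definition of $N$ gives, in one line each, $N(X,Y)=4[X,Y]_H$ for $X,Y\in\Gamma(V)$, $N(X,Y)=4[X,Y]_V$ for $X,Y\in\Gamma(H)$, and $N(X,Y)=0$ when $X\in\Gamma(V)$, $Y\in\Gamma(H)$; in particular $-\tfrac14N$ takes values in \eqref{eqn:paracomplexit}. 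Now let $\nabla$ be a connection on the structure, so $\nabla$ preserves $V$ and $H$, and assume its torsion $\Theta(X,Y)=\nabla_XY-\nabla_YX-[X,Y]$ lies in \eqref{eqn:paracomplexit}. For $X,Y\in\Gamma(V)$ both $\nabla_XY,\nabla_YX\in\Gamma(V)$, so $\Theta(X,Y)$ has $V$-component $\nabla_XY-\nabla_YX-[X,Y]_V$ and $H$-component $-[X,Y]_H$; since \eqref{eqn:paracomplexit} forces $\Theta(X,Y)\in H$, the former vanishes and $\Theta(X,Y)=-[X,Y]_H=-\tfrac14N(X,Y)$. The case $X,Y\in\Gamma(H)$ is symmetric, and for $X\in\Gamma(V)$, $Y\in\Gamma(H)$ the space \eqref{eqn:paracomplexit} contains no $V^*\otimes H^*\otimes T$ term, forcing $\Theta(X,Y)=0=-\tfrac14N(X,Y)$. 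Hence $\Theta=-\tfrac14N$; this also shows a posteriori that all connections of the prescribed type share the same torsion, so the statement ``the torsion of any such connection'' is unambiguous.

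There is no serious obstacle here: the existence part is the usual intrinsic-torsion machinery applied once the cokernel has been computed, and the identity follows from observing that the type constraint on $\Theta$ pins it down completely. The only points requiring care are keeping track of which components of the Lie bracket survive in each of the three cases and getting the normalization factor $-\tfrac14$ right.
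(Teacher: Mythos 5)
Your proof is correct and follows essentially the same route as the paper: the same type decomposition of $T^*\otimes(\gl(V)\oplus\gl(H))$ and $\Lambda^2T^*\otimes T$ to identify the cokernel of $\partial_G$ with \eqref{eqn:paracomplexit}, followed by the standard modification of an arbitrary adapted connection, and then the identification $\Theta=-\tfrac14N$ using that the connection preserves $V$ and $H$. The only difference is cosmetic: the paper packages the complement of $\ker\partial_G$ via an explicit left inverse $s$, and leaves the case-by-case evaluation of $N$ on $\Gamma(V)$ and $\Gamma(H)$ implicit, which you spell out.
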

\begin{proof}
The representation $\Lambda^2T^*\otimes T$ splits as
\[\Lambda^{2,0}\otimes V+\Lambda^{2,0}\otimes H+\Lambda^{1,1}\otimes V+\Lambda^{1,1}\otimes H+\Lambda^{0,2}\otimes V+\Lambda^{0,2}\otimes H,\]
and
\[T^*\otimes (\gl(V)+\gl(H))= (S^2V^*\otimes V+S^2H^*\otimes H) + (\Lambda^{2,0}\otimes V + \Lambda^{0,2}\otimes H+\Lambda^{1,1}\otimes T),\]
where the first component represents the kernel of the alternating map. It follows that the restriction of $\partial$ to the second component  has a left inverse $s$ with kernel \eqref{eqn:paracomplexit}. Therefore, if $\overline\omega$ is any connection on the $\GL(V)\times \GL(H)$-structure with torsion $\overline\Theta$, the connection $\omega=\overline\omega-s(\overline\Theta)$
has torsion $\overline\Theta - \partial (s(\overline\Theta))\in \ker s.$
Applying the definition
$\Theta(X,Y)=\nabla_X Y-\nabla_Y X-[X,Y],$
and using the fact that $V$ and $H$ are preserved by the connection $\omega$,  we obtain the required relation.
\end{proof}
\begin{remark}
In terms of an adapted coframe $e^1,\dotsc,e^{2n}$, the torsion of a connection such as in Proposition~\ref{prop:paracomplexit} may be written as
\[\Theta = (de^i)^{0,2}\otimes e_i + (de^{n+i})^{2,0}\otimes e_{n+i}.\]
\end{remark}

\smallskip
Given an almost paracomplex structure,  a pseudoriemannian metric $g$ is called \emph{almost parahermitian} if
\[g(KX,KY)=-g(X,Y).\]
Such a metric is necessarily of neutral signature $(n,n)$; relative to  \eqref{eqn:linearmodel}, it determines an isomorphism $V\cong H^*$. Accordingly, the structure group is reduced to $\GL(n,\R)$, and the tangent space is modeled on the representation
\[T=V\oplus V^*.\]
Alternatively, we may think of an almost parahermitian structure as determined by a non-degenerate two-form $F$ of type $(1,1)$; form and metric are related via
\[F(X,Y)=g(KX,Y).\]
In this context, the basis of $T$ can always be chosen so that 
\[g=e^i\odot e^{n+i}, \quad F=e^{1,n+1}+\dots + e^{n,2n},\]
where $e^i\odot e^{n+i}$ stands for $e^i\otimes e^{n+i}+e^{n+i}\otimes e^i$ and $e^{i,n+i}$ for $e^i\wedge e^{n+i}$;
using the metric $g$, we will use the identification
\[V^*\ni e^i\mapsto e_{n+i}\in H.\]

Due to the existence of the volume form $F^n$, almost parahermitian manifolds are orientable.

Whilst a paracomplex manifold is always locally a product $M\times N$, one should not think of parahermitian geometry as a fancy way to describe Cartesian products. At the topological level, this can be seen from the following:
\begin{proposition}
\label{prop:noproduct}
Let $M$ and $N$ be manifolds of dimension $n$, and assume that $TM$ is not trivial. Then the product paracomplex structure on $M\times N$ does not admit a compatible parahermitian structure.
\end{proposition}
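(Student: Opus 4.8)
The plan is to extract from a compatible parahermitian structure an isomorphism of vector bundles over $M\times N$ and then restrict it to a slice to contradict the hypothesis that $TM$ is not trivial.

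First I would unwind what compatibility means here. Write $\pi_M,\pi_N$ for the two projections; the product paracomplex structure on $M\times N$ has $V=\pi_M^*TM$, $H=\pi_N^*TN$, and $K=\id_V-\id_H$. A compatible parahermitian metric $g$ satisfies $g(KX,KY)=-g(X,Y)$. Applying this with $X,Y$ both tangent to the $V$-distribution (where $K=\id$) forces $g|_V=0$, and with $X,Y$ tangent to the $H$-distribution (where $K=-\id$) forces $g|_H=0$; so $V$ and $H$ are complementary isotropic subbundles of the nondegenerate symmetric pairing $g$. Hence $v\mapsto g(v,\cdot)|_H$ is a well-defined isomorphism of vector bundles $V\to H^*$ over $M\times N$ --- this is exactly the identification $V\cong H^*$ already recorded in the text. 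In the present situation this reads $\pi_M^*TM\cong\pi_N^*T^*N$ as real vector bundles over $M\times N$.

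Second, I would restrict this isomorphism to a slice $M\times\{q\}$ for some fixed $q\in N$. Under the canonical identification $M\times\{q\}\cong M$, the left-hand side becomes $TM$. On the right-hand side, $\pi_N$ restricted to $M\times\{q\}$ is the constant map to $q$, so $\pi_N^*T^*N$ restricts to the trivial bundle $M\times T^*_qN$. Thus $TM$ would be trivial of rank $n$, contradicting the hypothesis. (The argument is symmetric: restricting instead to $\{p\}\times N$ would force $TN$ trivial, so assuming nontriviality of either factor is enough. One could also phrase a weaker statement purely in terms of characteristic classes, using that $\pi_M^*$ of a characteristic class of $TM$ must agree with $\pi_N^*$ of one of $T^*N$ and that these live in complementary Künneth summands; but the slicing argument is shorter and gives more.)

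I do not expect a genuine obstacle. The only points needing a word of justification are that parahermitian compatibility yields a \emph{global} bundle isomorphism $V\cong H^*$ rather than a merely pointwise one --- immediate, since $g$ is a smooth section --- and that the pullback of a bundle along a constant map is trivial, which is clear. The substance of the proposition is simply the observation that parahermitian compatibility ties $TM$ to $T^*N$ across the whole product, which is impossible unless $TM$ (equivalently $TN$) is trivial.
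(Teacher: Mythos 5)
Your proposal is correct and follows essentially the same route as the paper: it derives the global bundle isomorphism $\pi_1^*TM\cong(\pi_2^*TN)^*$ from compatibility and then restricts to a slice $M\times\{q\}$, where the right-hand side becomes trivial while the left-hand side is $TM$. The only difference is that you spell out the isotropy argument behind $V\cong H^*$, which the paper takes as already established.
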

\begin{proof}
Let 
\[\pi_1\colon M\times N\to M, \quad \pi_2\colon M\times N\to N\] denote the projections. The $K$-eigenspaces for the product paracomplex structure on  $M\times N$ are $\pi_1^*TM$ and $\pi_2^*TN$. If a compatible parahermitian structure exists, the vector bundles
$\pi_1^*TM \cong (\pi_2^*TN)^*$
are isomorphic. Therefore, their restrictions to a submanifold $M\times\{y\}$ are also isomorphic. However, the restriction of $\pi_1^*TM$ is equivalent to $TM$, and the restriction of $(\pi_2^*TN)^*$ is trivial, which is absurd.
\end{proof}

At a point, we can think of the structure group $\GL(n,\R)$ as the stabilizer of $F$ in $\SO(n,n)$. At the Lie algebra level, this amounts to setting $B$ and $C$ to zero   in 
\[\so(n,n)=\biggl\{\begin{pmatrix} A & B \\ C & -\tran{A}\end{pmatrix}\mid  B=-\tran{B}, C=-\tran{C}\biggr\}.\]
Having a metric at our disposal, we can write orthogonal decompositions such as 
\[\so(n,n)=\gl(n,\R)\oplus \gl(n,\R)^\perp=\Sl(n,\R)\oplus \Sl(n,\R)^\perp,\]
where 
\[
\Sl(n,\R)=\{Tr(A)=0, B=0=C\},\quad  \Sl(n,\R)^\perp=\{A=\lambda I\}.\]
It will be convenient to fix the isomorphism
\begin{equation}
 \label{eqn:lambdaisso}
\Lambda^2T^*\ni \alpha\mapsto M_\alpha\in \so(n,n),  \quad \langle M_\alpha(v),w\rangle =\alpha(v,w).
\end{equation}
Explicitly,
\begin{gather*}
e^{ij}\mapsto e^i\otimes e_{n+j}-e^j\otimes e_{n+i}, \qquad e^{n+i,n+j}\mapsto e^{n+i}\otimes e_j-e^{n+j}\otimes e_i,\\
e^{i,n+j}\mapsto e^i\otimes e_j-e^{n+j}\otimes e_{n+i}.
\end{gather*}
\begin{lemma}
\label{lemma:glglscalar}
Through the identification \eqref{eqn:lambdaisso}, the Lie bracket on $\so(n,n)$ satisfies
\begin{equation*} 
[a_{ij}e^{ij},b_{kl}e^{n+k,n+l}] =\frac2n a_{ij}(b_{ij}-b_{ji}) e^{k,n+k} \mod \Sl(n,\R) + \gl(n,\R)^\perp.
\end{equation*}
\end{lemma}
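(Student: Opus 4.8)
The plan is to evaluate the bracket on the corresponding endomorphisms through the identification \eqref{eqn:lambdaisso}, using bilinearity of the Lie bracket to reduce to computing $[M_{e^{ij}},M_{e^{n+k,n+l}}]$ for fixed indices. From the explicit formulas for \eqref{eqn:lambdaisso} recorded just above, $M_{e^{ij}}=e^i\otimes e_{n+j}-e^j\otimes e_{n+i}$ maps $V$ into $H$ and annihilates $H$, whereas $M_{e^{n+k,n+l}}=e^{n+k}\otimes e_l-e^{n+l}\otimes e_k$ maps $H$ into $V$ and annihilates $V$. Applying the composition rule $(e^a\otimes e_b)\circ(e^c\otimes e_d)=e^a(e_d)\,(e^c\otimes e_b)$, one sees that $M_{e^{ij}}\circ M_{e^{n+k,n+l}}$ is an endomorphism of $H$ and $M_{e^{n+k,n+l}}\circ M_{e^{ij}}$ an endomorphism of $V$; hence the commutator is block-diagonal relative to $T=V\oplus H$, with its $V$-block equal to $-M_{e^{n+k,n+l}}\circ M_{e^{ij}}$.

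I would then observe that every block-diagonal element of $\so(n,n)$ --- that is, one with $B=C=0$ --- is congruent modulo $\Sl(n,\R)$, hence a fortiori modulo $\Sl(n,\R)+\gl(n,\R)^\perp$, to $\tfrac1n(\tr A)\,K$, where $A$ is its $V$-block and $K=\id_V-\id_H$ is the image under \eqref{eqn:lambdaisso} of $e^{k,n+k}=\sum_m e^{m,n+m}=F$; indeed the difference is block-diagonal with traceless blocks. Consequently
\[[M_{e^{ij}},M_{e^{n+k,n+l}}]\equiv -\tfrac1n\tr\bigl(M_{e^{n+k,n+l}}\circ M_{e^{ij}}\bigr)\,e^{k,n+k}\pmod{\Sl(n,\R)+\gl(n,\R)^\perp},\]
so it remains only to compute the trace. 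Expanding the product of the two two-term expressions into four summands, each a Kronecker delta times a rank-one endomorphism $e^a\otimes e_b$ of $V$, and using $\tr(e^a\otimes e_b)=e^a(e_b)$, one gets $\tr\bigl(M_{e^{n+k,n+l}}\circ M_{e^{ij}}\bigr)=2\delta_{jk}\delta_{il}-2\delta_{ik}\delta_{jl}$. Contracting with $a_{ij}b_{kl}$ and using bilinearity then yields
\[[a_{ij}e^{ij},b_{kl}e^{n+k,n+l}]\equiv -\tfrac1n a_{ij}b_{kl}\,(2\delta_{jk}\delta_{il}-2\delta_{ik}\delta_{jl})\,e^{k,n+k}=\tfrac2n a_{ij}(b_{ij}-b_{ji})\,e^{k,n+k},\]
as claimed.

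The computation is routine; the points that require care are the correct identification of which off-diagonal slot each factor occupies --- in particular that the $(0,2)$-form $e^{n+k,n+l}$ gives the map $H\to V$ rather than $V\to H$, which is what makes the commutator land in the block-diagonal part and ultimately explains the appearance of $F$ on the right-hand side --- together with the bookkeeping of the four Kronecker deltas and the final contraction, where one must respect the skew-symmetrization implicit in $b_{ij}-b_{ji}$.
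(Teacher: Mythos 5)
Your argument is correct and is essentially the paper's own computation: both evaluate the commutator of the endomorphisms $M_{e^{ij}}$ and $M_{e^{n+k,n+l}}$ given by \eqref{eqn:lambdaisso} and then extract the scalar (trace) part of the resulting block-diagonal element, which is exactly the reduction modulo $\Sl(n,\R)+\gl(n,\R)^\perp$. The paper records the intermediate bracket $[e^{ij},e^{n+k,n+l}]=-\delta_{il}e^{j,n+k}+\delta_{jl}e^{i,n+k}+\delta_{ik}e^{j,n+l}-\delta_{jk}e^{i,n+l}$ and leaves the projection implicit, whereas you organize the same four Kronecker deltas through the trace of the $V$-block; the results agree.
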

\begin{proof}
Follows from 
\[[e^{ij},e^{n+k,n+l}] =
-\delta_{il}e^{j,n+k} +\delta_{jl}e^{i,n+k} +\delta_{ik}e^{j,n+l}-\delta_{jk}e^{i,n+l}.\qedhere\]
\end{proof}

An almost parahermitian structure is called \emph{parak\"ahler} if one (hence both) of $K$ and $F$ is parallel under the Levi-Civita connection. More generally, $\nabla F$ can be identified with  the intrinsic torsion of a $\GL(n,\R)$-structure; the latter is known to decompose into eight components \cite{GadeaMasque}.

All finite-dimensional irreducible representations of $\GL(n,\R)$ appear inside some
\begin{equation}
 \label{eqn:Vrs}
V^{\otimes^r}\otimes (V^*)^{\otimes ^s},
\end{equation}
(see e.g. \cite{FultonHarris}). Relative to the Cartan subalgebra of diagonal matrices, let $L_i$ denote the weight that maps $\operatorname{diag}(a_1,\dotsc, a_n)$ to $a_i$. For any $k+h\leq n$ and integers
$\lambda_1\geq \dotsc\geq \lambda_k$,  $\lambda_n\geq \dotsc \geq \lambda_{n-h}$,
denote by $V_{\lambda_1,\dotsc, \lambda_k}^{\lambda_{n},\dotsc, \lambda_{n-h}}$ the representation with highest weight
\[\lambda_1 L_1 + \dotsc + \lambda_k L_k - (\lambda_{n-h}L_{n-h} + \dotsc + \lambda_n L_n).\]
These representations are also irreducible under $\SL(n,\R)$, but notice that under $\SL(n,\R)$ the numbers  $\lambda_1,\dotsc, \lambda_n$ are not determined uniquely by the representation.

With this notation, $V=V_1$ and $V^*=V^1$. More generally, 
\[\big(V_{\lambda_1,\dotsc, \lambda_k}^{\lambda_{n},\dotsc, \lambda_{n-h}}\big)^* =V^{\lambda_1,\dotsc, \lambda_k}_{\lambda_{n},\dotsc, \lambda_{n-h}};\]
in addition we have
\[\lie{sl}(n,\R)=V_1^1, \quad \Lambda^kV = V_{ \underbrace{\scriptstyle 1,\dots,1}_{\scriptstyle{k}}}, \quad S^kV=V_k.\]
We shall say that a representation has \emph{type} $(k,h)$ if it is the sum of irreducible representations of the form $V_{\lambda_1,\dotsc, \lambda_k}^{\lambda_{n},\dotsc, \lambda_{n-h}}$. In terms of Young diagrams, this says that the rows from $k+1$ to $n-h-1$ have the same length. It is clear that  $V^{\otimes^r}$ has type $(r,0)$; dually, $(V^*)^{\otimes^s}$  has type $(0,s)$. It now follows easily from the Littlewood-Richardson rule that \eqref{eqn:Vrs} has type $(r,s)$.

We can think of a representation of type $(r,s)$ as a representation of $\GL(n,\R)$ for any choice of $n>r+s$. The decomposition into irreducible components is then independent of $n$. For instance, for $s\geq r$ we have
\begin{gather*}
\Lambda^rV\otimes \Lambda^s V^* = V_{ \underbrace{{\scriptstyle{1,\dots,1}}}_{\scriptstyle{r}}}^{\overbrace{\scriptstyle1,\dotsc, 1}^{\scriptstyle{s}}} +  V_{ \underbrace{\scriptstyle1,\dots,1}_{\scriptstyle{r-1}}}^{\overbrace{\scriptstyle1,\dotsc, 1}^{\scriptstyle{s-1}}}+\dots + V^{\overbrace{\scriptstyle1,\dotsc, 1}^{\scriptstyle{s-r}}};
\end{gather*}
this also holds for $r+s=n$. Equivalently, we can write
\[\Lambda^{r,s}=\Lambda^{r,s}_0 + \{F\wedge\sigma,\sigma\in \Lambda^{r-1,s-1}\} \cong \Lambda^{r,s}_0 +\dots + \Lambda^{1,s-r+1}_0+ \Lambda^{0,s-r}.\]
In terms of an appropriate map $\Lambda\colon \Lambda^{r,s}\to\Lambda^{r-1,s-1}$, we have
\[\sigma=[\sigma]_0 + F\wedge \Lambda(\sigma), \quad \sigma\in\Lambda^{r,s}.\]
Explicitly,
\begin{equation}
 \label{eqn:LambdaOn21}
\Lambda(\gamma)=-\frac1{n-1} e_i\hook e_{n+i}\hook\gamma, \quad \gamma\in\Lambda^{2,1}+\Lambda^{1,2}.
\end{equation}

As a consequence of the Littlewood-Richardson rule, one obtains:
\begin{proposition}[\cite{GadeaMasque}]
The intrinsic torsion of a $\GL(n,\R)$-structures lies in 
\[T^*\otimes \lie{gl}(n,\R)^\perp=\coker \partial_{\GL(n,\R)}\cong W_1+\dots + W_8\]
where
\begin{gather*}
W_1=\Lambda^3V^*,\quad W_2=V^{2,1},\quad W_3=V_{1,1}^1 ,\quad W_4=V,\\
W_5=\Lambda^3V,\quad W_6=V_{2,1},\quad W_7=V^{1,1}_1 ,\quad W_8=V^*.
\end{gather*}
\end{proposition}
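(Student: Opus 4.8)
The plan is to compute $\coker\partial_{\GL(n,\R)}$ directly via representation theory, following the same bookkeeping already used in the proof of Proposition~\ref{prop:paracomplexit}. First I would identify the relevant spaces as $\GL(n,\R)$-modules: with $T=V\oplus V^*$, the target $\Lambda^2T^*\otimes T$ decomposes according to type as
\[
\Lambda^2T^*\otimes T = \bigoplus_{p+q=2}\Lambda^{p,q}\otimes(V\oplus V^*),
\]
i.e.\ into $\Lambda^2V^*\otimes V$, $\Lambda^2V^*\otimes V^*$, $(V^*\otimes V)\otimes V$, $(V^*\otimes V)\otimes V^*$, $\Lambda^2V\otimes V$, $\Lambda^2V\otimes V^*$. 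Dually, $T^*\otimes\gl(2n,\R)$ restricted to $T^*\otimes\so(n,n)$ and then to $T^*\otimes\gl(n,\R)$ is computed using the matrix description $\gl(n,\R)=\{A\}$ sitting inside $\so(n,n)$, so $\gl(n,\R)\cong V\otimes V^*$ as a $\GL(n,\R)$-module, and hence
\[
T^*\otimes\gl(n,\R)\cong (V^*\oplus V)\otimes V\otimes V^*.
\]
The complement $\gl(n,\R)^\perp$ inside $\so(n,n)$ is, by the matrix description, $\{B=-\tran B\}\oplus\{C=-\tran C\}\cong\Lambda^2V^*\oplus\Lambda^2V$ (with $\Sl(n,\R)^\perp=\{A=\lambda I\}$ the trivial summand sitting inside $V\otimes V^*$).

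The heart of the argument is then the standard one for intrinsic torsion: since $\so(n,n)$ itself carries a connection whose torsion lies in $\coker\partial_{\SO(n,n)}=0$ (the Levi-Civita connection), one has the exact-sequence identity
\[
\coker\partial_{\GL(n,\R)}\;\cong\;\frac{T^*\otimes\so(n,n)}{T^*\otimes\gl(n,\R)+\ker\partial}\;\oplus\;(\text{nothing else}),
\]
more precisely $\coker\partial_{\GL(n,\R)}\cong (T^*\otimes\gl(n,\R)^\perp)/(\text{image of }\partial\text{ overlap})$, but because $\nabla F$ lives in $T^*\otimes\gl(n,\R)^\perp\cong(V^*\oplus V)\otimes(\Lambda^2V^*\oplus\Lambda^2V)$ and the map to intrinsic torsion is an isomorphism onto this space (this is exactly the assertion that the intrinsic torsion is $\nabla F$ — cf.\ the discussion preceding the proposition in the excerpt), I would simply decompose
\[
(V^*\oplus V)\otimes(\Lambda^2V^*\oplus\Lambda^2V)
= V^*\otimes\Lambda^2V^* \;\oplus\; V^*\otimes\Lambda^2V \;\oplus\; V\otimes\Lambda^2V^* \;\oplus\; V\otimes\Lambda^2V
\]
into irreducibles. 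For $V^*\otimes\Lambda^2V^*$ one gets $\Lambda^3V^*\oplus V^{2,1}$ (the Pieri/Littlewood-Richardson rule for $V^*$ tensored with $\Lambda^2V^*=V^{1,1}$), giving $W_1$ and $W_2$. For $V^*\otimes\Lambda^2V$: since types $(0,1)$ and $(2,0)$ combine, one gets type $(1,1)$, and the decomposition is $V^1_{1,1}\oplus V$, i.e.\ $W_3\oplus W_4$ — here the contraction map $V^*\otimes\Lambda^2V\to V$ (pairing against one index) produces the $W_4=V$ summand and its kernel is $W_3=V^1_{1,1}$. Dually $V\otimes\Lambda^2V^*=W_7\oplus W_8$ and $V\otimes\Lambda^2V=W_5\oplus W_6$. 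Collecting, $\coker\partial_{\GL(n,\R)}\cong W_1+\dots+W_8$ with the stated highest weights, and one checks these eight modules are pairwise non-isomorphic by comparing types and weights.

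The main obstacle I expect is bookkeeping rather than conceptual: one must verify carefully that the map $T^*\otimes\gl(n,\R)^\perp\to\coker\partial_{\GL(n,\R)}$ induced by $\partial$ is genuinely an isomorphism and not merely surjective, i.e.\ that $\partial(T^*\otimes\gl(n,\R)^\perp)$ meets $\partial(T^*\otimes\gl(n,\R))$ only in zero modulo $\ker\partial$. This is where one uses that $\so(n,n)$ is strongly reductive with $\partial_{\SO(n,n)}$ surjective — equivalently that the Levi-Civita connection exists and is unique — so that $\coker\partial_{\GL(n,\R)}$ is exactly $T^*\otimes(\so(n,n)/\gl(n,\R))$ modulo the part of $\ker\partial$ that can be absorbed; since $\ker\partial = S^2V^*\otimes V\oplus S^2V\otimes V^*$ (the symmetric part, as in Proposition~\ref{prop:paracomplexit}) has type $(1,0)\oplus(0,1)$ and shares no common irreducible with $T^*\otimes\gl(n,\R)^\perp$ of the relevant shape once one tracks Young diagrams, the identification goes through. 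The only genuinely delicate points are (i) that for $n=3$ some of the $\Lambda^3$ factors degenerate ($\Lambda^3V\cong\Lambda^0=\R$ is one-dimensional but still the highest-weight module $V_{1,1,1}$, so the eight-component count is stable for $n\geq3$), and (ii) confirming the explicit highest weights $V^{2,1}$, $V_{2,1}$, $V^1_{1,1}$, $V^{1,1}_1$ against the Littlewood-Richardson expansion, which is the one computation worth writing out in the proof.
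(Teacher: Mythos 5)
Your argument is correct in substance and is essentially the proof the paper leaves implicit: the proposition is quoted from \cite{GadeaMasque} with only the remark that it is ``a consequence of the Littlewood--Richardson rule,'' and your computation --- identify $\coker \partial_{\GL(n,\R)}$ with $T^*\otimes\gl(n,\R)^\perp$ using that $\partial_{\so(n,n)}$ is bijective, then expand $(V\oplus V^*)\otimes(\Lambda^2V\oplus\Lambda^2V^*)$ by Pieri into the eight stated irreducibles --- is precisely the expansion of that remark. One auxiliary claim in your final paragraph is false, although it is also unnecessary: $S^2V^*\otimes V\cong V^2_1\oplus V^1$ does contain a copy of $V^*=W_8$, and dually $S^2V\otimes V^*$ contains $V=W_4$, so the kernel of the alternating map and $T^*\otimes\gl(n,\R)^\perp$ \emph{do} share irreducible constituents, and a Young-diagram comparison cannot rule out an overlap. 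The correct (and simpler) justification is the one you already name: since $\partial$ restricted to $T^*\otimes\so(n,n)$ is injective (indeed an isomorphism onto $\Lambda^2T^*\otimes T$, by existence and uniqueness of the Levi-Civita connection), the images of the complementary subspaces $T^*\otimes\gl(n,\R)$ and $T^*\otimes\gl(n,\R)^\perp$ meet only in zero, which is all that is needed; the kernel of the unrestricted alternating map never enters.
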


To compare with Proposition~\ref{prop:paracomplexit}, observe that the $\GL(V)\times\GL(H)$-intrinsic torsion corresponds to $W_1+W_2+W_5+W_6$; however, this space only splits in two irreducible components under the enlarged structure group.

\section{$\SL(n,\R)$-structures}
In this section we turn to the structure group $\SL(n,\R)$. The invariant elements of $\Lambda^*T$  under the action $\SL(n,\R)$ are generated by
\[F=e^{1,n+1}+\dots + e^{n,2n}, \quad \alpha=e^{1,\dots, n} , \quad \beta=e^{n+1,\dots, 2n};\]
conversely, $\SL(n,\R)$ is the largest group that fixes this subalgebra.

Given a $\GL(n,\R)$-structure, a reduction to $\SL(n,\R)$ is determined by a global, nowhere-vanishing form of type $(n,0)$. Clearly, a $\GL(n,\R)$-structure admits a reduction to $\SL(n,\R)$ if and only if it admits a reduction to $\GL_+(n,\R)$, i.e. the two rank $n$ distributions are orientable. 

\begin{example}
Consider the standard parahermitian structure on $\R^4=\R^2\times \R^2$. This structure is preserved by the group $\Gamma$ generated by the diffeomorphism
\[(x,y,z,t)\mapsto (x+1,-y,z,-t).\]
Hence the quotient $\R^4/\Gamma$ has an induced parahermitian structure. In this case, the rank two subbundles are not orientable, and there is no global reduction to $\SL(n,\R)$.
\end{example}

\begin{remark}
Given an almost parahermitian structure on $M$, namely a $\GL(n,\R)$-structure $P$, it is always possible to find a $2:1$ cover of $M$ which admits a reduction to $\SL(n,\R)$. Indeed, the quotient $P/\GL_+(n,\R)$ is a $2:1$ cover of $M$ which admits a tautological $\GL_+(n,\R)$-structure.
\end{remark}

\smallskip
The intrinsic torsion of a $\SL(n,\R)$-structure takes values in the cokernel of the alternating map
\[ \partial_{\SL(n,\R)}\colon T^*\otimes \lie{sl}(n,\R)\to \Lambda^2T^*\otimes T;\]
since $\partial_{O(n,n)}$ is an isomorphism, we can identify this space with $T^*\otimes \Sl(n,\R)^\perp$.
\begin{proposition}
The intrinsic torsion of an $\SL(n,\R)$-structure lies in
\[T^*\otimes \Sl(n,\R)^\perp = T^*\otimes \gl(n,\R)^\perp \oplus W^{1,0}\oplus W^{0,1},\]
where \[W^{1,0}=\partial (V^*\otimes \R)\cong V^*, \quad W^{0,1}=\partial (V\otimes \R)\cong V.\]
\end{proposition}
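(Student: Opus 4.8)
The plan is to compute the cokernel of $\partial_{\SL(n,\R)}$ by leveraging the already-established decomposition for $\GL(n,\R)$. First I would note that since $\partial_{O(n,n)}$ is an isomorphism, the intrinsic torsion space $\coker \partial_{\SL(n,\R)}$ is canonically identified with $T^*\otimes \Sl(n,\R)^\perp$, and the orthogonal splitting $\so(n,n)=\gl(n,\R)\oplus\gl(n,\R)^\perp$ combined with $\gl(n,\R)=\Sl(n,\R)\oplus\gl(n,\R)^\perp$ (using $\Sl(n,\R)^\perp=\{A=\lambda I\}\subset\gl(n,\R)$) gives
\[
\Sl(n,\R)^\perp = \gl(n,\R)^\perp \oplus \R\cdot\id,
\]
where $\R\cdot\id$ denotes the trace part $\{A=\lambda I, B=C=0\}$. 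Tensoring with $T^*=V^*\oplus V$ yields
\[
T^*\otimes\Sl(n,\R)^\perp = \bigl(T^*\otimes\gl(n,\R)^\perp\bigr)\oplus \bigl(V^*\otimes\R\bigr)\oplus\bigl(V\otimes\R\bigr),
\]
and the first summand is exactly $\coker\partial_{\GL(n,\R)}\cong W_1+\dots+W_8$ from the previous proposition. So what remains is to understand the images, under $\partial$, of the two extra summands $V^*\otimes\R$ and $V\otimes\R$ inside $\coker\partial_{\SL(n,\R)}$.

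Next I would make this precise. The map $\partial_{\SL(n,\R)}$ is the restriction of the alternating map $\partial\colon T^*\otimes\gl(T)\to\Lambda^2 T^*\otimes T$ to $T^*\otimes\Sl(n,\R)$. Since $\partial_{O(n,n)}$ (the restriction to $T^*\otimes\so(n,n)$) is an isomorphism onto $\Lambda^2T^*\otimes T$, identifying $\coker\partial_{\SL(n,\R)}$ with its orthogonal complement inside $T^*\otimes\so(n,n)$, and then with $T^*\otimes\Sl(n,\R)^\perp$, the intrinsic torsion classes $W^{1,0}$ and $W^{0,1}$ are by definition $\partial(V^*\otimes\R\cdot\id)$ and $\partial(V\otimes\R\cdot\id)$ viewed in this cokernel. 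The content of the statement is then just that these are nonzero and isomorphic to $V^*$ and $V$ respectively as $\SL(n,\R)$-modules. For this I would write $\id = e^i\otimes e_i + e^{n+i}\otimes e_{n+i}\in\gl(T)$ (this is the trace element corresponding to $A=\lambda I$ with $\lambda=1$ in the block form, which indeed lies in $\Sl(n,\R)^\perp\subset\so(n,n)$), and compute directly:
\[
\partial\bigl(e^j\otimes\id\bigr) = e^{ji}\otimes e_i + e^{j,n+i}\otimes e_{n+i},
\]
with an analogous formula for $\partial(e^{n+j}\otimes\id)$. One then checks this is a nonzero element and that, as $j$ varies, these span a submodule isomorphic to $V^*$ (resp. $V$); since $V^*$ and $V$ are irreducible and the map $e^j\mapsto\partial(e^j\otimes\id)$ is manifestly $\SL(n,\R)$-equivariant and nonzero, it is an isomorphism onto its image. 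Finally I would verify that $W^{1,0}$ and $W^{0,1}$ have trivial intersection with $T^*\otimes\gl(n,\R)^\perp$ and with each other inside the cokernel, which follows because $V^*\otimes\R\cdot\id$, $V\otimes\R\cdot\id$ and $T^*\otimes\gl(n,\R)^\perp$ are mutually orthogonal in $T^*\otimes\so(n,n)$, hence their (injective, on these pieces) images under the isomorphism $\partial_{O(n,n)}$ followed by projection are transverse.

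The main obstacle — really the only subtle point — is checking that $\partial$ restricted to $V^*\otimes\R\cdot\id$ and $V\otimes\R\cdot\id$ is \emph{injective} (equivalently, that these trace directions do not already get killed, i.e. do not lie in $\partial_{\SL(n,\R)}(T^*\otimes\Sl(n,\R))$). This amounts to the observation that $\partial$ is injective on all of $T^*\otimes\so(n,n)$ since $\partial_{O(n,n)}$ is an isomorphism, so in fact injectivity is automatic and there is nothing to check beyond the bookkeeping; the "difficulty" is merely in being careful that the extra components are genuinely new relative to the eight $\GL(n,\R)$-classes, which is the transversality remark above. A secondary bit of care is needed to confirm that $\R\cdot\id$ as written sits in $\so(n,n)$ with respect to the neutral metric $g=e^i\odot e^{n+i}$ — i.e. that $A=\lambda I$, $B=C=0$ defines a skew element of $(T,g)$ — which is immediate from the block description of $\so(n,n)$ given before Lemma~\ref{lemma:glglscalar}.
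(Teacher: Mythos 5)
Your overall strategy is the right one, and it is essentially the (unwritten) argument the paper has in mind: since $\partial_{O(n,n)}$ is an isomorphism, the cokernel of $\partial_{\SL(n,\R)}$ is identified with $\partial\bigl(T^*\otimes \Sl(n,\R)^\perp\bigr)$, and one decomposes $\Sl(n,\R)^\perp=\gl(n,\R)^\perp\oplus\R$ inside $\so(n,n)$ and tensors with $T^*=V^*\oplus V$. The paper states this proposition without proof, so your write-up actually supplies more detail than the original. (A minor slip: the line ``$\gl(n,\R)=\Sl(n,\R)\oplus\gl(n,\R)^\perp$'' is garbled --- $\gl(n,\R)^\perp$ is the complement of $\gl(n,\R)$ in $\so(n,n)$, not a subspace of $\gl(n,\R)$ --- but the conclusion $\Sl(n,\R)^\perp=\gl(n,\R)^\perp\oplus\R$ you draw from it is correct.)

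There is, however, one concrete error that breaks a step of your verification. The trivial summand $\R\subset\Sl(n,\R)^\perp$ is \emph{not} spanned by $\id=e^i\otimes e_i+e^{n+i}\otimes e_{n+i}$: that tensor is symmetric with respect to $g=e^i\odot e^{n+i}$, since $g(\id\,v,w)+g(v,\id\,w)=2g(v,w)$, so it does not lie in $\so(n,n)$ at all, and your closing remark that this ``is immediate from the block description'' is false as stated. The element corresponding to $A=\lambda I$, $B=C=0$ in the block form $\left(\begin{smallmatrix} A & B\\ C & -\tran{A}\end{smallmatrix}\right)$ acts by $+\lambda$ on $V$ and by $-\lambda$ on $H$, i.e.\ it is $\lambda K=\lambda(e^i\otimes e_i-e^{n+i}\otimes e_{n+i})$ --- exactly the tensor the paper uses when it writes $\tilde\lambda=\lambda\otimes\left(\begin{smallmatrix} I&0\\0&-I\end{smallmatrix}\right)$ immediately after this proposition. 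The error matters because your transversality argument rests on $V^*\otimes\R\cdot\id$, $V\otimes\R\cdot\id$ and $T^*\otimes\gl(n,\R)^\perp$ being mutually orthogonal subspaces of $T^*\otimes\so(n,n)$, after which injectivity of $\partial_{O(n,n)}$ does the work; with $\id$ in place of $K$ the first two are not subspaces of $T^*\otimes\so(n,n)$, so that argument cannot be invoked. Replacing $\id$ by $K$ throughout repairs everything: $\partial(e^j\otimes K)=e^{ji}\otimes e_i-e^{j,n+i}\otimes e_{n+i}\neq 0$, the map $e^j\mapsto\partial(e^j\otimes K)$ is $\SL(n,\R)$-equivariant and hence injective on the irreducible module $V^*$ (similarly for $V$), and the three summands of $T^*\otimes\Sl(n,\R)^\perp$ have complementary images in $\Lambda^2T^*\otimes T$ precisely because $\partial$ restricted to $T^*\otimes\so(n,n)$ is an isomorphism.
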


The Levi-Civita connection can always be written in the form
\begin{equation}
\label{eqn:omegataulambda} 
\omega^{LC}=\omega+\tau+\tilde\lambda,
\end{equation}
with $\tau \in T^*\otimes \lie{gl}(n,\R)^\perp$, $\omega$ is an almost-parahermitian connection and
\[\tilde\lambda=\lambda\otimes \begin{pmatrix} I & 0 \\ 0 & -I\end{pmatrix},\]
where $\lambda$ is a one-form. For future reference, we note that given a form $\sigma$,
\begin{equation}
\label{eqn:bracketlambda}
\partial(\tilde\lambda)\hook \sigma=\sum_{p,q} (p-q)\lambda\wedge \sigma^{p,q}.
\end{equation}

Thus, $\omega$ is the connection obtained from the Levi-Civita connection by projection on $\Sl(n,\R)$; we shall refer to it as the \emph{minimal} connection, and denote by $\nabla, D$ the corresponding covariant derivative and exterior covariant derivative. By construction, the torsion of $\omega$ is $\Theta=-\partial(\tau+\tilde\lambda)$. The component  $\tau$ can be decomposed as the sum of
\[\tau=\tau_1+\dots + \tau_8,\]
with each $\tau_i$ corresponding to a section of the bundle associated to $W_i$. Relative to the action of $\R^*\subset\GL(n,\R)$, we can decompose $\tau$ into four components with weights $-3,1,3,-1$, namely
\begin{equation}
 \label{eqn:tauindices}
\begin{gathered}
\tau_1+\tau_2= a_{ijk}e^i\otimes e^{jk}\in V^*\otimes \Lambda^2V^*,\quad
 \tau_3+\tau_4= c_{ijk}e^i\otimes e_{jk}\in V^*\otimes \Lambda^2V,\\
 \tau_5+\tau_6= b_{ijk}e^{n+i}\otimes e_{jk}\in V\otimes \Lambda^2V,\quad
 \tau_7+\tau_8= d_{ijk}e^{n+i}\otimes e^{jk}\in V\otimes \Lambda^2V^*.
\end{gathered}
\end{equation}
Here, summation over all $i,j,k$ is implied, and we assume that $a_{ijk}=-a_{ikj}$.

The components $W_4$, $W_8$, $W^{1,0}$ and $W^{0,1}$ can be encoded in three one-forms
\begin{equation}
 \label{eqn:pag7alto}
f_4 = a_ie^{n+i},  \quad  f_8=b_ie^{i}, \quad \lambda=\lambda_I e^I 
\end{equation}
characterized by
\begin{equation}
 \label{eqn:tau4tau8lambda}
\tau_4 = a_i(e^k\otimes e_{ki}),\quad \tau_8=b_i(e^{n+k}\otimes e^{ki}), \quad \tilde\lambda = \lambda_I e^I\otimes (e^k\otimes e_k - e^{n+k}\otimes e_{n+k}).
\end{equation}

A useful symmetry arises as follows. Given an $\SL(n,\R)$-structure $P$, one can consider the  $\SL(n,\R)$-structure $P\sigma$, where  $\sigma=\left(\begin{smallmatrix}  0 & I \\ I & 0 \end{smallmatrix}\right);$
this amounts to  interchanging $V$ and $H$. An adapted coframe $e^1,\dots, e^{2n}$ for $P$ determines an adapted coframe
\[e_\sigma^1,\dots, e_\sigma^{2n} = e^{n+1},\dots, e^{2n},e^1,\dots, e^n,\]
relative to which the intrinsic torsion has the form
\begin{gather*}
(a_\sigma)_{ijk} = b_{ijk},\quad(b_\sigma)_{ijk} = a_{ijk},\quad (c_\sigma)_{ijk} = d_{ijk},\quad (d_\sigma)_{ijk} = c_{ijk},\\
(\lambda_\sigma)_i = -\lambda_{n+i}, \quad (\lambda_\sigma)_{n+i}=-\lambda_i; 
\end{gather*}
the minus sign originates from the action of $\sigma$ on $\left(\begin{smallmatrix} I & 0 \\ 0 & - I\end{smallmatrix}\right)$.

There are constraints on $\tau$ coming from the first Bianchi identity. Indeed, recall that given a tensorial $k$-form $\eta$, one has
\begin{equation}
 \label{eqn:exteriorcovariant}
D\eta = \mathfrak{a}(\nabla\eta)+\Theta\hook\eta,
\end{equation}
where $\Theta$ is the torsion and 
$\mathfrak{a}(\nabla\eta)=\langle \nabla\eta, \frac1{k!}\theta\wedge \dots \wedge \theta \rangle .$
In particular,
\[D\Theta = D(-\partial(\tau+\tilde\lambda))=\mathfrak{a}(-\nabla\partial(\tau+\tilde\lambda))+\partial(\tau+\tilde\lambda)\hook \partial(\tau+\tilde\lambda)\]
must satisfy
\[D\Theta=\Omega\wedge\theta\in \Lambda^2T^*\otimes \Sl(n,\R) \subset\Lambda^3T^*\otimes T.\]
Define the equivariant maps
\begin{align*}
p&\colon\Lambda^3T^*\otimes T\to \Lambda^{1,1}_0, & \eta\otimes v&\mapsto [v\hook \eta +(n-1) \Lambda\eta\wedge v\hook F]_{\Lambda^{1,1}_0};\\
q&\colon\Lambda^3T^*\otimes T\to \R, & \eta\otimes v&\mapsto \Lambda( v\hook \eta).
\end{align*}
\begin{proposition}
The intrinsic torsion of an $\SL(n,\R)$-structure satisfies
\label{prop:constraint}
\begin{multline*}
 p\bigl(\mathfrak{a}(-\nabla\partial(\tau_3+\tau_4+\tau_7+\tau_8))+ \partial(\tau_1)\hook \partial(\tau_6)
+ \partial(\tau_2)\hook \partial(\tau_5)
+ \partial(\tau_6)\hook \partial(\tau_1)\\
+ \partial(\tau_5)\hook \partial(\tau_2)
+\partial(\tau_3+\tau_4+\tau_7+\tau_8)\hook \partial(\tilde\lambda )\bigr)
=0
\end{multline*}
\begin{equation*}
q(\mathfrak{a}(-\nabla\partial(\tau_4+\tau_8))+\partial(\tau_4+\tau_8)\hook \partial(\tilde\lambda )\bigr)=0.
\end{equation*}
\end{proposition}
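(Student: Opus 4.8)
The plan is to read both identities off the first Bianchi identity for the minimal connection $\omega$. As already noted, $D\Theta=\Omega\wedge\theta$, and since $\omega$ is an $\SL(n,\R)$-connection the curvature $\Omega$ takes values in $\lie{sl}(n,\R)$; thus $D\Theta$ lies in the image of the $\GL(n,\R)$-equivariant map $R\mapsto R\wedge\theta$, $\Lambda^2T^*\otimes\lie{sl}(n,\R)\to\Lambda^3T^*\otimes T$. The first thing to verify is that $p$ and $q$ annihilate this image. Both are assembled from natural contractions together with the operator $\Lambda$ of \eqref{eqn:LambdaOn21}, hence are well-defined and $\GL(n,\R)$-equivariant on all of $\Lambda^3T^*\otimes T$; and evaluating $p(R\wedge\theta)$ and $q(R\wedge\theta)$ for $R$ valued in $\lie{sl}(n,\R)$ is a short computation whose point is that $R(X,Y)$ is trace-free and block-diagonal for $T=V\oplus V^*$. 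Trace-freeness annihilates the middle term of the cyclic sum defining $R\wedge\theta$ under every contraction, block-diagonality kills the mixed contractions between $V$ and $V^*$, and the (automatic) skew-symmetry of $R$ in its first two arguments makes the $F$-traces of the remaining ``Ricci-type'' contractions cancel; this gives $q(R\wedge\theta)=0$ directly and, after a parallel computation for the $\Lambda$-term, $p(R\wedge\theta)=0$. Hence $p(D\Theta)=0=q(D\Theta)$.

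Next I would expand $D\Theta$ by \eqref{eqn:exteriorcovariant}, writing
\[D\Theta=\mathfrak a\bigl(-\nabla\partial(\tau+\tilde\lambda)\bigr)+\partial(\tau+\tilde\lambda)\hook\partial(\tau+\tilde\lambda),\]
substitute $\tau=\tau_1+\dots+\tau_8$, break the quadratic term into the contributions $\partial(\tau_i)\hook\partial(\tau_j)$, $\partial(\tau_i)\hook\partial(\tilde\lambda)$, $\partial(\tilde\lambda)\hook\partial(\tau_i)$ and $\partial(\tilde\lambda)\hook\partial(\tilde\lambda)$, and then apply $p$ and $q$ and discard whatever must vanish. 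Two cheap filters do most of the pruning. First, grading by the centre $\R^*\subset\GL(n,\R)$: the targets $\Lambda^{1,1}_0$ and $\R$ have weight $0$, while $\tau_1,\tau_2$ have weight $-3$, $\tau_5,\tau_6$ weight $+3$, $\tau_3,\tau_4$ weight $+1$, $\tau_7,\tau_8$ weight $-1$, and $\tilde\lambda$ (hence every covariant-derivative direction) has weight $\pm1$; this already forces $\mathfrak a(-\nabla\partial\tau_i)$ to survive only for $i\in\{3,4,7,8\}$ and restricts the surviving quadratic terms to weight-$0$ combinations. Second, Schur's lemma together with the Littlewood--Richardson decompositions of the modules $W_i\otimes W_j$ and $W_i\otimes(V^*\oplus V)$: this discards every term whose image cannot meet a copy of the target module — for $q$, for example, it removes the $\tau_3$ and $\tau_7$ derivative terms, since $W_3$ and $W_7$ are neither $V$ nor $V^*$.

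These filters are not the end of the matter, and the genuine obstacle is the residual cancellation among the terms that pass them. Several weight-$0$ products whose modules do contain the target still sum to zero: the cleanest instance is that $\partial(\tau_1)\hook\partial(\tau_5)$ and $\partial(\tau_5)\hook\partial(\tau_1)$ — each nonzero on its own after applying $p$ — have opposite $p$-images and cancel, the relative sign originating from the skew-symmetry of $F$ in the term $v\hook F$ of $p$; the same occurs for $(\tau_2,\tau_6)$ and for the relevant opposite-order pairs among $\{\tau_3,\tau_4,\tau_7,\tau_8\}$, while the contributions $\partial(\tilde\lambda)\hook\partial(\tilde\lambda)$ and $\mathfrak a(-\nabla\partial\tilde\lambda)$ and the reversed products $\partial(\tilde\lambda)\hook\partial(\tau_i)$ drop out by a similar mechanism, using the explicit form of $\partial\tilde\lambda$ furnished by \eqref{eqn:bracketlambda}. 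To see all of this one has to carry out the contractions explicitly, with the descriptions of $\partial$ on the $W_i$ implicit in \eqref{eqn:tauindices}--\eqref{eqn:tau4tau8lambda} and the identification \eqref{eqn:lambdaisso}; after these finitely many calculations exactly the combinations displayed in the two identities remain, and one checks they do not collapse any further. So the real work here is bookkeeping: keeping track of which of the many bilinear contractions survive all the filters, and confirming that what survives is genuinely nonzero.
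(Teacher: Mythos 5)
Your proposal follows essentially the same route as the paper's proof: show that $p$ and $q$ annihilate $\Omega\wedge\theta$ (the paper does this by reducing to the $\Lambda^{1,1}\otimes\gl(n,\R)$ part and a direct computation), expand $D\Theta$ via \eqref{eqn:exteriorcovariant}, prune by $\GL(n,\R)$-equivariance and Schur's lemma, and then verify by explicit contraction that the symmetric sums for $(\tau_1,\tau_5)$, $(\tau_2,\tau_6)$, $(\tau_3+\tau_4,\tau_7+\tau_8)$ and the $\tilde\lambda$-terms cancel, leaving exactly the stated combinations. The bookkeeping you defer to computation is the same bookkeeping the paper carries out, so the argument is correct and not materially different.
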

\begin{proof}
As a first step, we prove that $p$ and $q$ kill $\Omega\wedge\theta$. Decomposing $\Lambda^3T^*$ under $\GL(n,\R)$, one readily sees that $p$ and $q$ are only non-trivial on $\Lambda^{2,1}\otimes V+\Lambda^{1,2}\otimes V^*$. Writing $\Omega=\Omega^{2,0}+\Omega^{1,1}+\Omega^{0,2}$, it follows that
\[p(\Omega\wedge\theta)=p(\Omega^{1,1}\wedge\theta).\]
By linearity, we can assume that $\Omega^{1,1}$ has the form $e^{i,n+j}\otimes (e^k\otimes e_{h}-e^{n+h}\otimes e_{n+k})$; using \eqref{eqn:LambdaOn21}, we obtain
\begin{multline*}
p(\Omega^{1,1}\wedge\theta)=p( e^{i,n+j,k}\otimes e_h - e^{i,n+j,n+h}\otimes e_{n+k})\\
=[\delta_{ih} e^{n+j,k} +e^{i,n+h}\delta_{kj}+e_j\hook e^{i,k,n+h} +e^{n+i}\hook e^{n+j,n+h,k}]_{\Lambda^{1,1}_0}=0.
\end{multline*}
The Bianchi identity now implies
\begin{equation*}
0=p(\Omega\wedge\theta)=p(\mathfrak{a}(-\nabla\partial(\tau+\tilde\lambda))+\partial(\tau+\tilde\lambda)\hook \partial(\tau+\tilde\lambda)),
\end{equation*}
and the same holds for $q$.

From $\lambda\wedge\lambda=0$, we obtain $\partial(\tilde\lambda)\hook\partial(\tilde\lambda)=0$; the component
$\partial(\tilde\lambda)\hook\partial(\tau)$
gives no contribution because of Schur's lemma and \eqref{eqn:bracketlambda}. Similarly,  $\nabla(\partial(\tau_1+\tau_2+\tau_5+\tau_6))$ is in the kernel of $p$ and $q$ because it has no component in $\Lambda^{2,1}\otimes V+\Lambda^{1,2}\otimes V^*$.

Observe that
\[\mathfrak{a}(\nabla\partial\tilde\lambda) = d\lambda\wedge e^k\otimes e_k - d\lambda\wedge e^{n+k}\otimes e_{n+k},\]
hence,  using \eqref{eqn:LambdaOn21} again, 
\begin{multline*}
p(\mathfrak{a}(\nabla\partial\tilde\lambda)= \bigl[e_k\hook (d\lambda\wedge e^k) - e_{n+k}\hook (d\lambda\wedge e^{n+k}) \\
+(n-1)\Lambda(d\lambda \wedge e^k)\wedge e^{n+k}+(n-1) \Lambda(d\lambda\wedge e^{n+k})\wedge e^k)\bigr]_{\Lambda^{1,1}_0}\\
=\bigl[e_k\hook (d\lambda\wedge e^k) - e_{n+k}\hook (d\lambda\wedge e^{n+k})
-e_k\hook (d\lambda\wedge e^k)+e_{n+k}\hook (d\lambda\wedge e^{n+k})\bigr]_{\Lambda^{1,1}_0}=0;
\end{multline*}
a similar calculation shows that $q(\mathfrak{a}(\nabla\partial\tilde\lambda)=0$.

Writing 
\begin{multline*}
\partial(\tau_3+\tau_4)\hook \partial(\tau_7+\tau_8)= 4c_{ijk}e^{i,n+j}\otimes e_{k} \hook (d_{hlm}e^{n+h,l}\otimes e_{n+m})\\
=-4c_{ijk}d_{hkm}e^{i,n+j, n+h}\otimes e_{n+m},
\end{multline*}
and symmetrically
$\partial(\tau_7+\tau_8)\hook \partial(\tau_3+\tau_4)= -4d_{ijk} c_{hkm}e^{n+i,j,h}\otimes e_m,$
we find that
\begin{multline*}
p(\partial(\tau_7+\tau_8)\hook \partial(\tau_3+\tau_4) + \partial(\tau_7+\tau_8)\hook \partial(\tau_3+\tau_4))\\
= \bigl[4c_{imk}d_{hkm}e^{i, n+h} -4c_{ijk}d_{hkh}e^{i,n+j} + 4c_{iik}d_{hkm}e^{ n+h,m}-   4c_{ijk}d_{ikm}e^{n+j,m}\\
+ 4d_{imk}c_{hkm}e^{n+i, h} -4d_{ijk}c_{hkh}e^{n+i,j} + 4d_{iik}c_{hkm}e^{h,n+m}-   4d_{ijk}c_{ikm}e^{j,n+m}\bigr]_{\Lambda^{1,1}_0}
\end{multline*}
is zero, and the same for $q$. 

Explicit computations shows that $p$ and $q$ annihilate $\partial(\tau_2)\hook \partial(\tau_6)\allowbreak+\allowbreak\partial(\tau_6)\hook \partial(\tau_2)$ and
$\partial(\tau_1)\hook \partial(\tau_5)+\partial(\tau_5)\hook \partial(\tau_1)$.

Finally, observe that  $\nabla\partial\tau_i$ lies in a module isomorphic to $T^*\otimes W^i$ which only contains a component isomorphic to $V^1_1$ for $i=3,4,7,8$, and to $\R$ for $i=4,8$. Similarly, $\partial(\tau_i)\hook\partial(\tau_j)$ lies in a module isomorphic to $W^i\otimes W^j$; the equivariance of $p$ and $q$ gives the statement.
\end{proof}

\begin{remark}
The component $\tau$ of the intrinsic torsion depends only on the $\GL(n,\R)$-struc\-ture; the component $\lambda$ only depends on the $\SL(n,\R)\times \SL(n,\R)$-structure. The components $\tau_1,\tau_2,\tau_5,\tau_6$ depend on the paracomplex structure (see Proposition~\ref{prop:paracomplexit}); more precisely, they are determined by
\[(de^i)^{0,2}, \quad (de^{n+i})^{2,0}.\]
\end{remark}

\section{Ricci curvature}
In this section we give a formula for the Ricci tensor of an $\SL(n,\R)$-structure, expressed in terms of its intrinsic torsion. Even though the structure group is $\SL(n,\R)$, all relevant representations have a natural action of $\GL(n,\R)$, and the maps we consider in this section are $\GL(n,\R)$-invariant; accordingly, we will regard two representations as isomorphic if they are under this larger group.

The basic idea  is that, relative to the decomposition
\[\Lambda^2T^*\otimes \so(n,n)= \Lambda^2T^*\otimes (\Sl(n,\R)\oplus \gl(n,\R)^\perp\oplus \R),\]
the relevant part of the curvature of the Levi-Civita connection is determined by the last two components, which only depend on the intrinsic torsion. Indeed, writing the Levi-Civita connection form as in \eqref{eqn:omegataulambda}, its curvature decomposes as
\[\Omega^{LC}=(\Omega+\frac12[[\tau,\tau]]_{\Sl(n,\R)}) + (D\tau + [\tilde\lambda,\tau] )+ (d\lambda-\frac1n F(\tau,\tau)) \otimes \begin{pmatrix} I  & 0 \\ 0 & -I\end{pmatrix},\]
where we have used Lemma \ref{lemma:glglscalar}. Here $D$ denotes the exterior covariant derivative of the minimal connection, and $F(\tau,\tau)$ denotes a $2$-form obtained by contracting $\tau$ with itself using $F$. More precisely, let $\langle\cdot,\cdot\rangle$ denote the natural pairing between $V$ and $V^*$, and consider the skew bilinear form on $\Lambda^2V+\Lambda^2V^*$ such that
\[\tilde F(\gamma,\sigma)=\langle \gamma,\sigma
\rangle=-\tilde F  (\sigma,\gamma), \gamma\in\Lambda^2V, \sigma\in\Lambda^2V^*,\]
and zero otherwise,  and define \[F(\eta\otimes \gamma, \eta'\otimes \gamma')=\tilde F(\gamma,\gamma') \eta\wedge \eta'.\]
Thus, for fixed $i,h,j\neq k$,
\[F(e^i\otimes e_{jk}, e^{n+h}\otimes e_{n+j,n+k}) = e^i\wedge e^{n+h}.\]
We shall also consider the $2$-form $\overline{F}(\tau,\tau)$ defined by
\[\overline{F}(e^I\otimes e^J\otimes e^K,e^H\otimes e^L \otimes e^M)=-F(e^I,e^H) F(e^J,e^L) e^K\wedge e^M.\]

We shall decompose the Ricci tensor of the Levi-Civita connection as
\[\ric=\ric'+\ric'',\quad \ric'\in V^*\otimes V, \quad \ric''\in S^2V\oplus S^2V^*.\]
Here, $V^*\otimes V$ represents a subspace of $S^2T^*$, i.e. $e^{i}\otimes e^{n+j}$ stands for $e^{i}\odot e^{n+j}$. We will also identify this space with $\Lambda^{1,1}$ through
\begin{equation}
\label{eqn:identify11}
V^*\otimes V\cong \Lambda^{1,1}, \quad e^i\otimes e^{n+j}\mapsto e^{i,n+j}.
\end{equation}

As a first approximation, the $V^*\otimes V$ part of the Ricci can be described as follows.
\begin{lemma}
The Ricci tensor of an $\SL(n,\R)$-structure satisfies
\label{lemma:ricprime}
\[\ric' = 2\ric' (D\tau+[\tilde\lambda, \tau])+n(d\lambda)^{1,1}-F( \tau,\tau)^{1,1}.\]
\end{lemma}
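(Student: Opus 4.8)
The plan is to extract the $V^*\otimes V$-component of the Ricci tensor directly from the decomposition of $\Omega^{LC}$ given just above the statement, using the general fact that the Ricci tensor of any metric connection is obtained from its curvature by contracting $\Omega^{LC}\in\Lambda^2T^*\otimes\so(n,n)$ via the map $\Lambda^2T^*\otimes\so(n,n)\to T^*\otimes T^*$, $\eta\otimes M_\alpha\mapsto$ (a suitable trace/contraction). Concretely, $\ric(X,Y)=\tr\bigl(Z\mapsto R(Z,X)Y\bigr)$, and one splits this according to $\so(n,n)=\Sl(n,\R)\oplus\gl(n,\R)^\perp\oplus\R$. The point is that the $\Sl(n,\R)$-valued piece $\Omega+\tfrac12[[\tau,\tau]]_{\Sl(n,\R)}$ contributes to $\ric''\in S^2V\oplus S^2V^*$ but not to $\ric'$ — or rather, whatever part of $\ric$ it does feed into $V^*\otimes V$ must be accounted for; I would first argue, via a type/representation-theoretic bookkeeping (the curvature of the minimal connection $\Omega$ has its relevant contraction landing in the same place where the intrinsic torsion quadratic terms land), why the $V^*\otimes V$ contribution of the first summand is absorbed into the $2\ric'(D\tau+[\tilde\lambda,\tau])$ term together with the $F(\tau,\tau)^{1,1}$ correction.

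First I would write down explicitly the contraction map on each of the three summands. For the central summand $D\tau+[\tilde\lambda,\tau]\in\Lambda^2T^*\otimes\gl(n,\R)^\perp$, the contraction gives exactly $\ric'$ applied to it — here I introduce/recall the notation $\ric'(\cdot)$ as the $V^*\otimes V$-valued contraction map on $\Lambda^2T^*\otimes\so(n,n)$, so this term contributes $\ric'(D\tau+[\tilde\lambda,\tau])$ literally; the factor $2$ will come from the fact that a symmetric $2$-tensor built by contracting $\Lambda^2T^*\otimes(\text{endomorphisms})$ picks up the endomorphism both ``lowered'' and via its transpose, doubling the $\gl(n,\R)^\perp$ (symmetric-type) contribution. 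For the scalar summand $(d\lambda-\tfrac1n F(\tau,\tau))\otimes\operatorname{diag}(I,-I)$, the contraction of $\eta\otimes\operatorname{diag}(I,-I)$ with the metric $g=e^i\odot e^{n+i}$ produces a multiple of $\eta^{1,1}$: the endomorphism $\operatorname{diag}(I,-I)$ acts as $+1$ on $V$ and $-1$ on $H$, and tracing against the pairing $V\otimes V^*$ yields the coefficient $n$ (the rank), giving $n(d\lambda)^{1,1}-F(\tau,\tau)^{1,1}$ after distributing. For the first summand I would invoke Schur/type arguments: $\Omega$ is $\su$-type valued and its $V^*\otimes V$-contraction is precisely (part of) what the left-hand side $\ric'$ denotes, so collecting everything and moving the $\Omega$-contribution to the left produces the stated fixed-point-style identity $\ric'=2\ric'(D\tau+[\tilde\lambda,\tau])+n(d\lambda)^{1,1}-F(\tau,\tau)^{1,1}$.

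The main obstacle is pinning down the normalization constants and signs — in particular getting the factor $2$ in front of $\ric'(D\tau+[\tilde\lambda,\tau])$ and the factor $n$ in front of $(d\lambda)^{1,1}$ correct, and verifying that the $-\tfrac1n F(\tau,\tau)$ inside the scalar summand, once multiplied by the contraction constant $n$, yields exactly $-F(\tau,\tau)^{1,1}$ with the sign conventions fixed by \eqref{eqn:lambdaisso} and \eqref{eqn:identify11}. This requires carefully tracking the identifications: $\Lambda^2T^*\cong\so(n,n)$ via $M_\alpha$, the embedding $V^*\otimes V\hookrightarrow S^2T^*$ as $e^i\odot e^{n+j}$, and the curvature-to-Ricci contraction, all at once. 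A secondary, more conceptual point to justify cleanly is that the first summand $\Omega+\tfrac12[[\tau,\tau]]_{\Sl(n,\R)}$ genuinely does not produce extra uncontrolled $V^*\otimes V$ terms beyond those packaged into the definition of $\ric'$ on the left; I expect this to follow because $\ric'$ on the left is \emph{defined} as the full $V^*\otimes V$-part of the Ricci tensor, so the identity is really ``self-referential'' and the content is the explicit appearance of the last three terms — but I would make sure the statement is read that way and that the equation is consistent (e.g.\ by checking it reduces correctly in the parak\"ahler case $\tau=0$, $\lambda=0$, where it forces $\ric'=0$ as it must since holonomy in $\SL(n,\R)$ gives Ricci-flatness).
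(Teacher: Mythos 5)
There is a genuine gap, and it sits exactly where you flagged your own uncertainty: the treatment of the first summand $\Omega+\tfrac12[[\tau,\tau]]_{\Sl(n,\R)}$. It is not true that the $\Sl(n,\R)$-valued part of the curvature fails to contribute to $\ric'$, nor can its contribution be ``absorbed'' by a Schur-type argument applied to the three summands separately: for instance, $S^2(\Sl(n,\R))\subset S^2(\Lambda^2T^*)$ contains a copy of $V_1^1$ with nonzero Ricci contraction, so a naive contraction of the decomposition of $\Omega^{LC}$ leaves an uncontrolled term coming from the minimal connection's curvature. The actual mechanism in the paper is the symmetry of the Riemann tensor: $R$ lies in $\mathcal{R}=\ker\bigl(S^2(\Lambda^2T^*)\to\Lambda^4T^*\bigr)$, and while $S^2(\Lambda^2T^*)$ contains three copies each of $V_1^1$ and of $\R$, the subspace $\mathcal{R}$ contains only two of each, spanned by explicit combinations ($v_1+v_2$, $v_1+v_3$ and $w_1+2w_2$, $w_2+w_3$ in the paper's notation). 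This constraint ties the invisible $S^2(\Sl(n,\R))$-components of $R$ to the components visible in the projections $\pi_{\gl^\perp}(R)=D\tau+[\tilde\lambda,\tau]$ and $\pi_\R(R)=(d\lambda-\tfrac1nF(\tau,\tau))\otimes\operatorname{diag}(I,-I)$, and one then computes the Ricci contraction of each generator and solves a small linear system to find that precisely the combination $2\ric'(\pi_{\gl^\perp}(R))+n\ric'(\pi_\R(R))$ reproduces $\ric'(R)$. Your proposal never invokes the Bianchi/symmetry constraint, so it has no way to control the $\Sl(n,\R)$-part and no way to derive the coefficients.

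Relatedly, your heuristics for the normalizations are wrong. The factor $2$ is not a transpose-doubling in the contraction, and the factor $n$ is not the trace of $\operatorname{diag}(I,-I)$ against the metric: the paper shows $\ric'\bigl(\eta\otimes\operatorname{diag}(I,-I)\bigr)=\eta^{1,1}$ with coefficient $1$, and both the $2$ and the $n$ arise only as the solution of the linear system in the coefficients $(a,b,h,k)$ parametrizing the relevant components of $R$ in $\mathcal{R}$. Your consistency check in the parak\"ahler case is a reasonable sanity test but cannot substitute for this computation, since it only tests the identity on a measure-zero slice of the possible curvature tensors.
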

\begin{proof}
The Riemann tensor takes values in the kernel $\mathcal{R}$ of the skewing map
\begin{equation*}
S^2(\Lambda^2T^*)\to\Lambda^4T^*.
\end{equation*}
As a $\GL(n,\R)$-module, $S^2(\Lambda^2T^*)$ decomposes as
\[  S^2(\Lambda^2T^*)=3\R +3V_1^1+2V_{1,1}^{1,1}  + V_2^2+U+U^*,\]
where $U=2\Lambda^2V+\Lambda^4V+V_{2,2}+V_{1,1,1}^1  + V_{2,1}^1 + S^2V$. Since the Ricci contraction is equivariant, and $\ric'$ takes values in $V^*\otimes V=V_1^1\oplus\R$, we only need to consider the components of $\mathcal{R}$ isomorphic to $V_1^1$ and $\R$.

The three components of $S^2(\Lambda^2T^*)$ isomorphic to $V^1_1$ contain the highest weight vectors 
\[
 v_1=
 e^{n,k+n}\odot  e^{k,n+1},\quad  v_2=e^{n,n+1}\odot e^{k,n+k}, \quad  v_3=e^{n+1,k+n}\odot e^{kn},
\]
where $nv_1-2v_2$ lies in $S^2(\Sl(n,\R))$, $v_2$ in $\Sl(n,\R)^\perp\otimes \Sl(n,\R)$ and $v_3$ in $ S^2(\Sl(n,\R)^\perp)$.
However $\mathcal{R}$ only contains two copies of $V_1^1$, generated by $v_1+v_2, v_1+v_3.$

Similarly, the components isomorphic to $\R$ in $S^2(\Lambda^2T^*)$ are generated by 
\begin{gather*}
w_1= e^{n+i,n+j}\otimes e^{ij}+ e^{ij}\otimes e^{n+i,n+j}\in S^2(\gl(n,\R)^\perp),\\
w_2= e^{i,n+i}\otimes e^{j,n+j}\in S^2(\R)
\end{gather*}
and
\[w_3-\frac1nw_2\in S^2(\Sl(n,\R)), \quad w_3=\sum_{i,j} e^{j,n+i}\otimes e^{i,n+j}.\]
The vectors $w_1+2w_2$, $w_2+w_3$ generate the two copies of $\R$ in $\mathcal{R}$. 

By equivariance, and neglecting components not isomorphic to $V_1^1$ and $\R$, which do not contribute to $\ric'$, we may assume that the Riemann tensor has the form
\[R=a(v_1+v_2)+b(v_1+v_3)+h(w_1+2w_2) + k(w_2+w_3).\]
The Ricci contraction of the fixed generators is given by
\begin{gather*}
 \ric(w_1)=-2(n-1)g, \quad \ric(w_2)=g, \quad \ric(w_3)=ng,\\
\ric(v_1)=ne^n\odot e^{n+1}, \quad \ric(v_2)=2e^n\odot e^{n+1},\quad \ric(v_3)=(n-2)e^n\odot e^{n+1};
\end{gather*}
with our choice of $R$, the Ricci tensor is
\begin{equation}
 \label{eqn:ricprime}
\ric' = (a(n+2)+b(2n-2))e^n\odot e^{n+1}+(2(2-n)h+(1+n)k)g.
\end{equation}
Consider the projections
\begin{gather*}
\pi_{\gl^\perp}\colon \mathcal{R}\to \Lambda^2T^*\otimes \gl(n,\R)^\perp,\qquad
\pi_{\Sl}\colon \mathcal{R}\to \Lambda^2T^*\otimes \Sl(n,\R),\\
\pi_{\R}\colon \mathcal{R}\to \Lambda^2T^*\otimes \R.
\end{gather*}
Since both the image of $\pi_{\gl^\perp}$ and the image of $\pi_\R$ contain $V_1^1\oplus\R$, it is possible to recover $\ric'$ by only considering these projections. Explicitly, we have 
\begin{equation}
  \label{eqn:projcurv}
D\tau + [\tilde\lambda,\tau]=\pi_{\gl^\perp}(R), \quad (d\lambda-\frac1n F(\tau,\tau)) \otimes \begin{pmatrix} I  & 0 \\ 0 & -I\end{pmatrix}= \pi_{\R}(R).
\end{equation}
Then
\begin{gather*}
\pi_{\gl^\perp}(R)=bv_3+hw_1, \\
\pi_\R(R)=(2h+k(1+\frac1n))w_2+(a+\frac2n(a+b))e^{n,n+1}\otimes (e^k\otimes e_k - e^{n+k}\otimes e_{n+k}). 
\end{gather*}
Thus \eqref{eqn:projcurv} gives
\begin{gather*}
\ric'(D\tau + [\tilde \lambda,\tau]) = b(n-2)e^n\odot e^{n+1} -2h(n-1)g,\\
\ric'((d\lambda-\frac1n F(\tau,\tau))\! \otimes\! \begin{pmatrix} I  & 0 \\ 0 & -I\end{pmatrix})=(a\!+\!\frac2n(a\!+\!b))e^n\!\odot\! e^{n+1}+(2h\!+\!k(1\!+\!\frac1n))g.
\end{gather*}

It is now straightforward to verify that the linear combination
\[\ric'=2\ric'(D\tau + [\tilde \lambda,\tau]) +n\ric'(D\tilde\lambda+\frac12[[\tau,\tau]]_\R)\]
is consistent with \eqref{eqn:ricprime}; the statement follows observing that for any two-form $\eta$, through the identification \eqref{eqn:identify11},
\[\ric'\biggl(\eta\otimes\begin{pmatrix} I  & 0 \\ 0 & -I\end{pmatrix}\biggr)=\eta^{1,1}.\qedhere\]
\end{proof}

It turns out that the explicit dependence on the minimal connection (i.e. the term $D\tau$) can be partly eliminated from the formula:
\begin{theorem}
\label{thm:ricci}
The Ricci tensor of an $\SL(n,\R)$-structure satisfies
\begin{equation*}
\begin{split}
\ric'&=2\ric'( \mathfrak{a}(\nabla\tau_3)+\lambda^{1,0}\wedge \tau_3) + 2(n-2)df_4^{1,1}+n(d\lambda)^{1,1}-2(n-1) f_4\wedge f_8\\
+&( 2n\Lambda(df_4)  -4(n-1)\langle f_4,f_8\rangle ) F-10F(\tau_1,\tau_5)+2F(\tau_1,\tau_6)-4F(\tau_2,\tau_5)\\
-&2F(\tau_2,\tau_6)+2\overline F(\tau_2,\tau_6)-2F(\tau_3,\tau_7+\tau_8)-2(n-1)F(\tau_4,\tau_7).
\end{split}
\end{equation*}
\end{theorem}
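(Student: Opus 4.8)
The plan is to start from Lemma~\ref{lemma:ricprime}, which already expresses $\ric'$ in terms of $D\tau+[\tilde\lambda,\tau]$, the $(1,1)$-form $(d\lambda)^{1,1}$, and the algebraic contraction $F(\tau,\tau)^{1,1}$, and to rewrite each of these three pieces so that the only occurrence of the minimal-connection derivative $\nabla$ is the term $\mathfrak{a}(\nabla\tau_3)$. The main work is the analysis of $\ric'(D\tau+[\tilde\lambda,\tau])$. Using \eqref{eqn:exteriorcovariant} I would write $D\tau=\mathfrak{a}(\nabla\tau)+\Theta\hook\tau$ componentwise, $\tau=\tau_1+\dots+\tau_8$, and then apply the $\GL(n,\R)$-equivariant projection $\ric'$. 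By Schur's lemma, $\ric'$ kills any component of $\mathfrak{a}(\nabla\tau_i)$ that does not contain a copy of $V_1^1$ or of $\R$; decomposing $T^*\otimes W_i$ (equivalently $T^*\otimes T^*\otimes W_i$ before alternation) one checks exactly which $\tau_i$ survive. The one-forms $\tau_4,\tau_8$ and $\lambda$ contribute the ``exact'' terms $df_4$, $df_8$, $d\lambda$ and their $\Lambda$-traces; here I would use \eqref{eqn:tau4tau8lambda}, \eqref{eqn:bracketlambda} and \eqref{eqn:LambdaOn21}, together with the identity $\mathfrak{a}(\nabla\tilde\lambda)=d\lambda\wedge e^k\otimes e_k - d\lambda\wedge e^{n+k}\otimes e_{n+k}$ from the proof of Proposition~\ref{prop:constraint}. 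The genuinely nonremovable piece is $\mathfrak{a}(\nabla\tau_3)+\lambda^{1,0}\wedge\tau_3$: the combination with $\lambda^{1,0}\wedge\tau_3$ is forced because $[\tilde\lambda,\tau_3]$ contributes a $\lambda\wedge\tau_3$-type term via \eqref{eqn:bracketlambda}, and only $W_3\cong V_{1,1}^1$ among the ``non-form'' classes maps onto $V_1^1$ under the relevant projection while also failing to be a total derivative of an invariantly defined form.

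Next I would handle the quadratic term $\Theta\hook\tau$ inside $D\tau$. Since $\Theta=-\partial(\tau+\tilde\lambda)$, this produces contractions $\partial(\tau_i)\hook\tau_j$ and $\partial(\tilde\lambda)\hook\tau_j$; the latter, after applying $\ric'$, recombines with $[\tilde\lambda,\tau]$ and is again controlled by \eqref{eqn:bracketlambda} and Schur. For the former, I would use the representation-theoretic fact (same bookkeeping as in Proposition~\ref{prop:constraint}) that $W_i\otimes W_j$ contains $V_1^1\oplus\R$ only for a short list of pairs $(i,j)$: essentially $\{1,5\},\{1,6\},\{2,5\},\{2,6\},\{3,7\},\{3,8\},\{4,7\}$ together with their ``$\sigma$-images'', explaining precisely the cross terms $F(\tau_1,\tau_5)$, $F(\tau_1,\tau_6)$, $F(\tau_2,\tau_5)$, $F(\tau_2,\tau_6)$, $\overline F(\tau_2,\tau_6)$, $F(\tau_3,\tau_7+\tau_8)$, $F(\tau_4,\tau_7)$, and the $f_4\wedge f_8$ and $\langle f_4,f_8\rangle F$ terms. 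For each surviving pair I would compute the single scalar (or, for the $\{2,6\}$ pair, the two scalars distinguishing $F(\tau_2,\tau_6)$ from $\overline F(\tau_2,\tau_6)$, since $W_2\otimes W_6$ carries two copies of the relevant module) by evaluating on a convenient highest-weight vector, exactly as the generators $v_1,v_2,v_3,w_1,w_2,w_3$ were used in Lemma~\ref{lemma:ricprime}. The $F(\tau,\tau)^{1,1}$ term of Lemma~\ref{lemma:ricprime} is purely algebraic and contributes to the same pairings; I would expand $F$ via its definition on $\Lambda^2V\oplus\Lambda^2V^*$ and fold it into the coefficients.

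The last step is assembling all contributions and collecting coefficients. The $df_4^{1,1}$ and $\Lambda(df_4)F$ terms come from $\mathfrak{a}(\nabla\tau_4)$ rewritten via $\tau_4=a_i e^k\otimes e_{ki}$ and $f_4=a_i e^{n+i}$, using that $d$ of the invariant pairing realizes $\mathfrak{a}(\nabla(\text{form}))$ up to torsion corrections that themselves land in the already-listed quadratic classes; the coefficients $2(n-2)$ and $2n$ track the $n$-dependence in the Ricci contractions $\ric(v_i)$, $\ric(w_i)$ computed in Lemma~\ref{lemma:ricprime}, and the overall factor $2$ in front of $\ric'(D\tau+[\tilde\lambda,\tau])$ is inherited from that lemma. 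I expect the main obstacle to be precisely this coefficient bookkeeping: keeping consistent normalizations for $\partial$, for $\mathfrak{a}$, for the $\Lambda$-operator \eqref{eqn:LambdaOn21}, and for the two contractions $F(\cdot,\cdot)$ and $\overline F(\cdot,\cdot)$, and making sure each cross term is counted once with the right sign — in particular getting the asymmetry right between e.g. $\partial(\tau_1)\hook\partial(\tau_5)$ and $\partial(\tau_5)\hook\partial(\tau_1)$, which is why the coefficient of $F(\tau_1,\tau_5)$ is $-10$ rather than a ``symmetric'' value. A useful internal consistency check is the $\sigma$-symmetry (swapping $V\leftrightarrow H$, $\tau_i\leftrightarrow\tau_{i+4}$, $\lambda_I\mapsto-\lambda_\sigma$): the formula for $\ric'$ must be $\sigma$-equivariant after accounting for the identification \eqref{eqn:identify11}, and this pins down several of the coefficients, while the constraints of Proposition~\ref{prop:constraint} can be used to eliminate any leftover ambiguity coming from the kernel of the Ricci contraction on $\mathcal R$.
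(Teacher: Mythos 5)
Your proposal is correct and follows essentially the same route as the paper: starting from Lemma~\ref{lemma:ricprime}, using equivariance to isolate the components of $D\tau+[\tilde\lambda,\tau]$ and of the quadratic contractions that can map to $V_1^1\oplus\R$, rewriting $D\tau_i$ as $\mathfrak{a}(\nabla\tau_i)$ plus torsion contractions, and expressing the $\tau_4$ and $\lambda$ contributions through $df_4$ and $d\lambda$. The only additions beyond the paper's argument are the $\sigma$-symmetry and Bianchi-identity consistency checks, which are sensible safeguards for the coefficient bookkeeping you rightly identify as the main source of error.
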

\begin{proof}
As $[\tilde \lambda,\tau_i]$ depends equivariantly on an element of $T^*\otimes W_i$, by equivariance, the only contribution of $[\tilde\lambda,\tau ]$ to $\ric'$ comes from $[\tilde\lambda,\tau_3+\tau_4+\tau_7+\tau_8]$. Since $\tau_3+\tau_4\in V^*\otimes \Lambda^2V$ and $\tau_7+\tau_8\in V\otimes \Lambda^2V^*$, 
\begin{equation}
\label{eqn:bracketlambdatau}
\ric'([\tilde\lambda,\tau])=\ric'(2\lambda\wedge (\tau_3+\tau_4)-2\lambda\wedge(\tau_7+\tau_8)).
\end{equation}
By \eqref{eqn:exteriorcovariant},
\[D(\tau_1+\tau_2) =-\partial(\tau+\tilde\lambda)\hook (\tau_1+\tau_2)\mod T^*\otimes (W_1+W_2);\]
and the same holds for $\tau_5+\tau_6$; by equivariance,
\[\ric'\!(D\tau) \!= \! \ric'\!(-\partial(\tau_5+\tau_6)\hook\!(\tau_1+\tau_2)
  -\partial(\tau_1+\tau_2)\hook (\tau_5+\tau_6) + D\tau_3+D\tau_4+D\tau_7+D\tau_8).\]
Since the Ricci tensor is symmetric, we can identify $\ric'$ with its projection on $V^*\otimes V$; using Lemma~\ref{lemma:ricprime},
\[\ric'\!=2\ric'\!(  D\tau_3+D\tau_4   -\partial(\tau_1+\tau_2)\hook\! (\tau_5+\tau_6)+2\lambda^{1,0}\wedge (\tau_3+\tau_4))+nd\lambda^{1,1}-F(\tau,\tau)^{1,1}.\]
Recalling that the contraction of $\partial(\tau_7+\tau_8)$ into $\tau_3+\tau_4$ is zero and \eqref{eqn:bracketlambda}, we obtain
\begin{align*}
\ric'(D\tau_3+D\tau_4)&=\ric'(\mathfrak{a}(\nabla\tau_3+\nabla\tau_4)-\partial(\tilde\lambda)\hook (\tau_3+\tau_4))\\
&=\ric'(\mathfrak{a}(\nabla\tau_3+\nabla\tau_4)-\lambda\wedge (\tau_3+ \tau_4) ).
\end{align*}
Therefore,
\begin{multline*}
\ric'=2\ric'( \mathfrak{a}(\nabla\tau_3+\nabla\tau_4) -\partial(\tau_1+\tau_2)\hook (\tau_5+\tau_6)+\lambda^{1,0}\wedge (\tau_3+\tau_4))\\
+n(d\lambda)^{1,1}-F(\tau,\tau)^{1,1}.
\end{multline*}
Writing $\tau_4$ as in \eqref{eqn:tau4tau8lambda},
\begin{align*}
\ric'(2\lambda^{1,0}\wedge\tau_4)&= 2\bigl(\lambda_i a_i e^k\otimes e^{n+k} +(n-2)\lambda^{1,0}\otimes a_ie^{n+i}\bigr)\\
&= 2(n-2)\lambda^{1,0}\otimes f_4 + 2\langle \lambda^{1,0},f_4\rangle F.
\end{align*}
In order to rewrite the term containing the covariant derivative of $\tau_4$, we may assume
$\nabla\tau_4 = a_{ij} e^j\otimes (e^k\otimes e_{ki});$
this implies
\[\ric(\mathfrak{a}(\nabla\tau_4) )= a_{ii}F + (n-2)a_{ij}e^j\otimes e^{n+i}= a_{ii}F +(n-2)\nabla f_4.\]
On the other hand we have
\begin{align*}
(df_4)^{1,1}& = \mathfrak{a}(\nabla f_4)^{1,1}-\partial(\tau_7+\tau_8)\hook f_4 + \lambda^{1,0}\wedge f_4\\
&= a_{ij}e^{j,n+i} +2d_{ijk}a_k e^{j,n+i} + \lambda^{1,0}\wedge f_4\\
&= a_{ij}e^{j,n+i} +F(\tau_7+\tau_8,\tau_4) + \lambda^{1,0}\wedge f_4,
\end{align*}
where
\[F(\tau_8,\tau_4)=\langle f_4,f_8\rangle F + f_4\wedge f_8.\]
In particular
\[\Lambda (df_4)=\frac1n a_{ii}+ (1-\frac1n)\langle f_4,f_8\rangle+\frac1n\langle \lambda^{1,0},f_4\rangle,\]
giving
\begin{multline*}
\ric(\mathfrak{a}(\nabla\tau_4) )=  ( n\Lambda(df_4) -(n-1)\langle f_4,f_8\rangle-\langle \lambda^{1,0},f_4\rangle) F \\
+(n-2)(df_4^{1,1}-F(\tau_7+\tau_8,\tau_4) - \lambda^{1,0}\wedge f_4).
\end{multline*}
Decomposing $\tau$ as in \eqref{eqn:tauindices}, we find
\begin{align*}
F(\tau_1+\tau_2,\tau_5+\tau_6)&=- 2a_{ijk}b_{hjk} e^i\wedge e^{n+h}, \\
\overline{F}(\tau_1+\tau_2,\tau_5+\tau_6)&=- 4a_{ijk}b_{ijh} e^k\wedge e^{n+h};
\end{align*}
then
\begin{multline*}
\ric(\partial(\tau_1+\tau_2)\hook(\tau_5+\tau_6))= 4(a_{jik}b_{khj}-a_{ijk}b_{khj})e^i\otimes e^{n+h}\\
=4F(\tau_1,\tau_5)-2F(\tau_1,\tau_6)+ F(\tau_2,\tau_5)-\overline F(\tau_2,\tau_6). 
\end{multline*}
By contrast,
\[F(\tau,\tau)=2F(\tau_1+\tau_2,\tau_5+\tau_6)+2F(\tau_3+\tau_4,\tau_7+\tau_8).\]
Summing up,
\begin{multline*}
\ric'=2\ric'( \mathfrak{a}(\nabla\tau_3)+\lambda^{1,0}\wedge \tau_3) + 2(n-2)(df_4-F(\tau_7+\tau_8,\tau_4)-\lambda^{1,0}\wedge f_4)\\
+2( n\Lambda(df_4) -(n-1)\langle f_4,f_8\rangle) F
-8F(\tau_1,\tau_5)+4F(\tau_1,\tau_6)-2F(\tau_2,\tau_5)+2\overline F(\tau_2,\tau_6)\\
+2(n-2)\lambda^{1,0}\wedge \tau_4+n(d\lambda)^{1,1}-2F(\tau_1+\tau_2,\tau_5+\tau_6)-2F(\tau_3+\tau_4,\tau_7+\tau_8),
\end{multline*}
from which we obtain the statement.
\end{proof}

As an $\SL(n,\R)$-module, $V^*\otimes V$ splits as the sum $V_1^1\oplus\R$; the two components of $\ric'$ in this decomposition can be written as $\ric'-sF$ and $s$, where $s$ denotes the scalar curvature
\[s=\frac1n\ric(e_i,e_{n+i}).\]
\begin{corollary}
\label{cor:s}
The scalar curvature of an $\SL(n,\R)$-structure is given by
\begin{multline*}
s=\frac{10}n\langle \tau_1,\tau_5\rangle -\frac2n \langle \tau_2,\tau_6\rangle
-\frac2n \langle \tau_3,\tau_7\rangle\\
+4(n-1)\Lambda df_4 - \frac{2(n-1)(2n-1)}{n}\langle f_4,f_8\rangle+n\Lambda (d\lambda).
\end{multline*}
\end{corollary}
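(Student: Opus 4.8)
The plan is to read off $s$ from Theorem~\ref{thm:ricci} by applying the trace operator $\Lambda$. As recalled just above the statement, under the identification \eqref{eqn:identify11} the $\R$-component of $\ric'$ is $sF$; since $\Lambda F=1$ and $\Lambda$ annihilates trace-free $(1,1)$-forms, this gives $s=\Lambda(\ric')$. Because $\Lambda$ also annihilates $(2,0)$- and $(0,2)$-parts and satisfies $\Lambda(\eta^{1,1})=\Lambda(\eta)$ for every two-form $\eta$, it suffices to apply $\Lambda$ to each summand on the right-hand side of Theorem~\ref{thm:ricci}.

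Most summands are killed by equivariance. The $\Lambda$-trace of $2\ric'(\mathfrak{a}(\nabla\tau_3)+\lambda^{1,0}\wedge\tau_3)$ depends equivariantly on $\nabla\tau_3$ and on $\lambda^{1,0}\otimes\tau_3$, that is, on sections of the bundles associated to $T^*\otimes W_3$ and $V^*\otimes W_3$; both modules have no trivial summand, since $W_3^{*}=W_7=V_1^{1,1}$ is not a summand of $V\oplus V^*$, so this term contributes nothing. In particular the explicit dependence on the minimal connection drops out of the scalar formula. For the same reason $\Lambda$ annihilates $F(\tau_1,\tau_6)$, $F(\tau_2,\tau_5)$, $F(\tau_3,\tau_8)$ and $F(\tau_4,\tau_7)$, each of which contracts a pair of non-dual irreducibles ($W_1$ with $W_6$, $W_2$ with $W_5$, $W_3$ with $W_8$, $W_4$ with $W_7$).

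The surviving terms pair dual modules. By Schur's lemma, $\Lambda F(\tau_1,\tau_5)$, $\Lambda F(\tau_3,\tau_7)$ and $\Lambda(-2F(\tau_2,\tau_6)+2\overline F(\tau_2,\tau_6))$ are fixed multiples of the natural pairings $\langle\tau_1,\tau_5\rangle$, $\langle\tau_3,\tau_7\rangle$ and $\langle\tau_2,\tau_6\rangle$; I would pin down these multiples by a computation in an adapted coframe (equivalently, on highest-weight vectors), which in particular requires comparing the $F$- and $\overline F$-contractions of $\tau_2\otimes\tau_6$. The remaining pieces are elementary: $\Lambda(f_4\wedge f_8)=-\frac1n\langle f_4,f_8\rangle$; the two occurrences of $df_4$, namely in $2(n-2)df_4^{1,1}$ and in the coefficient of $F$, combine to $4(n-1)\Lambda(df_4)$; the two occurrences of $\langle f_4,f_8\rangle$, in $-2(n-1)f_4\wedge f_8$ and in the coefficient of $F$, combine to $-\frac{2(n-1)(2n-1)}{n}\langle f_4,f_8\rangle$; and $n(d\lambda)^{1,1}$ contributes $n\Lambda(d\lambda)$. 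Summing the surviving contributions gives the stated identity.

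The main obstacle is bookkeeping: one must fix once and for all the normalization of $\Lambda$ on $(1,1)$-forms, keep careful track of the sign conventions hidden in $\langle\tau_i,\tau_j\rangle$, $\langle f_4,f_8\rangle$, $F(\cdot,\cdot)$ and $\overline F(\cdot,\cdot)$, and carry out the coordinate computation that produces the ratio between the $F$- and $\overline F$-contractions of $\tau_2\otimes\tau_6$. None of this is conceptually hard, but it is where arithmetic slips most easily occur.
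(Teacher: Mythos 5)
Your proposal is correct and follows essentially the same route as the paper: take $s=\Lambda\ric'$, kill by equivariance all terms of Theorem~\ref{thm:ricci} not containing a trivial summand (the $\nabla\tau_3$ and $\lambda^{1,0}\wedge\tau_3$ terms and the pairings of non-dual $W_i$), and compute the traces of the surviving terms, exactly as in the paper's proof. The remaining constants you defer ($\Lambda F(\tau_1,\tau_5)=-\frac1n\langle\tau_1,\tau_5\rangle$, $\Lambda\overline F(\tau_2,\tau_6)=-\frac2n\langle\tau_2,\tau_6\rangle$, $\Lambda F(\tau_3,\tau_7)=\frac1n\langle\tau_3,\tau_7\rangle$, $\Lambda(f_4\wedge f_8)=-\frac1n\langle f_4,f_8\rangle$) are precisely the "direct computation" the paper records, and they reproduce the stated coefficients.
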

\begin{proof}
By construction, $s=\Lambda \ric'$; in addition, the components which do not contain a copy of $\R$ give no contribution to the scalar curvature by equivariance. This gives
\begin{multline*}
s=(4n-4)\Lambda df_4-10\Lambda F(\tau_1,\tau_5)-2\Lambda F(\tau_2,\tau_6)+2\Lambda\overline F(\tau_2,\tau_6)\\
+n\Lambda(d\lambda)-2\Lambda F(\tau_3,\tau_7)-4(n-1)\langle f_4,f_8\rangle  +2(n-1)\Lambda( f_8\wedge f_4).
 \end{multline*}
A direct computation gives
\begin{align*}
\Lambda(F(\tau_1+\tau_2,\tau_5+\tau_6))&= -\frac1n \langle \tau_1,\tau_5 \rangle -\frac1n \langle \tau_2,\tau_6 \rangle,  \\
\Lambda(\overline{F}(\tau_1+\tau_2,\tau_5+\tau_6))&= -\frac2n \langle \tau_1,\tau_5 \rangle -\frac2n \langle \tau_2,\tau_6 \rangle,  \\
\Lambda(F(\tau_3+\tau_4,\tau_7+\tau_8))&= \frac1n \langle \tau_3,\tau_7 \rangle +\frac1n \langle \tau_4,\tau_8 \rangle,
\end{align*}
proving the statement.
\end{proof}

\begin{remark}
On a metric of neutral signature, the notion of ``positive'' scalar curvature is not meaningful: the pseudoriemannian metrics $g$ and $-g$ have the same Ricci tensor, but opposite scalar curvature. In our setup, this means that if we keep the splitting $V\oplus V^*$ but flip the sign of $F$, considering the $\SL(n,\R)$-structure determined by the adapted coframe
\[-e^1,\dotsc, -e^n,e^{n+1},\dotsc, e^{2n},\]
then the $\tau$ and the $\lambda$ stay the same, but $s$ changes its sign.
\end{remark}

\smallskip
The remaining part of the Ricci is given as follows. Denote by $\epsilon$ the symmetrization map
\[\epsilon\colon T^*\otimes T^*\to S^2T^*, \quad \eta\otimes \gamma \mapsto \eta\odot\gamma.\]
\begin{theorem}\label{thm:RicciS2}
The $\ric''$ component of the Ricci tensor of an $\SL(n,\R)$-structure satisfies
\begin{equation*}
 \begin{split}
 [\ric'']&_{S^2V} = \epsilon \Bigl((n-1)(\nabla f_4-f_4\otimes f_4+\lambda^{0,1}\otimes f_4) \\
&+\ric (\mathfrak{a}\nabla\tau_6-\partial(\tau_7)\hook(\tau_5 +\tau_6)-\partial(\tau_8)\hook \tau_6 
-\partial(\tau_3)\hook \tau_3+ 3\lambda^{1,0}\wedge\tau_6)^{0,2}\Bigr),\\
[\ric'']&_{S^2V^*} = \epsilon \Bigl((n-1)(\nabla f_8- f_8\otimes f_8-\lambda^{1,0}\otimes f_8) \\
&+\ric (\mathfrak{a}\nabla\tau_2-\partial(\tau_3)\hook(\tau_1 +\tau_2)-\partial(\tau_4)\hook \tau_2-\partial(\tau_7)\hook \tau_7- 3\lambda^{0,1}\wedge\tau_2)^{2,0}\Bigr). 
 \end{split}
\end{equation*}
\end{theorem}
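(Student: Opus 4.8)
The plan is to mirror the derivation of Theorem~\ref{thm:ricci}, but now extracting the $S^2V\oplus S^2V^*$ part of the Ricci contraction rather than the $V^*\otimes V$ part. By the $\sigma$-symmetry that swaps $V$ and $H$ (hence $\tau_i\leftrightarrow\tau_{i+4}$, $f_4\leftrightarrow f_8$, $\lambda^{1,0}\leftrightarrow-\lambda^{0,1}$), it suffices to compute $[\ric'']_{S^2V^*}$; the $S^2V$ component then follows by applying this involution. So I would first redo the module-theoretic bookkeeping of Lemma~\ref{lemma:ricprime}: among the components of $\mathcal R\subset S^2(\Lambda^2T^*)$, identify those isomorphic to $S^2V^*$ (equivalently $V_2$, a type-$(0,2)$ module). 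These sit inside the curvature block $D\tau+[\tilde\lambda,\tau]\in\Lambda^2T^*\otimes\gl(n,\R)^\perp$ together with a contribution from $\Omega+\tfrac12[[\tau,\tau]]_{\Sl}$. The key simplification is that $\Omega$ itself (the curvature of the minimal connection) carries no $S^2V^*$ component in the relevant slot once the first Bianchi identity is imposed — just as $v_3$ and $w_1$ exhausted the $\gl^\perp$-part in the lemma — so the $S^2V^*$ part of $\ric''$ is governed entirely by the intrinsic torsion through $\mathfrak a(\nabla\tau)$ and quadratic terms $\partial(\tau_i)\hook\tau_j$.

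Next I would isolate which $\tau_i$ actually contribute. The Ricci contraction composed with projection onto $S^2V^*$ is $\GL(n,\R)$-equivariant, so by Schur's lemma only torsion data whose covariant derivative or pairwise contractions contain a copy of $V_2$ survive. From \eqref{eqn:tauindices}, $\nabla\tau_2\in V^*\otimes V^*\otimes\Lambda^2V^*$ and $\nabla f_8$ (the $W_8$ part) contain such a component; the quadratic terms that can land in $S^2V^*\cong\Lambda^{2,0}$-type modules under $\overline F$-contraction are $\partial(\tau_3)\hook(\tau_1+\tau_2)$, $\partial(\tau_4)\hook\tau_2$, and $\partial(\tau_7)\hook\tau_7$, together with the $\tilde\lambda$-bracket terms $\lambda^{0,1}\wedge\tau_2$ and $\lambda^{0,1}\otimes f_8$ coming from \eqref{eqn:bracketlambda} and \eqref{eqn:bracketlambdatau}. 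I would use \eqref{eqn:exteriorcovariant} to trade $D\tau_2$ for $\mathfrak a(\nabla\tau_2)$ minus the torsion contractions — exactly as in Theorem~\ref{thm:ricci} — and then absorb the $W_8=V^*$ summand of $\nabla\tau_2$ into the explicit $f_8$ terms using the characterization \eqref{eqn:tau4tau8lambda} of $\tau_8$, paralleling the treatment of $\tau_4$ and $f_4$ in the previous proof (this is where the $(n-1)$ coefficient and the $-f_8\otimes f_8$ term arise, from $(df_8)^{2,0}$ and $F(\tau_3+\tau_4,\tau_8)$).

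The main obstacle will be pinning down the exact numerical coefficients of the quadratic contractions $\partial(\tau_i)\hook\tau_j$ after projecting onto $S^2V^*$ and symmetrizing via $\epsilon$. Each such term is a sum over several index contractions (as in the $4F(\tau_1,\tau_5)-2F(\tau_1,\tau_6)+\dots$ computation), and the $F$- versus $\overline F$-contractions distribute the $\tau_i$-components with different weights; keeping track of which contractions survive symmetrization into $S^2V^*$ (as opposed to dying in $\Lambda^{2,0}$ or landing in $V^*\otimes V$) requires care. I would handle this by the same "highest weight vector" strategy as Lemma~\ref{lemma:ricprime}: write the relevant curvature module as a linear combination of a small number of $V_2$-generators, compute the Ricci contraction on each generator once, and compute the two projections $\pi_{\gl^\perp}$ and $\pi_\R$ (here really $\pi_{\gl^\perp}$ and the $\Omega$-block) on the same generators; matching coefficients then forces the formula. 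Finally, verifying that the stated expression is consistent — i.e. that all contributions outside $S^2V^*$ genuinely cancel — is a routine but lengthy check, and I would present only the resulting identity, noting that the $S^2V$ half is obtained by the $\sigma$-involution.
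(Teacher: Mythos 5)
Your proposal is correct and follows essentially the same route as the paper: reduce to one of the two components via the $\sigma$-involution, use Schur-type bookkeeping to isolate the surviving $W_i\otimes W_j$ and $T^*\otimes W_i$ summands, expand $D\tau$ via \eqref{eqn:exteriorcovariant}, and evaluate the few explicit contractions producing the $(n-1)$ coefficients and the $f\otimes f$ terms.

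One point to repair in the write-up: the claim that ``$\Omega$ itself carries no $S^2V^*$ component in the relevant slot'' is not literally true --- $\Lambda^{0,2}\otimes\Sl(n,\R)$ does contain a copy of $S^2V^*$, so the $\Sl(n,\R)$-valued block of the curvature can have a nonzero component there. What actually makes the reduction work (and what the paper uses) is that $S^2(\Lambda^2T^*)$ contains $S^2V^*$ with multiplicity one, arising as the symmetrization of the two copies sitting in $\Sl(n,\R)\otimes\Sl(n,\R)^\perp$ and $\Sl(n,\R)^\perp\otimes\Sl(n,\R)$; since the Riemann tensor lies in $S^2(\Lambda^2T^*)$, its $S^2V^*$-isotypic part is therefore already determined by the projection $\pi_{\gl^\perp}(R)=D\tau+[\tilde\lambda,\tau]$, with the symmetrization $\epsilon$ accounting for the other copy. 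This is a statement about the pair symmetry of $R$, not a vanishing statement about $\Omega$; with that correction your argument closes.
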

\begin{proof}
In the case we consider the action of $\GL(n,\R)$; two representations are isomorphic if they are under $\SL(n,\R)$ and they have the same weight under $\R^*$.

The space $\Lambda^2T^*\otimes \Lambda^2T^*$ contains two copies of $S^2V$, contained in $\Sl(n,\R)\otimes \Sl(n,\R)^\perp$ and $\Sl(n,\R)^\perp\otimes \Sl(n,\R)$ respectively. Since the Riemann tensor is symmetric, this means that the $S^2V$ part of the Ricci is entirely determined by its component in $\Sl(n,\R)\otimes \Sl(n,\R)^\perp$, i.e.
\[[\ric(\Omega)]_{S^2V} = \epsilon (\ric([D\tau + [\tilde\lambda,\tau]]_{S^2V})).\]
The only $W_i\otimes W_j$ that contain a copy of $S^2V$ are 
\[W_5\otimes W_7,\ W_6\otimes W_7,\ W_6\otimes W_8,\ W_3\otimes W_3,\ W_4\otimes W_4.\]
Moreover, the two copies of $S^2V$ inside $T^*\otimes W_i$ are contained in $V\otimes W_4$  and $V^*\otimes W_6$. In consequence,
\begin{equation*}
\begin{split}
[D\tau]_{S^2V}& =
[\mathfrak{a}(\nabla\tau)-\partial(\tau)\hook\tau - \partial(\tilde\lambda)\hook\tau]_{S^2V}\\
&=
\bigl[\mathfrak{a}(\nabla\tau_4+\nabla\tau_6)-\partial(\tau_7)\hook(\tau_5 +\tau_6)-\partial(\tau_8)\hook \tau_6 \\
&\qquad -\partial(\tau_3)\hook \tau_3-\partial(\tau_4)\hook \tau_4-\lambda^{0,1}\wedge \tau_4 + \lambda^{1,0}\wedge\tau_6\bigr]_{S^2V}.
\end{split}
\end{equation*}
On the other hand \eqref{eqn:bracketlambdatau} gives
\[
[[\tilde\lambda,\tau]]_{S^2V} = [2\lambda^{0,1}\wedge \tau_4+2\lambda^{1,0}\wedge\tau_6]_{S^2V}.\]
Now
\[\ric(\partial(\tau_4) \hook\tau_4)= (n-1)a_ia_je^{n+i}\otimes e^{n+j} = (n-1)f_4\otimes f_4;\]
writing $a_{ij}e^{n+i}\otimes e^{n+j}$ for the $V\otimes V$ component of $\nabla f_4$, we obtain
\[\ric(\mathfrak{a}\nabla\tau_4)= (n-1)a_{ij}e^{n+i}\otimes e^{n+j}.\] 
Moreover
\[\ric(\lambda^{0,1}\wedge\tau_4)= (n-1)\lambda^{0,1}\otimes f_4,\]
giving the first formula in the statement. The second formula is obtained applying the symmetry $\sigma$ that interchanges $V$ and $H$.
\end{proof}

\section{Forms}
In this section we find formulae that express the intrinsic torsion and the Ricci curvature in terms of exterior derivatives, rather than exterior covariant derivatives. In particular we relate the intrinsic torsion to the exterior derivatives of the forms $\alpha\in\Lambda^{n,0},\ \beta\in\Lambda^{0,n},\ F\in\Lambda^{1,1}$.

First, we observe that $v\mapsto v\wedge\alpha$, $v\mapsto v\wedge\beta$ induce isomorphisms
\[\pi_{0,1}\colon \Lambda^{n,1}\to \Lambda^{0,1},\quad \pi_{1,0}\colon \Lambda^{1,n}\to \Lambda^{1,0}.\]
\begin{proposition}
\label{prop:forms}
The intrinsic torsion of an $\SL(n,\R)$-structure determines $dF$, $d\alpha$ and $d\beta$ via
\begin{gather*}
(d F)^{3,0} = -\partial(\tau_1)\hook F, \qquad (dF)^{2,1}= -\partial(\tau_7)\hook F-2f_8\wedge F,\\
(d F)^{0,3} = -\partial(\tau_5)\hook F, \qquad (dF)^{1,2}= -\partial(\tau_3)\hook F -2f_4\wedge F,\\
(d\alpha)^{n,1}= -(n\lambda^{0,1} + (n-1)f_4)\wedge\alpha, \qquad 
(d\alpha)^{n-1,2}=  -\partial(\tau_5+\tau_6)\hook\alpha ,\\
(d\beta)^{1,n}=  (n\lambda^{1,0} - (n-1)f_8)\wedge \beta , \qquad 
(d\beta)^{2,n-1}=   -\partial(\tau_1+\tau_2)\hook\beta.
\end{gather*}
Conversely,
\begin{gather*}
f_4=-\frac12\left(\Lambda (dF)\right)^{0,1},\qquad f_8=-\frac12\left(\Lambda (dF)\right)^{1,0},\\
\tau_1 = \frac16 e^i\otimes e_i\hook (d F)^{3,0},\qquad\tau_5 = -\frac16 e^{n+i}\otimes e_{n+i}\hook (d F)^{0,3},\\
\tau_2 =\frac12\langle ((d\beta)^{2,n-1}+\partial(\tau_1)\hook\beta),e^{n+j,n+k}\wedge (e_{i}\hook\alpha)\rangle e^{i}\otimes e^{j,k},\\
\tau_6 =\frac12\langle ((d\alpha)^{n-1,2}+\partial(\tau_5)\hook\alpha),e^{jk}\wedge (e_{n+i}\hook\beta)\rangle e^{n+i}\otimes e^{n+j,n+k},\\
\tau_3 = -\frac12  e^i\otimes e_i\hook (dF)^{1,2}-\tau_4,\quad
\tau_7 = \frac12  e^{n+i}\otimes e_{n+i}\hook (dF)^{2,1}-\tau_8,\\
\lambda^{0,1} = -\frac1n \pi_{0,1}(d \alpha)^{n,1}-\frac{n-1}{n}f_4,\qquad\lambda^{1,0} = \frac1n \pi_{1,0}(d \beta)^{1,n}+\frac{n-1}{n}f_8.
\end{gather*}
\end{proposition}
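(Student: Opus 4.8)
The plan is to exploit the fact that $F$, $\alpha$ and $\beta$, being $\SL(n,\R)$-invariant elements of $\Lambda^* T^*$, are parallel for the minimal connection $\nabla$; hence $\mathfrak{a}(\nabla F)=\mathfrak{a}(\nabla\alpha)=\mathfrak{a}(\nabla\beta)=0$, and \eqref{eqn:exteriorcovariant} collapses into the master identity
\[dX=\Theta\hook X=-\partial(\tau+\tilde\lambda)\hook X,\qquad X\in\{F,\alpha,\beta\},\]
with $\Theta=-\partial(\tau+\tilde\lambda)$ the torsion of $\nabla$. Everything else is deduced from this by feeding in the explicit description of the torsion. For the $\tilde\lambda$-part, \eqref{eqn:bracketlambda} gives at once $\partial(\tilde\lambda)\hook F=0$ (as $F$ has type $(1,1)$), $\partial(\tilde\lambda)\hook\alpha=n\lambda^{0,1}\wedge\alpha$ and $\partial(\tilde\lambda)\hook\beta=-n\lambda^{1,0}\wedge\beta$. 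For the $\tau$-part I write each $\tau_i$ explicitly using \eqref{eqn:tauindices}--\eqref{eqn:tau4tau8lambda} and the identification \eqref{eqn:lambdaisso}, and then compute $\partial(\tau_i)\hook X$ directly from the displayed formula for $\partial$; this amounts to a handful of one-line contractions, of the shape $\partial(e^i\otimes M_{e^{jk}})\hook F=-2e^{ijk}$, $\partial(\tau_3+\tau_4)\hook F=2c_{ijk}e^{i,n+j,n+k}$, $\partial(\tau_4)\hook F=2f_4\wedge F$, $\partial(\tau_4)\hook\alpha=(n-1)f_4\wedge\alpha$, and so on.

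Projecting the master identity onto the type components $\Lambda^{p,q}$ and invoking Schur's lemma to kill the equivariant maps between non-isomorphic $\GL(n,\R)$-modules isolates which $\tau_i$ can reach each $(dX)^{p,q}$: only the totally skew part of $\tau_1+\tau_2$, namely $\tau_1$, survives in $(dF)^{3,0}$; only $\tau_5$ in $(dF)^{0,3}$; $\tau_3+\tau_4$ fills $(dF)^{1,2}$, $\tau_7+\tau_8$ fills $(dF)^{2,1}$; and $\tau_5+\tau_6$, $\tau_1+\tau_2$ fill $(d\alpha)^{n-1,2}$, $(d\beta)^{2,n-1}$. The residual contributions are precisely the ``trace'' terms: $\partial(\tau_4)\hook F=2f_4\wedge F$ (and its image $\partial(\tau_8)\hook F=2f_8\wedge F$ under the symmetry $\sigma$ of Section~3) accounts for $2f_4\wedge F$, $2f_8\wedge F$ in $(dF)^{1,2}$, $(dF)^{2,1}$, while the trace of the $\tau_4$-part of $\tau_3+\tau_4$ contracted into $\alpha$ gives $(n-1)f_4\wedge\alpha$ in $(d\alpha)^{n,1}$, with $(n-1)f_8\wedge\beta$ its $\sigma$-image in $(d\beta)^{1,n}$. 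Using $\sigma$ systematically halves the work: it suffices to establish the formulas for $(dF)^{3,0}$, $(dF)^{2,1}$, $(d\alpha)^{n,1}$ and $(d\alpha)^{n-1,2}$, the rest following by applying $\sigma$.

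For the converse, each of the maps $\gamma\mapsto\partial(\gamma)\hook F$, $\gamma\mapsto\partial(\gamma)\hook\alpha$, $\gamma\mapsto\partial(\gamma)\hook\beta$ is $\GL(n,\R)$-equivariant and, restricted to an irreducible summand of $T^*\otimes\Sl(n,\R)^\perp$ carrying a given component of the intrinsic torsion, is either zero or injective (Schur); so inverting the relations above is simply a matter of exhibiting an explicit contraction that undoes the injective ones and fixing its scalar on one representative, which is the origin of $f_4=-\tfrac12(\Lambda dF)^{0,1}$, $\tau_1=\tfrac16 e^i\otimes e_i\hook(dF)^{3,0}$, $\tau_3=-\tfrac12 e^i\otimes e_i\hook(dF)^{1,2}-\tau_4$, etc. The components $\tau_2$ and $\tau_6$ lie in the kernel of $\partial(\cdot)\hook F$ and are invisible in $dF$; they are recovered from $(d\beta)^{2,n-1}$ and $(d\alpha)^{n-1,2}$, but only after subtracting the already-determined $\partial(\tau_1)\hook\beta$, $\partial(\tau_5)\hook\alpha$, and then pairing against the dual frame elements $e^{n+j,n+k}\wedge(e_i\hook\alpha)$, $e^{jk}\wedge(e_{n+i}\hook\beta)$ of $\Lambda^{2,n-1}$, $\Lambda^{n-1,2}$ --- exactly the shape appearing in the statement. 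Finally $\lambda^{0,1}$, $\lambda^{1,0}$ are extracted from $\pi_{0,1}(d\alpha)^{n,1}$, $\pi_{1,0}(d\beta)^{1,n}$ once the $f_4$, $f_8$ pieces (already obtained from $dF$) are removed. The only genuinely delicate point, and the main obstacle, is this trace bookkeeping: the type components $(2,1)$, $(1,2)$, $(n,1)$, $(1,n)$ are reducible, so one must track precisely how the one-form data $f_4$, $f_8$, $\lambda$ split between the $\partial(\tau_i)\hook X$ terms and the explicit wedge terms, and in particular pin down the coefficients $n$, $n-1$, $2$ exactly; once the $\partial$-contractions are tabulated this is mechanical but unforgiving.
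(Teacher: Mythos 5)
Your proposal is correct and follows essentially the same route as the paper: both start from $dX=\Theta\hook X=-\partial(\tau+\tilde\lambda)\hook X$ for the parallel forms $F,\alpha,\beta$, use Schur's lemma to decide which $\partial(\tau_i)\hook X$ can contribute to each type component, compute the residual trace terms ($\partial(\tau_4)\hook F=2f_4\wedge F$, $\partial(\tau_4)\hook\alpha=(n-1)f_4\wedge\alpha$, etc.), and invert by equivariance after fixing scalars on explicit representatives. The only cosmetic difference is your systematic use of the symmetry $\sigma$ to halve the casework, which the paper does not invoke here but which is consistent with the transformation rules of Section~3.
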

\begin{proof}
The usual formula \eqref{eqn:exteriorcovariant} gives
\[d\alpha = -\partial(\tau+\tilde\lambda)\hook\alpha = -\partial(\tau_3+\tau_4+\tau_5+\tau_6)\hook\alpha - n\lambda^{0,1}\wedge\alpha.\]
However, $-\partial(\tau_3)\hook \alpha$ is zero because it lies in $\Lambda^{n,1}\cong V$ but $W_3$ is not isomorphic to $V$; in addition: 
\begin{gather*}\partial(\tau_4)\hook\alpha= -\sum_{i\neq k} a_i e^{k,n+i}\otimes e_k\hook\alpha =  \sum_{i\neq k}a_ie^{n+i}\wedge \alpha=(n-1)f_4\wedge \alpha\\
d\alpha = -\partial(\tau_5+\tau_6)\hook\alpha -(n\lambda^{0,1} +(n-1)f_4)\wedge\alpha
\end{gather*}
where we have used  \eqref{eqn:pag7alto}.
Similarly, $\partial(\tau_8)\hook\beta= (n-1)f_8\wedge \beta,$
\[d\beta = -\partial(\tau_1+\tau_2)\hook\beta +(n\lambda^{1,0}-(n-1)f_8)\wedge\beta.\]
For $F$ we compute
\[dF= -\partial(\tau+\tilde\lambda)\hook F=-2(f_4+f_8)\wedge F-\partial(\tau_1+\tau_3+\tau_5+\tau_7)\hook F\]
where we have used that $\partial(\tau_2+\tau_6)\hook F$ is zero because $W_2$ and $W_6$ are not isomorphic to any subspace of $\Lambda^3=\Lambda^{3,0}\oplus\Lambda^{2,1}\oplus\Lambda^{1,2}\oplus\Lambda^{0,3}$. By projecting this last formula on the different types of $(p,q)$-forms we get the statement.

We now prove the inverse formulae. From the last equation, we immediately get 
\[-\frac12\Lambda (dF)=f_4+f_8\]
and the equations for $\lambda^{1,0}$ and $\lambda^{0,1}$ are obvious.

If we set $\tau_1=e^i\otimes e^{jk}+e^j\otimes e^{ki}+e^k\otimes e^{ij}$; then
\[(d F)^{3,0}=-\partial(\tau_1)\hook F = 6 e^{ijk}\]
and it follows that 
\[\frac16 e^h\otimes e_h\hook (d F)^{3,0}=\tau_1.\]
Setting $\tau_3+\tau_4= e^i\otimes e^{n+j,n+k}$ we obtain
\[(d F)^{1,2}=-\partial(\tau_3+\tau_4)\hook F =-2 e^{i,n+j,n+k},\]
thus
\[\tau_3 + \tau_4 = -\frac12 e^i\otimes e_i\hook (dF)^{1,2}.\]
Finally if $\tau_2= a_{ijk}e^i\otimes e^{jk}$, for a fixed tensor $e^{n+h,n+l}\otimes e_m$ we have
\begin{align*}
\langle -\partial(\tau_2)&\hook\beta, e^{n+h,n+l}\otimes e_m\hook\alpha \rangle \\
&= \langle - a_{ijk} e^{ij}\otimes e_{n+k}\hook\beta+a_{ijk} e^{ik}\otimes e_{n+j}\hook\beta, e^{n+h,n+l}\otimes e_m\hook\alpha\rangle \\
&= -2a_{hlm}+2a_{lhm}= 2a_{mhl},
\end{align*}
since $a_{ijk}+a_{jki}+a_{kij}=0$ and $a_{ijk}=-a_{ikj}$. Thus, 
\[\tau_2=  \frac12\langle -\partial(\tau_2)\hook\beta, e^{n+j,n+k}\wedge e_i\hook\alpha \rangle e^i\otimes e^{jk}.\]
The remaining equations are proved in the same way.
\end{proof}

We can relate each component of the intrinsic torsion as we have done for $\tilde{\lambda}$, related to the 1-form $\lambda$, and $\tau_4$,  $\tau_8$  (see \eqref{eqn:pag7alto}). Indeed, the component $\tau_1=a_{ijk}(e^i\otimes e^{jk}+e^j\otimes e^{ki}+e^k\otimes e^{ij})$ can be associated to the $(3,0)$-form $a_{ijk}e^{ijk}$, and analogously for $\tau_5$; we set
\[f_3=-\partial (\tau_3)\hook F,\qquad f_7=-\partial (\tau_7)\hook F.\]
Finally, if $\tau_2=a_{ijk} e^{i}\otimes e_{n+j,n+k}$, we set 
\[f_2=a_{ijk}  (e_{n+j,n+k}\hook \beta)\wedge e^{i}\in\Lambda^{1,n-2},\]
and if $\tau_6=b_{ijk} e^{n+i}\otimes e_{jk}$ we set 
$f_6=b_{ijk}  (e_{jk}\hook \alpha)\wedge e^{n+i}\in\Lambda^{n-2,1}$. We use the following convention: for any $p$-form $\sigma$, $p\geq 2$  and any bi-vector $e_{jk}$ the $(p-2)$-form $e_{jk}\hook \sigma$ is defined by
\[(e_{jk}\hook \sigma)(X_1,\dots,X_{p-2})=\sigma (e_j, e_k, X_1,\dots,X_{p-2});\]
equivalently, $e_{jk}\hook \sigma=e_k\hook(e_{j}\hook \sigma)$. 

We can then restate the equations of the Ricci curvature by expressing the $\nabla \tau_i$ in terms of the exterior derivative of the forms $f_i$.  To this purpose, we identify $V\otimes V$ with $\Lambda^{n-1,1}$ through
\begin{equation}
 \label{eqn:VotimesVwithLambdan1}
v\otimes w\mapsto (v\hook\alpha)\wedge w,
 \end{equation}
enabling us to identify a subspace of $\Lambda^{n-1,1}$ isomorphic to $S^2V$. Similarly, to obtain a subspace isomorphic to $S^2V^*$ we identify $V^*\otimes V^*$ with $\Lambda^{1,n-1}$ via the isomorphism $v \otimes w \mapsto (v\hook\beta)\wedge w$.

\begin{lemma}\label{lemma:f2f6equations}
The following equations hold for $f_2$ and $f_6$:
\begin{align*}
[\ric(\mathfrak{a}(\nabla \tau_6)-\partial(\tau_7+\tau_8)\hook\tau_6)+3\lambda^{1,0}\wedge \tau_6]_{S^2V}&= [df_6+n\lambda^{1,0}\wedge f_6]_{S^2V},\\
[\ric(\mathfrak{a}(\nabla \tau_2)-\partial(\tau_3+\tau_4)\hook\tau_2)-3\lambda^{0,1}\wedge \tau_2]_{S^2V^*}& = [df_2-n\lambda^{0,1}\wedge f_2]_{S^2V^*}
\end{align*}
and the following identities hold for $f_4$ and $f_8$:
\begin{equation*}
\begin{alignedat}{3} [\nabla f_4- f_4\otimes f_4+\lambda^{0,1}\otimes f_4]_{S^2V}&=[ (-1)^{n-1}&&d(f_4\hook\alpha)]_{S^2V} \\
& &&+ \frac12n\lambda^{0,1}\odot f_4 + (n-2)f_4\otimes f_4,\\
  [\nabla f_8- f_8\otimes f_8-\lambda^{1,0}\otimes f_8]_{S^2V^*} &=[ (-1)^{n-1}&&d(f_8\hook\beta)]_{S^2V^*}\\
 & &&- \frac12n\lambda^{1,0}\odot f_8 + (n-2)f_8\otimes f_8.
 \end{alignedat}
\end{equation*}
\end{lemma}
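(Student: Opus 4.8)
The plan is to compute each side by unwinding the definitions and using the formula \eqref{eqn:exteriorcovariant} relating $d$ and the exterior covariant derivative $D$ of the minimal connection, applied to the forms $f_2$, $f_6$, $f_4\hook\alpha$ and $f_8\hook\beta$. By the symmetry $\sigma$ that interchanges $V$ and $H$ (and sends $\tau_6\mapsto\tau_2$, $\tau_8\mapsto\tau_4$, $\tau_7\mapsto\tau_3$, $\lambda^{1,0}\leftrightarrow-\lambda^{0,1}$), it suffices to establish the two equations in the $S^2V$-component; the $S^2V^*$-statements then follow formally. So I would fix attention on $f_6$ and on $f_4\hook\alpha$.

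First, for $f_6$: writing $\tau_6=b_{ijk}e^{n+i}\otimes e_{jk}$ and $f_6=b_{ijk}(e_{jk}\hook\alpha)\wedge e^{n+i}$, I would apply \eqref{eqn:exteriorcovariant} to the tensorial $1$-form $f_6\in\Lambda^{n-2,1}$, obtaining
\[
df_6=\mathfrak{a}(\nabla f_6)+\Theta\hook f_6 = \mathfrak{a}(\nabla f_6)-\partial(\tau+\tilde\lambda)\hook f_6.
\]
The point is that $\mathfrak{a}(\nabla f_6)$ is, under the identification \eqref{eqn:VotimesVwithLambdan1} of $V\otimes V$ with $\Lambda^{n-1,1}$, nothing but the image of $\mathfrak{a}(\nabla\tau_6)$ after contracting the $\Lambda^2V$-index against $\alpha$; since $\nabla\alpha=0$ for the minimal connection this identification intertwines $\nabla$ on the two sides, so $[\mathfrak{a}(\nabla f_6)]_{S^2V}$ matches $[\ric(\mathfrak{a}(\nabla\tau_6))]_{S^2V}$ up to the $\ric$-contraction, which on these modules is a fixed nonzero scalar (by Schur, since $S^2V$ appears once). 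The torsion term $\partial(\tilde\lambda)\hook f_6$ gives, via \eqref{eqn:bracketlambda} and the bidegree of $f_6$, exactly $(n-2-1)\lambda\wedge f_6^{(n-2,1)}=\,$ a multiple of $\lambda^{1,0}\wedge f_6$; matching this with the $3\lambda^{1,0}\wedge\tau_6$ term on the left and the $n\lambda^{1,0}\wedge f_6$ on the right fixes the coefficients. The terms $\partial(\tau_7+\tau_8)\hook f_6$ must be shown to reproduce $\ric(\partial(\tau_7+\tau_8)\hook\tau_6)$ under the same contraction, which is again forced by equivariance since $W_6\otimes(W_7\oplus W_8)$ contains $S^2V$ with multiplicity one; and the components $\partial(\tau_1),\dots,\partial(\tau_6)\hook f_6$ either vanish by type or project to zero in $S^2V$ for representation-theoretic reasons (they land in modules with no $S^2V$-summand of the correct $\R^*$-weight).

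Second, for $f_4\hook\alpha$: here $f_4\hook\alpha\in\Lambda^{n-1,0}$, and $d(f_4\hook\alpha)$ lies in $\Lambda^{n,0}\oplus\Lambda^{n-1,1}$; only the $\Lambda^{n-1,1}$-part matters. Applying \eqref{eqn:exteriorcovariant} and using $\nabla\alpha=0$ gives $d(f_4\hook\alpha)=\mathfrak{a}(\nabla f_4)\hook\alpha\pm f_4\wedge(d\alpha)-\partial(\tau+\tilde\lambda)\hook(f_4\hook\alpha)$, and I would substitute the formula for $d\alpha$ from Proposition~\ref{prop:forms}, namely $(d\alpha)^{n,1}=-(n\lambda^{0,1}+(n-1)f_4)\wedge\alpha$ and $(d\alpha)^{n-1,2}=-\partial(\tau_5+\tau_6)\hook\alpha$. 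The $(n-1)f_4\wedge f_4\wedge\alpha$ contribution vanishes (it is $f_4\wedge f_4=0$ wedged with $\alpha$), leaving the $\lambda^{0,1}$-term and a term from $\partial(\tau_7+\tau_8)\hook(f_4\hook\alpha)$; bookkeeping the bidegrees and the $\partial(\tilde\lambda)$-contribution via \eqref{eqn:bracketlambda} produces precisely $\tfrac12 n\,\lambda^{0,1}\odot f_4+(n-2)f_4\otimes f_4$ as the correction to $\nabla f_4-f_4\otimes f_4+\lambda^{0,1}\otimes f_4$ after projecting to $S^2V$, with the sign $(-1)^{n-1}$ coming from moving $f_4$ past $\alpha$ in the wedge.

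The main obstacle I expect is the careful bookkeeping of the Hodge-type contractions and the $\ric$-normalization constants: one has to check that the single nonzero Schur scalar by which $\ric$ acts on the $S^2V$-summand of each relevant $W_i\otimes W_j$ is the \emph{same} as the one relating $\mathfrak{a}(\nabla f_6)$ to $\ric(\mathfrak{a}(\nabla\tau_6))$, so that everything collapses to a clean identity with the stated integer coefficients $n$, $n-1$, $n-2$, rather than a formula cluttered with different constants. This is where a short explicit computation on highest-weight vectors (in the spirit of Lemma~\ref{lemma:ricprime}) is unavoidable; everything else is equivariance plus the substitution of Proposition~\ref{prop:forms}.
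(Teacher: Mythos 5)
Your overall strategy is the one the paper follows: apply \eqref{eqn:exteriorcovariant} to the forms $f_6$ and $f_4\hook\alpha$, use $\nabla\alpha=0$, project onto $S^2V$ via the identification \eqref{eqn:VotimesVwithLambdan1}, and obtain the $S^2V^*$ statements by the symmetry $\sigma$. For the $f_6$ equation your outline is sound: only $\partial(\tau_7+\tau_8+\tilde\lambda)$ contributes to the $(n-1,1)$-part of the torsion term for bidegree reasons, and the remaining normalizations are fixed by the explicit index computation you correctly identify as unavoidable; this is essentially the paper's argument.

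The $f_4$ identity, however, contains a genuine gap. First, the displayed formula $d(f_4\hook\alpha)=\mathfrak{a}(\nabla f_4)\hook\alpha\pm f_4\wedge(d\alpha)-\partial(\tau+\tilde\lambda)\hook(f_4\hook\alpha)$ double-counts: \eqref{eqn:exteriorcovariant} applied to $f_4\hook\alpha$ together with $\nabla\alpha=0$ gives $d(f_4\hook\alpha)=\mathfrak{a}\bigl((\nabla f_4)\hook\alpha\bigr)-\partial(\tau+\tilde\lambda)\hook(f_4\hook\alpha)$ with no separate $d\alpha$ term (and $f_4\wedge d\alpha$ is an $(n+2)$-form in any case, so the expression does not typecheck). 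More seriously, the contribution you discard --- the ``$(n-1)f_4$'' term --- does not vanish: whether it arises as $\partial(\tau_4)\hook(f_4\hook\alpha)$ (the paper's route) or as the contraction of the vector dual to $f_4$ into $(n-1)f_4\wedge\alpha$ (your route), the resulting term is a multiple of $(f_4\hook\alpha)\wedge f_4$, not of $(f_4\wedge f_4)\hook\alpha$; under \eqref{eqn:VotimesVwithLambdan1} it equals $(n-1)f_4\otimes f_4\neq0$, and it is precisely the source of the $(n-2)f_4\otimes f_4$ on the right-hand side of the statement, after absorbing the $-f_4\otimes f_4$ from the left. Dropping it leaves an identity that is off by $(n-1)f_4\otimes f_4$. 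Relatedly, the torsion component that survives in $\Lambda^{n-1,1}$ is $\partial(\tau_3+\tau_4)\hook(f_4\hook\alpha)$ (an element of $\Lambda^{1,1}\otimes V$ contracted into $\Lambda^{n-1,0}$), not $\partial(\tau_7+\tau_8)\hook(f_4\hook\alpha)$, which vanishes because $f_4\hook\alpha$ has no factors in $H^*$.
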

\begin{proof}
We prove the first equation. As usual, we start from the following:
\[d f_6 = \mathfrak{a}(\nabla f_6) -\partial (\tau + \tilde{\lambda})\hook f_6.\]
If $\nabla \tau_6=a_{ijkh} e^i\otimes  e^{n+j}\otimes e_{kh} +b_{ijkh} e^{n+i}\otimes  e^{n+j}\otimes e_{kh}$,
then
\[ \nabla f_6=a_{ijkh} e^i\otimes(e_{kh}\hook\alpha)\wedge e^{n+j} + b_{ijkh} e^{n+i} \otimes(e_{kh}\hook\alpha)\wedge e^{n+j}.\]
We are interested in the $(n-1,1)$-component of $d f_6$, and more precisely in the $S^2(V)$ part. We get:
\[[df_6]^{n-1,1}=[\mathfrak{a}(\nabla f_6) -\partial (\tau_7 +\tau_8 + \tilde{\lambda})\hook f_6]^{n-1,1}.\]
Therefore,
\begin{equation*}
 \begin{aligned}
[df_6]_{S^2V}&=[a_{ijkh} e^{i}\wedge  (e_{kh}\hook\alpha)\wedge e^{n+j} - (n-3)\lambda^{1,0}\wedge f_6-\partial(\tau_7 +\tau_8)\hook f_6]_{S^2V}\\
&=[2a_{ijki}   (e_{k}\hook\alpha)\wedge e^{n+j} - (n-3)\lambda^{1,0}\wedge f_6-\partial(\tau_7 +\tau_8)\hook f_6]_{S^2V}\\
&=a_{ijki}   e^{n+k}\odot e^{n+j} - [(n-3)\lambda^{1,0}\wedge f_6+\partial(\tau_7 +\tau_8)\hook f_6]_{S^2V},
\end{aligned}
\end{equation*}
where we have used the identification \eqref{eqn:VotimesVwithLambdan1}. On the other hand, we have:
\[\ric (\mathfrak{a}(\nabla\tau_6)) = \ric(a_{ijkh} e^{i,n+j}\otimes (e^{n+k}\otimes e_h-e^{n+h}\otimes e_k))
=2a_{ijki} e^{n+j}\otimes e^{n+k}.
\]
Writing $\tau_6=b_{ijk} e^{n+i}\otimes e_{jk}$ and  $\tau_7 +\tau_8= d_{ijk} e^{n+i}\otimes e^{jk}$,
we compute
\[[\ric(-\partial(\tau_7 +\tau_8)\hook\tau_6)]_{S^2V}= 2b_{khj} d_{ijk} e^{n+i}\odot e^{n+h}.\]
Moreover,
\[
-\partial(\tau_7 +\tau_8)\hook f_6 = 4b_{khj} d_{ijk}  e_{h}\hook \alpha \wedge e^{n+i},\]
and the $S^2(V)$ component is $2b_{khj} d_{ijk}  e^{n+h}\odot e^{n+i}$. Finally, note that
\[[\lambda^{1,0}\wedge f_6]_{S^2V} = \lambda_k b_{ijk}e^{n+j}\odot e^{n+i} = [\ric(\lambda^{1,0}\wedge \tau_6)]_{S^2V},\]
which concludes the proof of the first equation. The second one is proved in a similar way.

Consider the $(n-1,0)$-form $f_4\hook \alpha = a_ie_i\hook\alpha$. If $\nabla\tau_4=a_{ij} e^{n+i}\otimes e^k\otimes e_{kj}\allowbreak+b_{ij} e^i\otimes e^k\otimes e_{kj}$ then 
\[\nabla (f_4\hook\alpha)=a_{ij}e^{n+i}\otimes e_j\hook\alpha + b_{ij}e^i\otimes e_j\hook\alpha.\]
As usual, $d (f_4\hook \alpha)=\mathfrak{a}(\nabla(f_4\hook\alpha))-\partial(\tau+\tilde{\lambda})\hook(f_4\hook\alpha)$, but we are interested in the $(n-1,1)$ part, and more precisely in the $S^2(V)$ component. We obtain: 
\begin{equation*}
\begin{aligned}
\left[d (f_4\hook \alpha)\right]_{S^2(V)}&=[\mathfrak{a}\!\left(\nabla(f_4\hook\alpha)\right)-\partial(\tau_4)\hook (f_4\hook\alpha)-(n\!-\!1)\lambda^{0,1}\!\wedge\!(f_4\hook\alpha)]_{S^2(V)}\\
= (-1)^{n-1}[a_{ij}e_j\hook\alpha&\!\wedge\! e^{n+i}-(n\!-\!1)a_j e_j\hook\alpha\!\wedge\! a_i e^{n+i}-(n\!-\!1)a_j e_j\hook\alpha\!\wedge\!\lambda^{0,1}]_{S^2(V)}\\
&= (-1)^{n-1}[\nabla f_4 -(n-1) f_4 \otimes f_4-(n-1)f_4\otimes\lambda^{0,1}]_{S^2(V)}.
 \end{aligned}
\end{equation*}
This ends the proof of the equation involving $f_4$. A similar argument proves the equation for $f_8$.
\end{proof}
\begin{lemma}\label{lemma:ric'nonabla} 
The $V^1_1$-Ricci part of $\tau_3$ and $\tau_7$ can be related to $f_3$ and $f_7$ via the following equations:
\begin{gather*} 
[\ric(\mathfrak{a}(\nabla\tau_3)+\lambda^{1,0}\wedge \tau_3)]_{V_1^1} = 
\frac{2-n}{2}[\Lambda(df_3+\partial(\tau_7+\tau_8)\hook f_3)]_{V_1^1},\\
[\ric(\mathfrak{a}(\nabla\tau_7)-\lambda^{0,1}\wedge \tau_7)]_{V_1^1} = 
\frac{2-n}{2}[\Lambda(df_7+\partial(\tau_3+\tau_4)\hook f_7)]_{V_1^1}.
\end{gather*}
 \end{lemma}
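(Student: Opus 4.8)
The plan is to follow the computational pattern of Lemma~\ref{lemma:f2f6equations}, applied now to the forms $f_3=-\partial(\tau_3)\hook F\in\Lambda^{1,2}$ and $f_7=-\partial(\tau_7)\hook F\in\Lambda^{2,1}$. Since the minimal connection is parahermitian, $\nabla F=0$, so $\nabla f_3=-\partial(\nabla\tau_3)\hook F$; in particular $\mathfrak a(\nabla f_3)$ is a fixed linear, $\GL(n,\R)$-equivariant function of $\nabla\tau_3$, just as $\mathfrak a(\nabla\tau_3)$ is. I would prove the identity for $f_3$ and deduce the one for $f_7$ from it by the symmetry $\sigma$ interchanging $V$ and $H$: since $\sigma$ sends the fundamental form to $-F$ (whence $\Lambda\mapsto-\Lambda$ on each $\Lambda^{p,q}$), it acts by $\tau_3\mapsto\tau_7$, $\tau_7+\tau_8\mapsto\tau_3+\tau_4$, $\lambda^{1,0}\mapsto-\lambda^{0,1}$, $f_3\mapsto-f_7$, and the two sign reversals conspire to reproduce precisely the signs in the stated $f_7$-identity, exactly as $\sigma$ is used in the proofs of Theorem~\ref{thm:RicciS2} and Lemma~\ref{lemma:f2f6equations}.

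For $f_3$, I would start from the universal relation $df_3=\mathfrak a(\nabla f_3)-\partial(\tau+\tilde\lambda)\hook f_3$ and trim the right-hand side by a weight count under $\R^*\subset\GL(n,\R)$. The target $V_1^1\cong\Lambda^{1,1}_0$ has weight $0$; since $f_3\in\Lambda^{1,2}$ has weight $+1$, each $\partial(\tau_i)\hook f_3$ has weight equal to that of $\tau_i$ plus one, and among the pieces $\tau_1+\tau_2,\tau_3+\tau_4,\tau_5+\tau_6,\tau_7+\tau_8$, of weights $-3,+1,+3,-1$, only the last yields a weight-$0$ four-form; likewise, by \eqref{eqn:bracketlambda}, $-\partial(\tilde\lambda)\hook f_3=\lambda\wedge f_3$, whose weight-$0$ part is $\lambda^{1,0}\wedge f_3$. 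As $\Lambda$ preserves weight, this gives
\[\bigl[\Lambda\bigl(df_3+\partial(\tau_7+\tau_8)\hook f_3\bigr)\bigr]_{V_1^1}=\bigl[\Lambda\bigl(\mathfrak a(\nabla f_3)+\lambda^{1,0}\wedge f_3\bigr)\bigr]_{V_1^1},\]
so the lemma reduces to identifying the right-hand side with $\tfrac{2-n}{2}\bigl[\ric\bigl(\mathfrak a(\nabla\tau_3)+\lambda^{1,0}\wedge\tau_3\bigr)\bigr]_{V_1^1}$.

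This last identification I would carry out by a direct computation in an adapted coframe. Writing $\tau_3=c_{ijk}e^i\otimes(e^{n+j}\otimes e_k-e^{n+k}\otimes e_j)$ with $c_{ijk}=-c_{ikj}$ and $c_{kjk}=0$ (so that $\tau_3\in W_3$), one has $f_3=2c_{ijk}e^{i,n+j,n+k}$; setting $\lambda^{1,0}=\lambda_ie^i$ and writing $\nabla\tau_3$ with an extra covariant index, one then evaluates $\mathfrak a(\nabla f_3)$, its type-$(2,2)$ component, its $\Lambda$-contraction and the trace-free part thereof, and matches this with $\ric(\mathfrak a(\nabla\tau_3))$ — viewing $\mathfrak a(\nabla\tau_3)\in\Lambda^2T^*\otimes\gl(n,\R)^\perp\subset\Lambda^2T^*\otimes\Lambda^2T^*$ and taking its $V_1^1$-part — together with the $\lambda$-terms $\Lambda(\lambda^{1,0}\wedge f_3)$ and $\ric(\lambda^{1,0}\wedge\tau_3)$. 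The coefficient $\tfrac{2-n}{2}$ falls out of this computation; it can in any case be pinned down by evaluating both sides on a single nonvanishing component.

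The main obstacle is the bookkeeping in this last step. Inside $\Lambda^{1,2}\cong W_3\oplus W_4=V_{1,1}^1\oplus V$, the form $f_3$ is by construction the $W_3$-part, but intermediate expressions such as $e_i\hook f_3$, $\Lambda(df_3)$ and the Ricci contraction of $\mathfrak a(\nabla\tau_3)$ mix the $W_3$- and $W_4$-contributions together with the $\R F$-component, so one must use the trace relations $c_{kjk}=0$ consistently and keep the $\Lambda^{1,1}_0$-part cleanly separated from the $\R F$-part (the latter being irrelevant to $[\cdot]_{V_1^1}$, and in fact contributing to the scalar curvature formula of Corollary~\ref{cor:s}). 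A purely representation-theoretic shortcut via Schur's lemma is not available, since $V^*\otimes W_3$ contains $V_1^1$ with multiplicity greater than one, so equivariance alone does not force the two maps to be proportional; the explicit computation, as in Lemma~\ref{lemma:f2f6equations}, appears to be the honest route.
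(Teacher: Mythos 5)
Your overall skeleton matches the paper's: trim $df_3=\mathfrak{a}(\nabla f_3)-\partial(\tau+\tilde\lambda)\hook f_3$ by a weight count so that only $\mathfrak{a}(\nabla f_3)$, $\partial(\tau_7+\tau_8)\hook f_3$ and $\lambda^{1,0}\wedge f_3$ survive in the $\Lambda^{2,2}$-part, then identify what remains with the Ricci contraction, and treat $f_7$ symmetrically. The divergence --- and the problem --- is in how you propose to do the identification. Your claim that a Schur's-lemma shortcut is unavailable because ``$V^*\otimes W_3$ contains $V_1^1$ with multiplicity greater than one'' is false: $V^*\otimes W_3=V^1\otimes V_{1,1}^1=V_{1,1}^{1,1}+V_{1,1}^{2}+V_1^1$ and $V\otimes W_3=V_{2,1}^1+V_{1,1,1}^1+V_{1,1}$, so $V_1^1$ occurs exactly once in $T^*\otimes W_3$. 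You are likely thinking of $T^*\otimes\Lambda^{1,2}$, which does contain two copies of $V_1^1$ because $\Lambda^{1,2}=W_3\oplus W_4$; but $\tau_3$ is by definition the $W_3$-component and $\nabla\tau_3$ is a section of (the bundle associated to) $T^*\otimes W_3$, since the minimal connection preserves the decomposition. This multiplicity-one fact is precisely what the paper exploits: it introduces the explicit equivariant map $h_3\colon\Lambda^{1,1}_0\to T^*\otimes W_3$, assumes $\nabla\tau_3=h_3(e^{n,n+1})$, and checks the identity on this single element, computing $\ric(\mathfrak{a}(\nabla\tau_3))=\tfrac{(2-n)(n+1)}{n}\,e^n\otimes e^{n+1}$ against $\Lambda(df_3+\partial(\tau_7+\tau_8)\hook f_3-\lambda^{1,0}\wedge f_3)^{2,2}=\tfrac{2(n+1)}{n}\,e^{n,n+1}$; the $\lambda^{1,0}\wedge\tau_3$ term, living in the same $V^*\otimes W_3$, is handled by one more such evaluation.

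Because you rule that shortcut out, the entire substance of the lemma in your write-up rests on ``a direct computation in an adapted coframe'' with general coefficients $c_{ijk}$, which you describe but do not perform: the key identity, and in particular the constant $\tfrac{2-n}{2}$, is asserted rather than derived, so there is a genuine gap exactly where all the work lives. (A minor additional slip: after your reduction, what must be shown is $[\ric(\mathfrak{a}(\nabla\tau_3)+\lambda^{1,0}\wedge\tau_3)]_{V_1^1}=\tfrac{2-n}{2}[\Lambda(\mathfrak{a}(\nabla f_3)+\lambda^{1,0}\wedge f_3)]_{V_1^1}$, i.e.\ the right-hand side is $\tfrac{2}{2-n}$ times the Ricci term, not $\tfrac{2-n}{2}$ times it.) The repair is easy and shortens your argument: accept the equivariance argument, and the ``bookkeeping'' you worry about collapses to two evaluations on a single test element, which is the paper's proof.
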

\begin{proof}
Let $h_3$ be the equivariant map
\[h_3\colon \Lambda^{1,1}_0\to T^*\otimes W_3, \qquad e^{i,n+j}\mapsto  \sum_k e^k\otimes  e^i\otimes e^{n+k,n+j}-\frac1n (e^i\otimes e^k\otimes e^{n+k,n+j}),\]
and suppose that $\nabla \tau_3 =h_3(e^{n,n+1})$; then 
\[\nabla f_3 = 2e^k\otimes e^{n+k,n,n+1} +\frac 2n e^n\otimes F\wedge e^{n+1}.\]
Using \eqref{eqn:exteriorcovariant} we have
\begin{equation*}
\begin{aligned}
d(f_3) &=\mathfrak{a}(\nabla f_3)-\partial(\tau+\tilde\lambda)\hook f_3\\
&= 2\frac{n+1}n F\wedge e^{n,n+1}-\partial(\tau)\hook f_3+\lambda\wedge f_3,
\end{aligned} 
\end{equation*} 
and an easy computation shows:
\[\Lambda(df_3+\partial(\tau_7+\tau_8)\hook f_3-\lambda^{1,0}\wedge f_3)^{2,2} =  2\frac{n+1}n e^{n,n+1}.\]
On the other hand we have:
\[\ric(\mathfrak{a}(\nabla\tau_3))= \frac{(2-n)(n+1)}n e^n\otimes e^{n+1};\]
thus,
\[[\ric(\mathfrak{a}(\nabla\tau_3))]_{V^1_1} = \frac{2-n}{2}\Lambda(df_3+\partial(\tau_7+\tau_8)\hook f_3-\lambda^{1,0}\wedge f_3)^{2,2}. \]
By the same token, if $\lambda^{1,0}\otimes \tau_3 = h_3(e^{n,n+1})$, we see that
\[\lambda^{1,0}\wedge f_3=2\frac{n+1}n F\wedge e^{n,n+1},\]
so
\[[\ric (\lambda^{1,0}\wedge \tau_3)]_{V_1^1} = \frac{(2-n)(n+1)}n  \frac{n}{2(n+1)} \Lambda(\lambda^{1,0}\wedge f_3).\]
Summing up, we get
\[[\ric(\mathfrak{a}(\nabla\tau_3)+\lambda^{1,0}\wedge \tau_3)]_{V_1^1} = 
\frac{2-n}{2}\Lambda(df_3+\partial(\tau_7+\tau_8)\hook f_3)^{2,2},\]
which proves the first of the two equations. By considering the equivariant map
\[h_7\colon \Lambda^{1,1}_0\to T^*\otimes W_7, \quad e^{i,n+j}\mapsto  \sum_k e^{n+k}\otimes e^{n+i}\otimes e^{k,j}-\frac1n (e^{n+i}\otimes e^{n+k}\otimes e^{k,j})\]
and using a similar procedure, the other equation  follows. 
\end{proof}

Using Lemmas \ref{lemma:f2f6equations} and \ref{lemma:ric'nonabla} we are able to restate Theorems \ref{thm:ricci} and \ref{thm:RicciS2}, expressing the Ricci curvature of the Levi-Civita connection without using the covariant derivative $\nabla\tau$. Note that the projection of $\ric ( -\partial(\tau_7)\hook\tau_5 -\partial(\tau_3)\hook \tau_3)$ on tensors of type $(2,0)$ is  now redundant and we can drop it; the same happens for the $S^2V^*$  component of the Ricci tensor.
\begin{theorem}\label{thm:restateRicViaForms}
The Ricci tensor of an $\SL(n,\R)$-structure satisfies:
\begin{equation*}
\begin{split}
\ric'&=(2-n)[\Lambda(df_3+\partial(\tau_7+\tau_8)\hook f_3)]_{V^1_1}+ 2(n-2)df_4^{1,1} -2(n-1) f_4\wedge f_8\\
&\!\!\!+( 2n\Lambda(df_4)  -4(n-1)\langle f_4,f_8\rangle ) F-10F(\tau_1,\tau_5)+2F(\tau_1,\tau_6)-4F(\tau_2,\tau_5)\\
&\!\!\!-2F(\tau_2,\tau_6)+2\overline F(\tau_2,\tau_6)-2F(\tau_3,\tau_7+\tau_8)-2(n-1)F(\tau_4,\tau_7)+n(d\lambda)^{1,1},\\
[\ric'']&_{S^2V} = \epsilon \Bigl((n-1)([(-1)^{n-1}d(f_4\hook\alpha)]_{S^2V} + \frac12n\lambda^{0,1}\odot f_4 + (n-2)f_4\otimes f_4 )\\
&+[df_6+n\lambda^{1,0}\wedge f_6]_{S^2V}+\ric ( -\partial(\tau_7)\hook\tau_5 -\partial(\tau_3)\hook \tau_3)\Bigr),\\
[\ric'']&_{S^2V^*}\!=\! \epsilon \Bigl((n-1)([ (-1)^{n-1}d(f_8\hook\beta)]_{S^2V^*} - \frac12n\lambda^{1,0}\odot f_8 + (n-2)f_8\otimes f_8) \\
&+[df_2-n\lambda^{0,1}\wedge f_2]_{S^2V^*}+\ric (-\partial(\tau_3)\hook\tau_1 -\partial(\tau_7)\hook \tau_7)\Bigr).
\end{split}
\end{equation*}
\end{theorem}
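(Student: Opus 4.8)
The plan is to feed the conclusions of Lemmas~\ref{lemma:f2f6equations} and \ref{lemma:ric'nonabla} into the formulas of Theorems~\ref{thm:ricci} and \ref{thm:RicciS2}, replacing term by term every occurrence of a covariant derivative of the torsion by the exterior derivative of one of the forms $f_2,f_3,f_4\hook\alpha,f_6,f_8\hook\beta$, up to algebraic corrections in the $\tau_i$ and $\lambda$. After Theorems~\ref{thm:ricci} and \ref{thm:RicciS2} the remaining $\nabla$-dependence sits in exactly four places: the summand $\ric'(\mathfrak{a}(\nabla\tau_3)+\lambda^{1,0}\wedge\tau_3)$ in $\ric'$; the summands $(n-1)\nabla f_4$ and $\mathfrak{a}\nabla\tau_6$ in $[\ric'']_{S^2V}$; and their mirrors $(n-1)\nabla f_8$, $\mathfrak{a}\nabla\tau_2$ in $[\ric'']_{S^2V^*}$. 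Each of these is governed by one of the two lemmas.

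For $\ric'$ I would first observe that $\mathfrak{a}(\nabla\tau_3)+\lambda^{1,0}\wedge\tau_3$ depends equivariantly on a section of the bundle associated with $T^*\otimes W_3$, and that this $\GL(n,\R)$-module contains no trivial summand; hence its image under $\ric'$ lies entirely in the $V^1_1$-isotypic part of $V^*\otimes V$, so it equals its own $V^1_1$-component. Lemma~\ref{lemma:ric'nonabla} then replaces $2\ric'(\mathfrak{a}(\nabla\tau_3)+\lambda^{1,0}\wedge\tau_3)$ by $(2-n)[\Lambda(df_3+\partial(\tau_7+\tau_8)\hook f_3)]_{V^1_1}$. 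All the other summands of Theorem~\ref{thm:ricci} are already algebraic in the $\tau_i$ or are built from $df_4$, $\Lambda(df_4)$ and $(d\lambda)^{1,1}$, so they are carried over unchanged, and the first displayed identity follows.

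For $[\ric'']_{S^2V}$ I would start from Theorem~\ref{thm:RicciS2} and regroup the argument of $\ric(\cdot)^{0,2}$ as $\bigl(\mathfrak{a}\nabla\tau_6-\partial(\tau_7+\tau_8)\hook\tau_6+3\lambda^{1,0}\wedge\tau_6\bigr)-\partial(\tau_7)\hook\tau_5-\partial(\tau_3)\hook\tau_3$, using $\partial(\tau_7)\hook(\tau_5+\tau_6)+\partial(\tau_8)\hook\tau_6=\partial(\tau_7)\hook\tau_5+\partial(\tau_7+\tau_8)\hook\tau_6$. The first identity of Lemma~\ref{lemma:f2f6equations} converts the bracketed group, on its $S^2V$-part, into $[df_6+n\lambda^{1,0}\wedge f_6]_{S^2V}$, while the $f_4$-identity of the same lemma rewrites $(n-1)(\nabla f_4-f_4\otimes f_4+\lambda^{0,1}\otimes f_4)$ as $(n-1)\bigl([(-1)^{n-1}d(f_4\hook\alpha)]_{S^2V}+\frac12 n\lambda^{0,1}\odot f_4+(n-2)f_4\otimes f_4\bigr)$. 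Collecting everything under $\epsilon$ yields the second identity; along the way one checks that $\partial(\tau_7)\hook\tau_5$ and $\partial(\tau_3)\hook\tau_3$ reach $S^2T^*$ only through $S^2V$, which is why the superscript $(0,2)$ on that summand may be suppressed. The $S^2V^*$ formula I would then get either by the mirror computation with the $f_2$- and $f_8$-identities of Lemma~\ref{lemma:f2f6equations}, or directly from the involution $\sigma$ interchanging $V$ and $H$ (under which $\tau_2\leftrightarrow\tau_6$, $f_2\leftrightarrow f_6$, $f_4\leftrightarrow f_8$, $\alpha\leftrightarrow\beta$, $\lambda^{1,0}\leftrightarrow-\lambda^{0,1}$), as in the proof of Theorem~\ref{thm:RicciS2}.

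The whole computation is mechanical once the two lemmas are at hand; the one point that demands care — and the most likely place to slip — is the representation-theoretic bookkeeping: at each substitution one must verify which irreducible $\GL(n,\R)$-summands can actually contribute to the Ricci component under consideration, so that terms lying in non-contributing modules, and the redundant type projections, may be inserted or dropped without altering the identity. The genuinely delicate part, namely expressing $\mathfrak{a}\nabla\tau_i$ through the $df_i$, has already been carried out in Lemmas~\ref{lemma:f2f6equations} and \ref{lemma:ric'nonabla}.
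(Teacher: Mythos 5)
Your proposal is correct and follows essentially the same route as the paper: Theorem~\ref{thm:restateRicViaForms} is obtained precisely by substituting Lemmas~\ref{lemma:f2f6equations} and \ref{lemma:ric'nonabla} into Theorems~\ref{thm:ricci} and \ref{thm:RicciS2}, using the regrouping $\partial(\tau_7)\hook(\tau_5+\tau_6)+\partial(\tau_8)\hook\tau_6=\partial(\tau_7)\hook\tau_5+\partial(\tau_7+\tau_8)\hook\tau_6$ and the equivariance/weight argument showing that the residual terms and the type projections are redundant. (Only a cosmetic slip: you announce ``four'' remaining $\nabla$-dependent summands but then correctly list five.)
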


\begin{remark}
In the notation of \cite{HarveyLawson}, a $D$-valued $(n,0)$-form has the form
\[\Phi=(\overline e p + e q)(\overline e \alpha+ e\beta) = \overline{e}(p\alpha) + e(q\beta) .\]
Assume that the structure is paracomplex; then this is holomorphic if and only if it is closed; this is equivalent to 
\[d(\log p)^{0,1} = n\lambda^{0,1}+(n-1)f_4, \quad d(\log q)^{1,0} = -n\lambda^{1,0}+(n-1)f_8.\]
By \cite{HarveyLawson}, in the parak\"ahler case the Ricci form is given by
\[\ric=-\tau \partial\overline{\partial} \log \abs{\Phi}^2 = -\partial_+\partial_-\log \abs{\Phi}^2 .\]
In our case, $\abs{\Phi}^2=pq$, hence
\begin{align*} 
-\partial_+\partial_-  pq &= -\partial_+(n\lambda^{0,1}-(n-1)f_4))+ \partial_-(-n\lambda^{1,0}+(n-1)f_8) \\
&= (-nd\lambda + (n-1)(-df_4+df_8))^{1,1}.
\end{align*}
This is one component in our expression for the Ricci which corresponds to the curvature of the canonical bundle. In the parak\"ahler case, i.e. $\tau=0$, the expressions coincide.
\end{remark}

As an application, we recover a result of \cite{IvanovZamkovoy:parahermitian}, asserting that six-dimensional nearly parak\"ahler structures are Einstein. Recall that an almost parak\"ahler manifold is called nearly parak\"ahler if $\nabla K$ is skew-symmetric in the first two indices; in our language, this means that the intrinsic torsion lies in $W_1+W_5$ (see e.g. \cite{GadeaMasque}). From the paracomplex point of view, this means that under some identification $V\cong H^*$, the two components of the Nijenhuis tensor lie in
\[\Lambda^3V^*+\Lambda^3H^*;\]
i.e.,  the Nijenhuis tensor is totally skew-symmetric; this is the condition that ensures the existence of a connection with skew-symmetric torsion \cite{IvanovZamkovoy:parahermitian}. 
With this definition, a parak\"ahler manifold is nearly parak\"ahler; we shall say a structure is \emph{strictly nearly parak\"ahler} if $\tau_1\neq0$ at each point. It was shown in \cite{SchaferSchulteHengesbach} that six-dimensional strictly nearly parak\"ahler structures can be characterized in terms of differential forms and exterior derivatives. In our language, we obtain:
\begin{corollary}
\label{cor:nk}
On a connected six dimensional manifold $M$, an almost parahermitian structure with $\tau_1\neq0$ is strictly nearly parak\"ahler if and only if it has a reduction to $\SL(3,\R)$ such that, up to rescaling by a constant,
either
\begin{equation*}
dF=\alpha + \beta,\quad  d(\alpha-\beta)=-\frac13F^2
\end{equation*}
or
\begin{equation*}
 dF=\alpha,\quad  d\beta=\frac16F^2.
\end{equation*}
The metric is Einstein with $s=-\frac5{18}$ in the former case, and Ricci-flat in the latter.
\end{corollary}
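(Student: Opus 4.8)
\emph{Overall strategy.} The plan is to use Proposition~\ref{prop:forms} to translate the nearly parak\"ahler condition into a system of equations on $dF$, $d\alpha$, $d\beta$, and then substitute the resulting data into Theorems~\ref{thm:ricci} and \ref{thm:RicciS2}.

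\emph{Setting up the reduction.} Being nearly parak\"ahler means the $\GL(3,\R)$-intrinsic torsion lies in $W_1+W_5$ at every point; equivalently $\tau=\tau_1+\tau_5$, so that $\tau_2=\tau_3=\tau_4=\tau_6=\tau_7=\tau_8=0$ and hence also $f_2=f_3=f_4=f_6=f_7=f_8=0$. By Proposition~\ref{prop:forms} this already forces $(dF)^{2,1}=(dF)^{1,2}=0$, so $dF=(dF)^{3,0}+(dF)^{0,3}$. Since $n=3$, the space $W_1\cong\Lambda^{3,0}$ is one-dimensional, so $\tau_1$ is nowhere zero exactly when the globally defined $(3,0)$-form $(dF)^{3,0}=-\partial(\tau_1)\hook F$ is; in that case it defines a reduction to $\SL(3,\R)$, and I would normalize the reduction so that $(dF)^{3,0}=\alpha$, i.e.\ $\tau_1=\tfrac16 e^i\otimes e_i\hook\alpha$ is the standard generator of $W_1$.

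\emph{The exterior equations.} Write $(dF)^{0,3}=h\beta$. Then Proposition~\ref{prop:forms} gives $dF=\alpha+h\beta$ together with $(d\alpha)^{3,1}=-3\lambda^{0,1}\wedge\alpha$, $(d\beta)^{1,3}=3\lambda^{1,0}\wedge\beta$, $(d\alpha)^{2,2}=-\partial(\tau_5)\hook\alpha$, $(d\beta)^{2,2}=-\partial(\tau_1)\hook\beta$, and a direct computation on the flat model yields $-\partial(\tau_1)\hook\beta=\tfrac16 F^2$ and $-\partial(\tau_5)\hook\alpha=-\tfrac h6 F^2$. Now I would exploit $d^2=0$. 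The $\Lambda^{3,1}$-component of $d^2F$ is $(d\alpha)^{3,1}=-3\lambda^{0,1}\wedge\alpha$, so $\lambda^{0,1}=0$; hence $d\alpha=-\tfrac h6 F^2$ and $0=d^2\alpha=-\tfrac16\,dh\wedge F^2$ (using $F\wedge\alpha=F\wedge\beta=0$). Since wedging with $F^2$ is injective on one-forms (pointwise symplectic linear algebra, $F$ being non-degenerate), $dh=0$, so $h$ is constant on the connected manifold. Similarly the $\Lambda^{3,2}$-component of $d^2\beta$ is a nonzero multiple of $\lambda^{1,0}\wedge F^2$, forcing $\lambda^{1,0}=0$. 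Thus $\lambda=0$, $d\alpha=-\tfrac h6 F^2$, $d\beta=\tfrac16 F^2$. If $h\neq 0$, replacing $(F,\alpha,\beta)$ by $(hF,h\alpha,h^2\beta)$ — a constant rescaling preserving the $\SL(3,\R)$-structure relations — normalizes to $dF=\alpha+\beta$, $d(\alpha-\beta)=-\tfrac13 F^2$; if $h=0$ we have $dF=\alpha$, $d\alpha=0$, $d\beta=\tfrac16 F^2$. Conversely, in either case one recovers $d\alpha$ and $d\beta$ explicitly (using $d^2F=0$ in the first case), and the inverse formulae of Proposition~\ref{prop:forms} then give $\tau_2=\tau_3=\tau_4=\tau_6=\tau_7=\tau_8=0$ and $\tau_1=\tfrac16 e^i\otimes e_i\hook\alpha\neq 0$, so the structure is strictly nearly parak\"ahler.

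\emph{The Ricci tensor.} With $\tau=\tau_1+\tau_5$, $\lambda=0$, and all $f_i$ with $i\neq 1,5$ vanishing, Theorem~\ref{thm:ricci} collapses to $\ric'=-10\,F(\tau_1,\tau_5)$; and every term of Theorem~\ref{thm:RicciS2} involves one of $f_4,f_8,\lambda,\tau_2,\tau_3,\tau_6,\tau_7,\tau_8$, so $\ric''=0$. A computation with the standard generators gives $F(\tau_1,\tau_5)=-2\,a_{ijk}b_{hjk}\,e^i\wedge e^{n+h}=\tfrac h{36}\,F$, which under \eqref{eqn:identify11} equals $\tfrac h{36}\,g$; hence $\ric=-\tfrac{5h}{18}\,g$. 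So in the first (normalized, $h=1$) case the metric is Einstein with $s=-\tfrac5{18}$, and in the second ($h=0$) it is Ricci-flat.

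The conceptual steps are routine; I expect the actual labor to concentrate in the second part: keeping track of the $(p,q)$-bidegrees in $d^2F$, $d^2\alpha$, $d^2\beta$, the elementary injectivity of $\wedge F^2$ on one-forms, and above all the two explicit constants $-\partial(\tau_1)\hook\beta=\tfrac16 F^2$ and $F(\tau_1,\tau_5)=\tfrac1{36}F$, which require a bare-hands computation on the model $\R^6$.
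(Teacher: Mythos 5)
Your proof is correct and follows essentially the same route as the paper: translate the $\mathcal{W}_1+\mathcal{W}_5$ condition into equations on $dF$, $d\alpha$, $d\beta$ via Proposition~\ref{prop:forms}, use $d^2=0$ to force $\lambda=0$ and $h$ constant, and then read off $\ric=-\tfrac{5h}{18}g$ from Theorems~\ref{thm:ricci} and~\ref{thm:RicciS2}. If anything, your argument is slightly more complete than the paper's, which only records $d\lambda=0$ in the $h\neq 0$ case and leaves the constancy of $h$ (your $d^2\alpha=0\Rightarrow dh\wedge F^2=0$ step, needed to justify "up to rescaling by a constant") implicit.
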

\begin{proof}
It is clear from Proposition~\ref{prop:forms} that the two situations correspond to the intrinsic torsion classes $W_1+W_5$ or $W_1$.

Conversely, if the intrinsic torsion lies in $W_1+W_5$, with $\tau_1\neq0$, necessarily $dF$ has a component of type $(3,0)$; we can choose the reduction so that it equals $\alpha$. Then
\begin{equation*}
dF=\alpha + k\beta,\quad  (d\beta)^{2,2}=\frac16F^2,
\end{equation*}
for some function $k$. If $k$ is not zero at some point, we have
\[(d\alpha)^{3,1}=0,\quad (d\alpha)^{2,2}=-\frac{k}6F^2,\quad (d\beta)^{1,3}=-d(\log k)\wedge \beta.\]
This implies that $d\lambda = 0$. Therefore, the metric is Einstein with curvature $-\frac{5k}{18}$; up to rescaling we can assume $k\equiv 1$.

Otherwise $k$ is identically zero; this implies that
\[0=d^2\beta =d(3\lambda\wedge\beta + \frac16F^2)= 3d\lambda\wedge \beta - \frac12\lambda\wedge  F^2;\]
since $\lambda$ has type $(1,0)$, this implies that $\lambda=0$. Hence, the metric is Ricci-flat.
\end{proof}
The six-dimensional case is also special because a nearly parak\"ahler $6$-manifold gives rise to a nearly parallel $\Gtwo^*$-structure on a suitable warped product (\cite{CortesLeistner}).

\section{The Bott-Chern class}
Whilst the component $\tau$ of the intrinsic torsion depends only on the $\GL(n,\R)$-structure, the component $\lambda$ only depends on the $\SL(n,\R)\times \SL(n,\R)$-structure. In analogy with the complex case, it is possible to define an invariant that does not depend on a metric, playing the same role as the first Chern class.

Recall that the (para)Bott-Chern class cohomology spaces are defined as
\[H^{p,q}_{BC} =\frac{ \ker d\colon\Omega^{p,q}\to \Omega^{p+1,q}\oplus\Omega^{p,q+1} }{\im dd^c\colon \Omega^{p-1,q-1}\to \Omega^{p,q}},\]
where $d^c=K\circ d\circ K$. They are not finite-dimensional in general \cite{AngellaRossi:cohomology2012}. 

\begin{proposition}
\label{prop:modlambda}
Let $(M,K)$ be a paracomplex manifold that admits a reduction to $\SL(n,\R)$, say $(F,\alpha,\beta)$. If $(\tilde F,\tilde\alpha, \tilde\beta)$ is another reduction to $\SL(n,\R)$ with 
\[\tilde \alpha = e^{n(h+k)} \alpha, \quad \tilde\beta=e^{n(h-k)}\beta,\]
then its intrinsic torsion satisfies
\[\tilde\lambda = \lambda + d^ch -dk.\]
In particular, the Bott-Chern class $[d\lambda]$ only depends on $K$.
\end{proposition}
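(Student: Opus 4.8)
The strategy is to compute directly how the one-form $\lambda$ transforms under the change of reduction, and then invoke the inverse formulas of Proposition~\ref{prop:forms} expressing $\lambda$ in terms of the exterior derivatives of $\alpha$ and $\beta$. First I would record that the new $(n,0)$- and $(0,n)$-forms are conformal rescalings $\tilde\alpha=e^{n(h+k)}\alpha$ and $\tilde\beta=e^{n(h-k)}\beta$, while $\tilde F=F$ and the paracomplex structure $K$ is unchanged. Since $K$ is fixed, the decomposition into $(p,q)$-types, the operator $d^c=K\circ d\circ K$, and the component $\tau$ of the intrinsic torsion (which depends only on the $\GL(n,\R)$-structure, hence only on $K$ once $F$ is fixed) are all unchanged; in particular $f_4,f_8$ are unchanged. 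So the only thing that can change is $\lambda$.

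Next I would use the inverse formula from Proposition~\ref{prop:forms},
\[
\lambda^{0,1} = -\frac1n \pi_{0,1}(d\alpha)^{n,1}-\frac{n-1}{n}f_4,\qquad
\lambda^{1,0} = \frac1n \pi_{1,0}(d\beta)^{1,n}+\frac{n-1}{n}f_8,
\]
and compute the analogous quantities for $\tilde\alpha,\tilde\beta$. From $\tilde\alpha=e^{n(h+k)}\alpha$ one gets $d\tilde\alpha = n\,d(h+k)\wedge\alpha + e^{n(h+k)}d\alpha$, so the $(n,1)$-component picks up an extra term $n\,(d(h+k))^{0,1}\wedge\alpha$; applying $\pi_{0,1}$ (the isomorphism $\Lambda^{n,1}\to\Lambda^{0,1}$ induced by $v\mapsto v\wedge\alpha$, which is the reason the conformal factor $e^{n(h+k)}$ cancels against the $\tfrac1n$ and against the scaling of $\alpha$ itself) yields
\[
\tilde\lambda^{0,1} = \lambda^{0,1} - (d(h+k))^{0,1} = \lambda^{0,1} - (dh)^{0,1} - (dk)^{0,1}.
\]
Similarly, from $\tilde\beta=e^{n(h-k)}\beta$ and the formula for $\lambda^{1,0}$,
\[
\tilde\lambda^{1,0} = \lambda^{1,0} + (d(h-k))^{1,0} = \lambda^{1,0} + (dh)^{1,0} - (dk)^{1,0}.
\]
Now one observes that for a function $h$, $d^c h = K\,d\,K h = K\,dh$ acts on the $(1,0)$- and $(0,1)$-parts of $dh$ by $+1$ and $-1$ respectively (since $K=\id$ on $V^*$ and $K=-\id$ on $H^*$), so $(d^ch)^{1,0}=(dh)^{1,0}$ and $(d^ch)^{0,1}=-(dh)^{0,1}$. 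Assembling the two pieces gives $\tilde\lambda = \tilde\lambda^{1,0}+\tilde\lambda^{0,1} = \lambda + (dh)^{1,0} - (dh)^{0,1} - (dk)^{1,0} - (dk)^{0,1} = \lambda + d^ch - dk$, which is the claimed transformation rule.

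For the final assertion, I would note that $d(d^ch-dk) = dd^ch - d\,dk = dd^ch$, and $dd^ch$ lies in the image of $dd^c\colon\Omega^{0,0}\to\Omega^{1,1}$, so $d\tilde\lambda - d\lambda = dd^ch$ is exact in the Bott--Chern sense; moreover $d\lambda$ is of type $(1,1)$ (this follows from the expression for $\omega^{LC}$ and the fact that the $\R$-component of the Levi-Civita curvature is $(d\lambda-\tfrac1n F(\tau,\tau))\otimes\mathrm{diag}(I,-I)$ together with the first Bianchi identity, or directly from $d\lambda$'s role in Lemma~\ref{lemma:ricprime}; in any case only the $(1,1)$-part enters the invariant), and $d\tilde\lambda$ is closed since $d(d\tilde\lambda)=0$. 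Hence $[d\lambda]\in H^{1,1}_{BC}$ is independent of the choice of $\SL(n,\R)$-reduction, and since any two reductions compatible with a fixed $(M,K,F)$ differ by such an $(h,k)$ — and changing $F$ within its conformal class is not allowed for an $\SL(n,\R)$-structure, while the problem statement fixes $K$ and lets $(F,\alpha,\beta)$ vary only through the stated rescaling — the class $[d\lambda]$ depends only on $K$. The main obstacle is bookkeeping: correctly tracking which conformal factors cancel under $\pi_{0,1},\pi_{1,0}$, keeping the $(1,0)$/$(0,1)$ signs of $d^c$ straight, and being careful that the $f_4,f_8$ terms in the inverse formulas genuinely do not contribute to the change (which is where one uses that $\tau$, hence $f_4$ and $f_8$, depends only on the $\GL(n,\R)$-structure $(K,F)$ and not on $\alpha,\beta$).
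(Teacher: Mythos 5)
Your argument is essentially the paper's own proof: the paper likewise applies the Leibniz rule to $d(e^{n(h\pm k)}\alpha)$ and $d(e^{n(h\pm k)}\beta)$, reads off $\tilde\lambda^{0,1}$ and $\tilde\lambda^{1,0}$ from the formulas of Proposition~\ref{prop:forms} (you use the inverse formulas, the paper the direct ones — the same equations read in the other direction), and assembles the result via $d^ch=(dh)^{1,0}-(dh)^{0,1}$. The only caveat is your assertion that $\tilde F=F$, which is literally incompatible with $h\neq0$ (since $\tilde F^n=e^{2nh}F^n$); but the role it plays — ensuring the $f_4,f_8$ terms drop out of the comparison — is exactly the implicit omission already present in the paper's two-line proof, so you have introduced no gap beyond what the paper itself accepts.
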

\begin{proof}
Straightforward from Proposition \ref{prop:forms}:
\begin{gather*}
-n\tilde\lambda\wedge\tilde \alpha = d(\tilde\alpha ) = n(dh+dk)\wedge \tilde\alpha -n \lambda^{0,1}\wedge \tilde\alpha\\
n\tilde\lambda\wedge\tilde \beta = d(\tilde\beta ) = n(dh-dk)\wedge \tilde\beta +n\lambda^{1,0}\wedge \tilde\beta.\qedhere\\ 
\end{gather*}
\end{proof}

\begin{proposition}
Let $(M,K)$ be a paracomplex manifold which admits a reduction to $\SL(n,\R)$. If $(M,K)$ admits a compatible  Ricci-flat parak\"ahler metric, then $[d\lambda]=0$ in $H^{1,1}_{BC}$.
\end{proposition}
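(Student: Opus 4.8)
The plan is to exhibit, for a Ricci-flat parak\"ahler metric, a reduction to $\SL(n,\R)$ whose intrinsic torsion component $\lambda$ is closed, and then invoke Proposition~\ref{prop:modlambda}, which guarantees that the Bott--Chern class $[d\lambda]\in H^{1,1}_{BC}$ is independent of the chosen reduction.

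First I would argue that a parak\"ahler metric gives an $\SL(n,\R)$-structure with $\tau=0$. Indeed, parak\"ahler means $\nabla F=0$, so the $\GL(n,\R)$-intrinsic torsion vanishes; equivalently, all eight components $\tau_1,\dots,\tau_8$ are zero. The minimal connection then coincides with the Levi-Civita connection up to the $\R$-valued piece $\tilde\lambda$, and the Ricci tensor reduces drastically. From Theorem~\ref{thm:ricci} (or Theorem~\ref{thm:restateRicViaForms}), with every $\tau_i=0$ and $f_3=f_4=f_8=0$, the formula collapses to $\ric'=n(d\lambda)^{1,1}$, while Theorem~\ref{thm:RicciS2} gives $[\ric'']_{S^2V}=\epsilon\bigl((n-1)\lambda^{0,1}\otimes f_4+\dots\bigr)=0$ and similarly $[\ric'']_{S^2V^*}=0$, since $f_4=f_8=0$. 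Hence Ricci-flatness forces $(d\lambda)^{1,1}=0$.

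Next I would dispose of the remaining components of $d\lambda$. Since $\lambda$ is a genuine one-form of mixed type $\lambda=\lambda^{1,0}+\lambda^{0,1}$, its exterior derivative has components of types $(2,0)$, $(1,1)$, $(0,2)$. In the parak\"ahler (integrable) case, $d$ maps $\Lambda^{1,0}\to\Lambda^{2,0}\oplus\Lambda^{1,1}$ and $\Lambda^{0,1}\to\Lambda^{1,1}\oplus\Lambda^{0,2}$, so $(d\lambda)^{2,0}=d\lambda^{1,0}$ (restricted to the $(2,0)$-part) and $(d\lambda)^{0,2}=(d\lambda^{0,1})^{0,2}$. To control these I would use Proposition~\ref{prop:forms}: with $\tau=0$ and $f_4=f_8=0$ one has $d\alpha=-n\lambda^{0,1}\wedge\alpha$ and $d\beta=n\lambda^{1,0}\wedge\beta$. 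Differentiating $d\alpha=-n\lambda^{0,1}\wedge\alpha$ gives $0=-n\,d\lambda^{0,1}\wedge\alpha+n\lambda^{0,1}\wedge d\alpha = -n\,d\lambda^{0,1}\wedge\alpha$, because $\lambda^{0,1}\wedge d\alpha = \lambda^{0,1}\wedge(-n\lambda^{0,1}\wedge\alpha)=0$. Since wedging with $\alpha$ is injective on $\Lambda^{2,0}$ (as in the isomorphisms $\pi_{0,1},\pi_{1,0}$ preceding Proposition~\ref{prop:forms}), the $(2,0)$-part of $d\lambda^{0,1}$ — equivalently $(d\lambda)^{0,2}$ written via the complementary distribution — vanishes; symmetrically, differentiating $d\beta=n\lambda^{1,0}\wedge\beta$ kills $(d\lambda)^{2,0}$. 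Combining with $(d\lambda)^{1,1}=0$ yields $d\lambda=0$, so $[d\lambda]=0$ in $H^{1,1}_{BC}$ for this particular reduction, and by Proposition~\ref{prop:modlambda} the class vanishes outright.

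The main obstacle I anticipate is bookkeeping rather than conceptual: one must be careful that ``parak\"ahler'' is being used for the metric structure while the Bott--Chern class lives on the paracomplex manifold, so the reduction to $\SL(n,\R)$ must be the one compatible with the given metric and with the chosen paracomplex volume forms, and one must check that Proposition~\ref{prop:forms} applies (it does, since the metric provides an $\SL(n,\R)$-structure, automatically inducing $\alpha$, $\beta$). A secondary subtlety is verifying that the $S^2V$ and $S^2V^*$ parts of $\ric''$ genuinely vanish and are not merely ``of lower order''; but inspecting Theorem~\ref{thm:RicciS2} with all $\tau_i=0$, $f_4=f_8=0$, every surviving term carries a factor of $f_4$ or $f_8$ or a $\tau_i$, so $\ric''\equiv0$ with no hypothesis needed — consistent with the known fact that parak\"ahler Ricci curvature is purely of type $(1,1)$. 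Thus the only nontrivial input is the collapse of Theorem~\ref{thm:ricci} to $\ric'=n(d\lambda)^{1,1}$ together with the two differentiated identities from Proposition~\ref{prop:forms}.
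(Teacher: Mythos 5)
Your argument is correct and follows the paper's own proof: in the parak\"ahler case all $\tau_i$ vanish, the Ricci formula collapses to $\ric'=n(d\lambda)^{1,1}$, Ricci-flatness forces this to vanish, and Proposition~\ref{prop:modlambda} transfers the conclusion to any other reduction. Your additional check that $(d\lambda)^{2,0}$ and $(d\lambda)^{0,2}$ also vanish is sound and left implicit in the paper, though watch the type bookkeeping: wedging with $\alpha\in\Lambda^{n,0}$ annihilates $\Lambda^{2,0}$ and is injective on $\Lambda^{0,2}$, so $d^2\alpha=0$ kills the $(0,2)$-part of $d\lambda^{0,1}$ (which is $(d\lambda)^{0,2}$), not its $(2,0)$-part as you wrote.
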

\begin{proof}
Theorem~\ref{thm:restateRicViaForms} gives $\ric'=d\lambda$. Since $[d\lambda]$ only depends on $K$, if a Ricci-flat parak\"ahler metric exists necessarily $d\lambda=0$. 
\end{proof}
We do not know whether an analogue of the Aubin-Yau theorem holds~\cite{Aubin,Yau}: Proposition~\ref{prop:modlambda} only tells us what the volume of a potential Ricci-flat parak\"ahler metric should be, but does not guarantee existence. 

On the other hand, we have the following:
\begin{proposition}
Let $(F,\alpha,\beta)$ be a paracomplex $\SL(n,\R)$-structure on $M$. Then $M$ admits a parak\"ahler-Einstein metric with $s\neq0$ compatible with the same paracomplex structure if and only if there exists a function $h$ such that
\[(d\lambda + dd^ch)^n = e^{2nh}F^n.\]
\end{proposition}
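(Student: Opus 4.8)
The idea is that the stated identity is the parak\"ahler analogue of the complex Monge--Amp\`ere equation for K\"ahler--Einstein metrics, and that its two sides match by Theorem~\ref{thm:restateRicViaForms}, which in the parak\"ahler case collapses to $\ric=n\,d\lambda$. I first record the simplifications this entails. When $F$ is closed the $\GL(n,\R)$-torsion $\tau$ vanishes (Proposition~\ref{prop:forms}), so $f_3=f_4=f_6=f_8=0$, $d\lambda$ is a closed $(1,1)$-form, and Theorem~\ref{thm:restateRicViaForms} reduces to $\ric'=n\,d\lambda$, $\ric''=0$; under the identification \eqref{eqn:identify11} of the $V^*\otimes V$ part of a symmetric tensor with $\Lambda^{1,1}$ (sending $g$ to $F$) this reads $\ric=n\,d\lambda$. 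A compatible parak\"ahler metric is thus the same thing as a closed nondegenerate $(1,1)$-form $\omega$ (such an $\omega$ has neutral signature because the $K$-eigendistributions are $\omega$-isotropic), and it is Einstein with constant $c\ne 0$ exactly when $\omega=\tfrac nc\,d\lambda_\omega$, where $\lambda_\omega$ is the $\lambda$-component of the torsion of any $\SL(n,\R)$-reduction $(\omega,\alpha_\omega,\beta_\omega)$. As $g\mapsto tg$ ($t\in\R^*$) preserves $\ric$ and replaces $c$ by $t^{-1}c$, I normalise once and for all to $c=n$, i.e. I look for $\omega$ with $\omega=d\lambda_\omega$. Finally, since $[d\lambda]$ depends only on $K$ (Proposition~\ref{prop:modlambda}), I may assume the given $(F,\alpha,\beta)$ is itself parak\"ahler, so that Propositions~\ref{prop:forms} and~\ref{prop:modlambda} apply in the simple form above.

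For the ``if'' direction, given $h$ with $(d\lambda+dd^ch)^n=e^{2nh}F^n$, I set $\omega:=d\lambda+dd^ch$. This form is exact, of type $(1,1)$, and nondegenerate because $\omega^n=e^{2nh}F^n$ is nowhere zero; hence it is a parak\"ahler form compatible with $K$. I then take the reduction $(\omega,\,e^{nh}\alpha,\,e^{nh}\beta)$, which is admissible precisely because $\omega^n=e^{2nh}F^n$ equals the required multiple of $(e^{nh}\alpha)\wedge(e^{nh}\beta)$. By Proposition~\ref{prop:modlambda} (with $k=0$) its $\lambda$-component is $\lambda_\omega=\lambda+d^ch$, so $d\lambda_\omega=d\lambda+dd^ch=\omega$, and Theorem~\ref{thm:restateRicViaForms} gives $\ric=n\,d\lambda_\omega=n\,\omega$: an Einstein metric with $s\ne 0$ compatible with the same paracomplex structure.

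For the ``only if'' direction, let $g$ be such a metric; after the rescaling above its fundamental form $\omega=F_g$ is closed, nondegenerate, of type $(1,1)$, and $\omega=d\lambda_\omega$ for every $\SL(n,\R)$-reduction $(\omega,\alpha_\omega,\beta_\omega)$. Writing $\alpha_\omega=\psi\alpha$, $\beta_\omega=\chi\beta$ with $\psi,\chi$ nowhere zero, admissibility forces $\psi\chi=\omega^n/F^n$; using the freedom $\psi\mapsto e^{n\sigma}\psi$, $\chi\mapsto e^{-n\sigma}\chi$ (which leaves $d\lambda_\omega$ unchanged) and, if necessary, replacing $g$ by $-g$ and reversing orientations so that this ratio is positive, I may take $\psi=\chi=e^{nh}$, so that $\omega^n=e^{2nh}F^n$. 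Proposition~\ref{prop:modlambda} then gives $\lambda_\omega=\lambda+d^ch$, whence $\omega=d\lambda_\omega=d\lambda+dd^ch$, and raising to the $n$-th power yields $(d\lambda+dd^ch)^n=\omega^n=e^{2nh}F^n$.

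The substantive inputs are Theorem~\ref{thm:restateRicViaForms} (parak\"ahler $\ric=n\,d\lambda$, which makes the Monge--Amp\`ere reformulation possible) and Proposition~\ref{prop:modlambda} (the transformation law of $\lambda$ under rescaling of $\alpha,\beta$); everything else---that $d\lambda+dd^ch$ is nondegenerate and underlies a genuine $\SL(n,\R)$-reduction, and the invocation of Theorem~\ref{thm:restateRicViaForms}---is a direct check. The only delicate point is the normalisation and sign bookkeeping: pinning the Einstein constant to $n$, writing the volume ratio $\omega^n/F^n$ as $e^{2nh}$, and checking that reducing to a parak\"ahler $(F,\alpha,\beta)$ at the outset leaves the solvability of the equation unaffected, all of which rest on Proposition~\ref{prop:modlambda}.
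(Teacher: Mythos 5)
Your argument is correct and follows essentially the same route as the paper: parametrize the compatible reductions by $(h,k)$ with $k$ irrelevant to the metric, use Proposition~\ref{prop:modlambda} to get $d\lambda_\omega=d\lambda+dd^ch$, and observe that the parak\"ahler--Einstein condition with $s\neq0$ is equivalent (after normalisation) to $\tilde F=d\tilde\lambda$ together with the volume constraint $\tilde F^n=e^{2nh}F^n$. Your additional bookkeeping on the Einstein constant and the sign of $\omega^n/F^n$ is more explicit than the paper's one-line treatment, but the substance is the same.
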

\begin{proof}
Any compatible $\SL(n,\R)$-structure has the form $(\tilde F,\tilde\alpha, \tilde\beta)$, with 
\[\tilde \alpha = e^{n(h+k)} \alpha, \quad \tilde\beta=e^{n(h-k)}\beta.\]
Since $k$ does not affect the metric, it is no loss of generality to assume $k=0$. Then from Proposition \ref{prop:forms} we have:
\[d\tilde\lambda = d \lambda + dd^ch.\]
Set $\tilde F=d\tilde\lambda$; then $(\tilde F,\tilde \alpha,\tilde\beta)$ defines an $\SL(n,\R)$-structure if and only if
\[\tilde F^n = e^{2nh}F^n.\]
If this condition holds, the structure is automatically parak\"ahler and Einstein, because $\tilde F$ is closed and because of Theorem~\ref{thm:ricci}.

By the same token, a compatible parak\"ahler-Einstein metric necessarily satisfies $\tilde F=d\tilde\lambda$.
\end{proof}
This condition is the analogue of the Monge-Amp\`ere equation in complex geometry. A striking difference is that the volume form is automatically exact; this means that it makes no sense to assume the manifold is compact. In fact,  \emph{compact} Einstein parak\"ahler manifolds are necessarily Ricci-flat:
\begin{proposition}
\label{prop:pkeinsteincompact}
If $M$ is a compact, parak\"ahler manifold, then 
\[\int_M s F^n =0.\] 
A left-invariant parak\"ahler structure on a unimodular Lie group has $s=0$.
\end{proposition}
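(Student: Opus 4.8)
The plan is to show that, when the structure is parak\"ahler, $s\,F^n$ is an \emph{exact} $2n$-form, and then to integrate. First I would specialise the Ricci formula to $\tau=0$. All the components $\tau_i$ of the intrinsic torsion, and hence all the auxiliary forms $f_1,\dots,f_8$, are built out of the $\GL(n,\R)$-part $\tau$, so they vanish in the parak\"ahler case; thus Corollary~\ref{cor:s} reduces to $s = n\,\Lambda(d\lambda)$ (and Theorem~\ref{thm:ricci} to $\ric'=n(d\lambda)^{1,1}$).

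The next step is to convert $s\,F^n$ into an exact form. For any $2$-form $\gamma$ the components $\gamma^{2,0}$ and $\gamma^{0,2}$ wedge to zero against $F^{n-1}$ for degree reasons, while the Lefschetz identity --- read off from the decomposition $\sigma=[\sigma]_0+F\wedge\Lambda(\sigma)$, in which $\Lambda F=1$ --- gives $\gamma^{1,1}\wedge F^{n-1}=\Lambda(\gamma)\,F^n$. Hence
\[s\,F^n = n\,\Lambda(d\lambda)\,F^n = n\,d\lambda\wedge F^{n-1}.\]
Since a parak\"ahler structure has $dF=0$, also $d(F^{n-1})=0$, so
\[s\,F^n = n\,d\lambda\wedge F^{n-1} = n\,d(\lambda\wedge F^{n-1}),\]
exhibiting $s\,F^n$ as the differential of the $(2n-1)$-form $n\,\lambda\wedge F^{n-1}$.

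To finish, I would integrate. Almost parahermitian manifolds are oriented by $F^n$, so if $M$ is compact Stokes' theorem gives $\int_M s\,F^n = n\int_M d(\lambda\wedge F^{n-1}) = 0$. For a left-invariant parak\"ahler structure on a unimodular Lie group $G$ of dimension $2n$, the Levi-Civita connection, and hence the minimal connection and $\lambda$, are left-invariant, so $\lambda\wedge F^{n-1}$ lies in $\Lambda^{2n-1}\lie{g}^*$; writing a left-invariant $(2n-1)$-form as $X\hook\nu$ for $X\in\lie{g}$ and $\nu$ a generator of $\Lambda^{2n}\lie{g}^*$, one has $d(X\hook\nu)=\Lie_X\nu=-(\tr\ad_X)\nu$, which vanishes by unimodularity; thus $d(\lambda\wedge F^{n-1})=0$ and $s\,F^n=0$, so $s=0$.

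The argument is essentially formal once the Ricci formula is available; the only delicate points are the normalisation constants in the Lefschetz identity $\gamma^{1,1}\wedge F^{n-1}=\Lambda(\gamma)F^n$ and in the identification $s=\Lambda\ric'=n\Lambda(d\lambda)$ in the parak\"ahler case. I do not foresee a genuine obstacle: the substantive observation is simply that for $\tau=0$ every term of the Ricci formula except $n(d\lambda)^{1,1}$ drops out, leaving a manifestly exact top form.
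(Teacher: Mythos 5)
Your proof is correct and follows essentially the same route as the paper: specialise Corollary~\ref{cor:s} to $\tau=0$ to get $\frac{1}{n}sF^n=d\lambda\wedge F^{n-1}$, observe this is exact since $F$ is closed, and conclude by Stokes' theorem in the compact case and by the absence of exact invariant volume forms in the unimodular case. The paper states this more tersely, but the normalisations and the Lefschetz-type identity you spell out are exactly what is implicit there.
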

\begin{proof}
By Corollary~\ref{cor:s}, $d\lambda\wedge F^{n-1} = \frac1n sF^n$; this form is exact because $F$ is symplectic. If $M$ is compact, Stokes' theorem gives the statement; the same argument applies to the case of a unimodular Lie group,  since there are no exact invariant volume forms.
\end{proof}
\begin{remark}
More generally, this argument applies to \emph{balanced} almost parak\"ahler structures, i.e. those where $F^{n-1}$ is closed; then, in the case of an invariant structure on a unimodular Lie group, $\Lambda(d\lambda)=0$ and $\tau_4=0=\tau_8$. Therefore, the expression for the scalar curvature reduces to 
\[
s=\frac{10}n\langle \tau_1,\tau_5\rangle -\frac2n \langle \tau_2,\tau_6\rangle-\frac2n \langle \tau_3,\tau_7\rangle.
\]
\end{remark}

\smallskip
Another difference with the (compact) complex case is that an Einstein para\-k\"ahler manifold with $s\neq0$ can have $[d\lambda]=0$; in fact, even $H^{1,1}_{BC}=0$ as in Example \ref{ex:Einstein}. On a paracomplex manifold $M$, decompose $d$ as $\partial_++ \partial_-$,
\[\partial_+\colon \Omega^{p,q}\to \Omega^{p+1,q}, \quad \partial_-\colon \Omega^{p,q}\to \Omega^{p,q+1}.\]
We can then define the (para)Dolbeault cohomology
\[H^{p,q}_+ = \frac{\ker \partial_+\colon \Omega^{p,q}\to \Omega^{p+1,q} }{\im \partial_+\colon \Omega^{p-1,q}\to \Omega^{p,q}},\quad H^{p,q}_- = \frac{\ker \partial_-\colon \Omega^{p,q}\to \Omega^{p,q+1} }{\im \partial_-\colon \Omega^{p,q-1}\to \Omega^{p,q}}.\]
\begin{theorem}
\label{thm:aubinyau_dei_poveri}
Let $(M,K)$ be a paracomplex manifold with 
\[H^1(M)=0=H^2(M)=H^{1,0}_+(M)=H^{0,1}_-(M).
\]
Then $H^{1,1}_{BC}$ is zero.
\end{theorem}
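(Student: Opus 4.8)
The plan is to prove the equivalent statement that every $d$-closed form $\sigma\in\Omega^{1,1}$ lies in the image of $dd^c\colon\Omega^{0,0}\to\Omega^{1,1}$, which is precisely $H^{1,1}_{BC}=0$. The first step is to make $dd^c$ explicit on functions. Since $K$ acts as the identity on $\Omega^{0,0}$ and on $\Omega^{1,0}$ and as $-\id$ on $\Omega^{0,1}$, writing $d=\partial_++\partial_-$ we get, for a function $h$, that $d^ch=K(dKh)=K(\partial_+h+\partial_-h)=\partial_+h-\partial_-h$, and hence $dd^ch=\partial_-\partial_+h-\partial_+\partial_-h=2\partial_-\partial_+h$, using $\partial_+^2=0=\partial_-^2$ and $\partial_+\partial_-=-\partial_-\partial_+$ (all consequences of $d^2=0$). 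So it suffices to write $\sigma=\partial_-\partial_+h$ for some $h\in\Omega^{0,0}$.

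Next I would use the de Rham hypothesis. Since $d\sigma=0$ and $H^2(M)=0$, there is a $1$-form $\gamma$ with $d\gamma=\sigma$, which I decompose by type as $\gamma=\gamma^{1,0}+\gamma^{0,1}$. Reading off the bidegree components of $d\gamma=\sigma$: the $(2,0)$-part gives $\partial_+\gamma^{1,0}=0$, the $(0,2)$-part gives $\partial_-\gamma^{0,1}=0$, and the $(1,1)$-part gives $\partial_-\gamma^{1,0}+\partial_+\gamma^{0,1}=\sigma$. Thus $\gamma^{1,0}$ is $\partial_+$-closed and so represents a class in $H^{1,0}_+(M)$, while $\gamma^{0,1}$ is $\partial_-$-closed and represents a class in $H^{0,1}_-(M)$.

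Now I would invoke the two para-Dolbeault vanishings: $H^{1,0}_+(M)=0$ gives a function $f$ with $\gamma^{1,0}=\partial_+f$, and $H^{0,1}_-(M)=0$ gives a function $g$ with $\gamma^{0,1}=\partial_-g$. Substituting, $\sigma=\partial_-\partial_+f+\partial_+\partial_-g=\partial_-\partial_+(f-g)=\tfrac12\,dd^c(f-g)$, so $[\sigma]=0$ in $H^{1,1}_{BC}$, which proves the theorem. (In this route the work is done by $H^2(M)=0$ together with $H^{1,0}_+(M)=0$ and $H^{0,1}_-(M)=0$, with $H^1(M)=0$ available if one wishes to pin down $\gamma$ up to an exact $1$-form.) The argument is essentially routine; the only delicate points are the bidegree bookkeeping for $d\gamma$ and keeping the correct sign and numerical factor in $dd^c=2\partial_-\partial_+$, so that the form produced genuinely represents the zero class. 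I do not anticipate a real obstacle beyond that.
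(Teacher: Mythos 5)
Your argument is correct, and it takes a genuinely different route from the paper. You work directly at the level of global forms: $H^2(M)=0$ gives a primitive $\gamma=\gamma^{1,0}+\gamma^{0,1}$ of the closed $(1,1)$-form, the bidegree bookkeeping shows $\gamma^{1,0}$ and $\gamma^{0,1}$ are $\partial_+$- and $\partial_-$-closed respectively, and the two para-Dolbeault vanishings let you write them as $\partial_+f$ and $\partial_-g$, whence $\sigma=\partial_-\partial_+(f-g)=\tfrac12 dd^c(f-g)$. The paper instead argues sheaf-theoretically: it identifies $H^{1,1}_{BC}$ with $H^1(\mathcal{K})$, where $\mathcal{K}$ is the sheaf of $dd^c$-closed functions (using that $dd^c$ is locally surjective onto closed $(1,1)$-forms), then resolves $\mathcal{K}$ by $\mathcal{Z}^{0,0}_+\oplus\mathcal{Z}^{0,0}_-$ using the local product structure, and feeds all four hypotheses into the resulting long exact sequences. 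Your proof is shorter and more elementary, and it has the notable feature of never using $H^1(M)=0$ (the paper needs it to kill the $H^1(\mathcal{Z}^0)$ term in its long exact sequence), so it actually proves the theorem under weaker hypotheses; what the sheaf argument buys in exchange is the structural identification $H^{1,1}_{BC}\cong H^1(\mathcal{K})$, valid without any vanishing assumptions. One minor caveat: the identity $dd^c=2\partial_-\partial_+$ on functions depends on the sign convention for the action of $K$ on $\Lambda^{1,0}$ versus $\Lambda^{0,1}$, but since only the image of $dd^c$ matters and a convention flip changes $dd^c$ by an overall sign, your conclusion is unaffected.
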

\begin{proof}
Let $\mathcal{Z}^{1,1}$ be the sheaf of closed $(1,1)$-forms, and let $\mathcal{K}$ be the sheaf of $dd^c$-closed functions; we have a short exact sequence of sheaves
\[0\to \mathcal{K}\to \mathcal{A}^0\xrightarrow{dd^c} \mathcal{Z}^{1,1}\to 0.\]
This gives
\[0\to H^0(\mathcal{K})\to H^0(\mathcal{A}^0)\xrightarrow{dd^c} H^0( \mathcal{Z}^{1,1})\to H^1(\mathcal{K})\to 0,\]
where we have used the fact that $\mathcal{A}^0$ is acyclic. Under the present assumptions, the $dd^c$ lemma is equivalent to $H^1(\mathcal{K})=0$.

Now consider the sheaves
\[\mathcal{Z}^{p,q}_\pm =\ker \partial_{\pm}\colon  \mathcal{A}^{p,q}\to \mathcal{A}^{p+1+q} ;\]
then the sequence
\[0\to\mathcal{Z}^0\to \mathcal{Z}^{0,0}_+\oplus \mathcal{Z}^{0,0}_-\to \mathcal{K}\to 0\]
is exact. Indeed, assume  the first and second de Rham cohomology of an open set $U$ is zero; then
\[\gamma\in \mathcal{K}(U)\implies dd^{c}\gamma=0 \implies d^c\gamma = df\implies \partial_+(\gamma-f)=0,\  \partial_-(\gamma+f)=0;\]
thus $\gamma\in \mathcal{Z}^{0,0}_+(U)\oplus \mathcal{Z}^{0,0}_-(U)$. 

Taking now the associated long exact sequence, and using the fact that $H^1(M)$ and $H^2(M)$ are zero, we deduce
\[H^1(\mathcal{K})=H^1( \mathcal{Z}^{0,0}_+)\oplus H^1( \mathcal{Z}^{0,0}_+).\]

Since a paracomplex manifold is locally a product, the following sequence is exact:
\[0\to  \mathcal{Z}^{0,0}_+\to \mathcal{A}^0\xrightarrow{\partial_+}\mathcal{Z}^{1,0}_+\to 0,\]
giving the exact sequence in cohomology
\[H^0(\mathcal{A}^0)\xrightarrow{\partial_+}H^0(\mathcal{Z}^{1,0}_+)\to H^1(\mathcal{Z}^{0,0}_+)\to 0.\]
This shows that  $H^1(\mathcal{Z}^{0,0}_+)=H^{1,0}_+(M)=0$, and a similar result applies to the minus component. Thus, $H^1(\mathcal{K})=0$, which is what we had to prove.
\end{proof}

\begin{example}\label{ex:Einstein}
On the solvable Lie group $G=\mathrm{Aff}_+(\R)\times \mathrm{Aff}_+(\R)\times \mathrm{Aff}_+(\R)$, consider a left-invariant basis of one-forms $e^1,\dotsc, e^6$ such that
\[de^1=de^4=e^{14},\quad de^2=de^5=e^{25},\quad  de^3=de^6=e^{36}.\]
We consider the $\SL(3,\R)$-structure for which this is an adapted coframe; it is parak\"ahler and Einstein with $s\neq0$. This Lie group is diffeomorphic to $\R^6$; in addition, it is easy to check that the space of leaves for both foliations is smooth and diffeomorphic to $\R^3$, so that Theorem~\ref{thm:aubinyau_dei_poveri} applies and the Bott-Chern class  $[d\lambda]$ is zero. Notice however that up to multiple, there is only one \emph{invariant} closed $(1,1)$-form: if a compatible Ricci-flat parak\"ahler metric exists, it cannot be invariant.
\end{example}

We conclude this section with a remark concerning the holonomy group of a parak\"ahler manifold. By construction, this is a subgroup of $\GL(n,\R)$. If it is also contained in $\GL_+(n,\R)$, the bundle $\Lambda^{n,0}$ is trivial and it is possible to fix a reduction to $\SL(n,\R)$. Ricci-flatness means that the \emph{restricted} holonomy is $\SL(n,\R)$; therefore, the full holonomy group has the form $G\times\GL(n,\R)$, with $G$ a discrete subgroup of $\GL_+(n,\R)$.
\begin{example}
Let $\lambda>0$ be a real constant and let 
\[\Omega=\{(x,y)\in\R^n\times \R^n \mid x_1>\frac1{1-\lambda}\}.\]
Let $g$ be the affine transformation
\[(x,y)\mapsto (\lambda x + e_1, \lambda^{-1}y).\]
The group generated by $g$ acts on $\Omega$ in a free, proper discontinuous way. Let $M=\Omega/\langle g\rangle$ be the quotient.

The flat parak\"ahler structure 
\[F=dx_1\wedge dy_1+\dots + dx_n\wedge dy_n, \quad h=\sum dx_i\odot dy_i\]
passes onto the quotient. It is easy to see that 
\[\{(\lambda^k dx_1, \dots, \lambda^k dx_n, \lambda^{-k}dy_1,\dots, \lambda^{-k}dy_n) \mid k\in\Z\}\]
defines a parallel $\Z$-structure on $\Omega$; this structure also passes onto the quotient. Moreover, taking parallel transport on $\Omega$ and observing that
\[\pi_{*gp}(\D{x_i})=\lambda^{-1}\pi_{*p}(\D{x_i}), \quad \pi_{*gp}(\D{y_i})=\lambda\pi_{*p}(\D{y_i}), \]
one sees that the holonomy of $M$ is precisely $\Z$. 
\end{example}

\section{Einstein examples}
\label{sec:examples}
In this section we show some examples of Einstein metrics associated to $\SL(n,\R)$-structures on Lie groups. With one exception (Example~\ref{example:non_nilpotent}), the Lie groups we consider are nilpotent with rational structure constants; therefore, each admits a compact quotient which carries an induced $\SL(n,\R)$-structure, also Einstein.

We will represent Lie groups by their structure constants; for instance the quadruplet $(0,0,0,12)$ represents a four-dimensional Lie group with a basis of left-invariant one-forms $e^1,e^2,e^3,e^4$ such that  
\[de^1=0,\ de^2=0,\ de^3=0,\ de^4=e^{12};\] 
the \emph{standard} $\SL(2,\R)$-structure on this Lie group is the one defined by the coframe $e^1,e^2,e^3,e^4$.

\begin{example}
\label{example:ricciflat}
A non-flat example of a parak\"ahler Ricci-flat manifold is the standard $\SL(n,\R)$-struc\-ture on the nilpotent Lie algebra $(24,0,0,0,0,35)$; in this case the curvature is
$-e^{34}\otimes e^{34}$.
\end{example}

\begin{example}
\label{example:non_nilpotent}
There also exist parak\"ahler Einstein Lie groups with non-zero scalar curvature: consider the one-parameter family of  Lie algebras
\[(14,25,36,t14,t25,t36), \quad t\in\R;\]
notice that these are not nilpotent (in fact, not even unimodular). The standard $\SL(n,\R)$-structure is Einstein, and Ricci-flat when $t=0$. In addition, the parak\"ahler metric is unique once one fixes the paracomplex structure, if one requires it to be invariant.
\end{example}

\begin{example}
The Lie group $(0,0,0,-\frac13 e^{23},-\frac13 e^{31},-\frac13 e^{12})$ is flat and nearly para\-k\"ahler; in this case, the minimal connection is the flat connection.

We can modify the example in such a way that the minimal connection is a non-trivial element in the subspace of $T^*\otimes\Sl(3,\R)$ isomorphic to $V_2$; imposing $d^2=0$, we obtain the following Lie algebra:
\begin{gather*}
de^1=-2  \lambda_3 e^{12}-2 e^{23}+2  e^{13} \lambda_2\\
de^2=-2  \lambda_3 e^{12} \lambda_2-2  e^{23} \lambda_2+2  e^{13} \lambda_2^{2}\\
de^3=-2  \lambda_3 e^{23}+2  \lambda_3 e^{13} \lambda_2-2  \lambda_3^{2} e^{12}\\
de^4={(e^{36}-e^{25})} \lambda_3 \lambda_2- \lambda_3 e^{24}-\frac{1}{3} e^{23}+ e^{34} \lambda_2- \lambda_3^{2} e^{26}+ e^{35} \lambda_2^{2}\\
de^5=\frac{1}{3} e^{13}-e^{34}+ \lambda_3 e^{15} \lambda_2- e^{35} \lambda_2+ \lambda_3^{2} e^{16}- {(e^{36}-e^{14})} \lambda_3\\
de^6=\lambda_3 e^{26}-\frac{1}{3} e^{12}+e^{24}- \lambda_3 e^{16} \lambda_2+ {(e^{25}-e^{14})} \lambda_2- e^{15} \lambda_2^{2}.
\end{gather*}
The standard structure on this Lie algebra is again flat and nearly parak\"ahler. A suitable change of basis, namely
\[\{-e^2+ e^1 \lambda_2,e^3- \lambda_3 e^1,-\frac{1}{6} e^1+e^4+ e^5 \lambda_2+ \lambda_3 e^6,\frac{1}{3} e^1+e^4+ e^5 \lambda_2+ \lambda_3 e^6,- e^6,- e^5\},\]
shows the Lie algebra to be nilpotent and isomorphic to $(0,0,0,12,14,24)$.
\end{example}

It follows from Theorem~\ref{thm:restateRicViaForms} that if the intrinsic torsion is purely in $W_2$, then the metric is Ricci flat. This leads us to a counterexample to the  paracomplex version of the Goldberg conjecture of almost-K\"ahler geometry. This ``para-Goldberg'' conjecture, as stated in \cite{Matsushita}, asserts that any compact almost parak\"ahler Einstein manifold is parak\"ahler.
\begin{proposition}
\label{prop:goldberg}
The standard $\SL(2,\R)$-structure on the nilmanifold $(0,0,0,12)$ defines a compact, almost parak\"ahler Ricci-flat manifold which is not parak\"ahler. Hence, the para-Goldberg conjecture does not hold.
\end{proposition}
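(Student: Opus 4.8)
The plan is to realise the manifold concretely, compute the invariant forms $F,\alpha,\beta$ together with their exterior derivatives, read off the intrinsic torsion from Proposition~\ref{prop:forms}, and then specialise Theorem~\ref{thm:restateRicViaForms}.

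First I would fix the simply connected nilpotent Lie group $G$ with left-invariant coframe $e^1,\dots,e^4$ satisfying $de^1=de^2=de^3=0$, $de^4=e^{12}$; the structure constants being rational, $G$ admits a lattice $\Gamma$, so $M=\Gamma\backslash G$ is compact and carries the induced left-invariant standard $\SL(2,\R)$-structure, with
\[F=e^{13}+e^{24},\qquad \alpha=e^{12},\qquad \beta=e^{34}.\]
A short computation gives $dF=-e^{2}\wedge e^{12}=0$, so the structure is symplectic and hence almost parak\"ahler; moreover $d\alpha=0$ and $d\beta=-e^{3}\wedge e^{12}=-e^{123}$, which is of pure type $(2,1)$.

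Next I would feed $dF=0$, $d\alpha=0$ and $d\beta=-e^{123}$ into Proposition~\ref{prop:forms}. From $dF=0$ one obtains $f_4=f_8=0$ and $\tau_1=\tau_5=0$ (the latter automatic for $n=2$, where $\Lambda^3V=\Lambda^3V^*=0$), hence also $\tau_3=-\tau_4=0$ and $\tau_7=-\tau_8=0$, since $\tau_4$ and $\tau_8$ are exactly the components carried by $f_4$ and $f_8$. From $d\alpha=0$ and $f_4=0$ one gets $\lambda^{0,1}=0$ and, from the $(n-1,2)$-part, $\tau_6=0$; from the vanishing of $(d\beta)^{1,n}$ together with $f_8=0$ one gets $\lambda^{1,0}=0$; finally $(d\beta)^{2,n-1}=-e^{123}\neq0$ forces $\tau_2\neq0$. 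As the structure is invariant, these identities hold at every point, so the intrinsic torsion is purely of class $W_2$ and $\lambda=0$.

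To finish, note first that the structure is \emph{not} parak\"ahler, since $\tau_2\neq0$ means $\nabla F\neq0$ (equivalently, the $(2,1)$-component of $d\beta$ already shows $K$ is not integrable). For Ricci-flatness I would specialise Theorem~\ref{thm:restateRicViaForms}: with $\lambda=0$ and $\tau_i=0$ for $i\neq2$, every term in the expression for $\ric'$ vanishes, because $f_3=-\partial(\tau_3)\hook F=0$, $f_4=0$, $d\lambda=0$, and each $F(\tau_i,\tau_j)$ or $\overline F(\tau_i,\tau_j)$ pairs $\tau_2$ with a component that is zero; likewise $[\ric'']_{S^2V}=0$, since $f_4=0$ and $f_6=0$. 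The only potentially surviving term is $[\ric'']_{S^2V^*}$, which reduces to $\epsilon\bigl([df_2]_{S^2V^*}\bigr)$; but $f_2$ lies in $\Span{e^1,e^2}$ and $de^1=de^2=0$, so $df_2=0$. Hence $\ric=0$, and $M$ is a compact, almost parak\"ahler, Ricci-flat (and therefore Einstein) manifold that is not parak\"ahler, contradicting the para-Goldberg conjecture.

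The hard part is really only bookkeeping: one has to keep track of the $n=2$ degeneracies of the modules $W_i$ when invoking Proposition~\ref{prop:forms}, and — the one genuine subtlety — one must not overlook the $[df_2]_{S^2V^*}$ contribution to the Ricci tensor, which for a general $W_2$-structure need not vanish and is killed here only by the special shape of $f_2$ on this particular nilmanifold.
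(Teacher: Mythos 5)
Your proof is correct and follows essentially the same route as the paper: the paper likewise reads off from Proposition~\ref{prop:forms} that $\lambda=0$ and the intrinsic torsion is pure $\tau_2$, invokes Theorem~\ref{thm:restateRicViaForms} for Ricci-flatness (noting the metric is in fact flat), and deduces non-integrability from the $(2,0)$-component of $de^4$. Your explicit check that $[df_2]_{S^2V^*}$ vanishes is a detail the paper leaves implicit in its blanket claim that the class $\mathcal{W}_2$ is Ricci-flat, so including it is a small but welcome refinement rather than a deviation.
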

\begin{proof}
One easily sees that $\lambda=0$ and the only component of the intrinsic torsion is $\tau_2$, implying Ricci-flatness (in fact, this metric is flat). The fact that $de^4$ has a component in $\Lambda^{2,0}$ shows that the paracomplex structure is not integrable.
\end{proof}

\begin{example}
Another example in the almost parak\"ahler class $W_2+W_6$ is the nilpotent Lie algebra
\[(0,0,46,0,12,0),\]
with the standard $\SL(3,\R)$-structure; its intrinsic torsion is
\[\tau_2 = e_2\otimes e^{12}, \quad \tau_6 = e_6\otimes e^{46}.\]
This metric is Ricci-flat, but the Levi-Civita curvature is  $e^{26}\otimes e^{26}$.
\end{example}

\begin{example}
A Ricci-flat example with intrinsic torsion in $W_3$ is given by the standard $\SL(n,\R)$-structure on the nilpotent Lie algebra $(0,0,0,0,0,45)$; this structure is almost parak\"ahler and flat, with
\[\tau = \frac{1}{2}  e_3\otimes e^{45}\in W_3.\]

A non-flat example is given by the standard $\SL(n,\R)$-structure on the $8$-dimensional nilmanifold $(0,0,0,0,0,0,56,57)$; then
\[\tau=\tau_3 = \frac{1}{2}  e_3\otimes e^{56}+\frac{1}{2}  e_4\otimes e^{57}\]
and the Ricci tensor is zero, but the curvature is $-\frac14 e^{56}\odot e^{45}$.
\end{example}

The nearly parak\"ahler examples we have shown so far are flat, and the geometry of flat nearly parak\"ahler structures is fully understood (see \cite{CortesSchafer}). We now illustrate a systematic approach to construct Ricci-flat  nearly parak\"ahler metrics that are not flat. Our examples have intrinsic torsion in $W_1$; by Theorem~\ref{thm:restateRicViaForms}, this condition ensures Ricci-flatness.

The intrinsic torsion of an almost paracomplex structure can be identified with the Nijenhuis tensor, which has two components
\[N^H+N^V \in \Lambda^2V^*\otimes H+\Lambda^2H^*\otimes V.\]
Under the structure group $\GL(n,\R)$, these two irreducible modules decompose respectively into $W_1+W_2$ and $W_5+W_6$; by Proposition~\ref{prop:paracomplexit}, we can write
\[N^H=4\partial(\tau_1+\tau_2), \quad N^V=4\partial(\tau_5+\tau_6).\]
The key observation is that $N^H$ can be identified with a linear bundle map that does not depend on the almost  paracomplex structure $K$ but only on the $1$-eigendistribution $\mathcal{V}$, namely
\[\tilde N^H\colon  \mathcal{V}\ann\to \Lambda^2\mathcal{V}^*,\]
where $\mathcal{V}\ann$ is the subbundle of $T^*M$ whose fibre at $p$ is the annihilator of $\mathcal{V}_p$.

\begin{proposition}
\label{prop:almostW1}
Let $\lie{g}$ be a Lie algebra of dimension $2n$; let $V\subset\lie{g}$ be a subspace of dimension $n$ such that:
\begin{enumerate}
\item $\tilde N^H$ has rank three;
\item the image $W=\im \tilde N^H$ consists of simple forms, i.e. if $\eta\in W$ then $\eta\wedge\eta=0$;
\item there is no $\sigma\in V^*\setminus\{0\}$ such that $\sigma\wedge\eta=0$ for all $\eta\in W$.
\end{enumerate}
Then for any subalgebra $H\subset \lie{g}$ complementary to $V$ there is a $\GL(n,\R)$-structure compatible with the splitting $\lie{g}=V\oplus H$
such that
\[\tau_2=0=\tau_5=\tau_6.\]
\end{proposition}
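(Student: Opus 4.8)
The plan is to dispose of $\tau_5$ and $\tau_6$ at once, and then to choose the metric so that $\tau_2$ vanishes. Since $H$ is a subalgebra, $[H,H]\subseteq H$, so the $V$-valued component $N^V\in\Lambda^2H^*\otimes V$ of the Nijenhuis tensor is zero: indeed $N^V$ is determined by $(X,Y)\mapsto 4[X,Y]_V$ for $X,Y\in H$ (equivalently $(de^i)^{0,2}=0$), cf.\ Proposition~\ref{prop:paracomplexit}. As $N^V=4\partial(\tau_5+\tau_6)$, $\partial$ is injective on $T^*\otimes\so(n,n)$, and $W_5\not\cong W_6$, this gives $\tau_5=0=\tau_6$ for \emph{every} $\GL(n,\R)$-structure compatible with $\lie{g}=V\oplus H$. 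A compatible structure is the same datum as an isomorphism $\phi\colon H\to V^*$ (it determines the neutral metric with $V,H$ isotropic and $g(v,h)=(\phi h)(v)$, and conversely). Applying $\id\otimes\phi$ to the fixed tensor $N^H\in\Lambda^2V^*\otimes H$ gives $\hat N^H=\hat N^H_\phi\in\Lambda^2V^*\otimes V^*$, which corresponds to $4\partial(\tau_1+\tau_2)$ under a natural $\GL(n,\R)$-equivariant isomorphism $V^*\otimes\Lambda^2V^*\to\Lambda^2V^*\otimes V^*$; since $W_1=\Lambda^3V^*$ and $W_2=V^{2,1}$ are inequivalent, $\tau_2=0$ if and only if $\hat N^H$ is totally skew, i.e.\ lies in $\Lambda^3V^*\subset\Lambda^2V^*\otimes V^*$. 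Finally, the image $W\subseteq\Lambda^2V^*$ of $\hat N^H$, regarded as a map $V\to\Lambda^2V^*$, equals $\im\tilde N^H$ and does not depend on $\phi$; so hypotheses (1)--(3) are intrinsic to $(\lie{g},V)$, and in particular independent of $H$.

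The key point is the linear-algebra fact that \emph{a three-dimensional subspace $W\subseteq\Lambda^2V^*$ all of whose elements are simple and which has no common divisor equals $\Lambda^2U^*$ for some three-dimensional $U^*\subseteq V^*$.} Polarising $\eta\wedge\eta=0$ gives $\eta\wedge\eta'=0$ for all $\eta,\eta'\in W$. Pick $0\ne\eta_1=\sigma_1\wedge\sigma_2\in W$; by condition (3) there is $\eta_2\in W$ with $\sigma_1\wedge\eta_2\ne0$. Then $\eta_1\wedge\eta_2=0$ forces $\eta_2=\sigma_1\wedge a+\sigma_2\wedge b$, $\sigma_1\wedge\eta_2\ne0$ forces $b\notin\Span{\sigma_1,\sigma_2}$, and $\eta_2\wedge\eta_2=0$ then forces $a\in\Span{\sigma_1,\sigma_2,b}$, so that $\eta_1,\eta_2\in\Lambda^2U^*$ with $U^*=\Span{\sigma_1,\sigma_2,b}$. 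Extend $\{\eta_1,\eta_2\}$ to a basis $\{\eta_1,\eta_2,\eta_3\}$ of $W$; after changing the basis of $U^*$ so that $\eta_1=u^1\wedge u^2$, $\eta_2=u^1\wedge u^3$, the relations $\eta_3\wedge\eta_1=\eta_3\wedge\eta_2=0$ give $\eta_3=u^1\wedge p+u^2\wedge q$ with $q\in U^*$, and if $p\notin U^*$ then $\eta_3\wedge\eta_3=0$ forces $\eta_3$ to be divisible by $u^1$, whence $u^1$ would divide all of $W$ --- contradicting (3). Thus $\eta_3\in\Lambda^2U^*$, so $W\subseteq\Lambda^2U^*$, and equality follows by dimension. (This is, in essence, the classical classification of linear systems of skew-symmetric forms of rank at most two.)

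To finish, fix a basis $u^1,u^2,u^3$ of $U^*$, put $\omega=u^1\wedge u^2\wedge u^3\in\Lambda^3V^*$, and consider $\sigma_\omega\colon V\to\Lambda^2V^*$, $v\mapsto v\hook\omega$; it is onto $W=\Lambda^2U^*$ with kernel the annihilator $(U^*)\ann$ of dimension $n-3$, and the element of $V^*\otimes\Lambda^2V^*$ it represents is totally skew. On the other hand, $\hat N^H_\phi$, viewed as a map $V\to\Lambda^2V^*$, is obtained from $\tilde N^H$ (which has rank $3$, hence kernel of dimension $n-3$) by precomposing with the isomorphism $V\to H^*\cong V\ann$ transpose to $\phi$; as $\phi$ runs over all isomorphisms $H\to V^*$, these maps run over the full $\GL(V)$-orbit, under precomposition, of one such surjection $V\to W$. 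Since any two linear surjections $V\to W$ with kernels of equal dimension differ by precomposition with an element of $\GL(V)$, there is a $\phi$ for which this map equals $\sigma_\omega$; the associated metric $g$ then defines a compatible $\GL(n,\R)$-structure with $\hat N^H=\omega\in\Lambda^3V^*$, hence with $\tau_2=0$. Combined with the first paragraph, this proves $\tau_2=0=\tau_5=\tau_6$.

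The step I expect to be the main obstacle is the linear-algebra lemma, specifically the use of condition (3) to rule out the alternative $W\subseteq\sigma\wedge V^*$; the rest is bookkeeping, the only point needing care being the equivalence $\tau_2=0\Leftrightarrow\hat N^H\in\Lambda^3V^*$, which rests on tracking the identifications $H\cong V^*$, $H^*\cong V\ann$ and the rearrangement $\partial$.
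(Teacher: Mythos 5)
Your proposal is correct and follows essentially the same route as the paper: $H$ being a subalgebra kills $\tau_5,\tau_6$; conditions (1)--(3) force $W=\Lambda^2U^*$ for a three-dimensional $U^*\subset V^*$; and the freedom in the identification $H\cong V^*$ is then used to make $N^H$ totally skew, i.e.\ $\tau_2=0$. The only differences are cosmetic --- you prove the linear-algebra dichotomy that the paper merely asserts, and you realise the skew-symmetrising choice of $\phi$ by an abstract $\GL(V)$-orbit argument where the paper simply picks the dual basis $e^{n+j}$ with $\eta^i(e^{n+j})=\delta_{ij}$.
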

\begin{proof}
Conditions 1 and 2 imply that $V^*$ is spanned by $e^{1},\dots, e^{n}$ such that $W$ is either of the form
\[\Span{e^{12},e^{13},e^{23}}\]
or 
\[\Span{e^{14},e^{24},e^{34}};\]
the latter possibility is ruled out by Condition 3.

Having fixed the splitting $V\oplus H$, we can identify $V\ann$ with $H^*$; in particular, we can write 
\[\tilde N^H=\eta^1\otimes e^{23}+\eta^2\otimes e^{31}+\eta^3\otimes e^{12},\]
where $\eta^1,\eta^2,\eta^3$ are linearly independent elements of $H^{**}$. Let $e^{n+1},\dots, e^{2n}$ be a basis of $H^*$ such that $\eta^i(e^{n+j})=\delta_{ij}$; relative to the coframe $e^1,\dotsc, e^{2n}$, we can write 
\[N^H=e^{23}\otimes e_{n+1}+e^{31}\otimes e_{n+2}+e^{12}\otimes e_{n+3};\]
thus, this coframe defines an $\SL(n,\R)$-structure such that 
\[\tau_1+\tau_2 =\frac18( e^1\otimes e^{23}+e^2\otimes e^{31}+e^3\otimes e^{12}),\] i.e. $\tau_2=0$. The fact that $\tau_5=0=\tau_6$ follows from $H$ being integrable.
\end{proof}
A similar result holds for distributions $\mathcal{V}$ and $\mathcal{H}$ on a manifold, if Conditions 1---3 are assumed to hold at each point, but we will only consider the case of Lie groups. Then, the condition $\tau_2=0$ can be expressed as a linear equation in $F$ by using the following contraction:
\begin{equation*}
c\colon (\Lambda^2V^*\otimes H)\otimes \Lambda^2T^*\to \Lambda^2V^*\otimes V^*, \quad (\eta\otimes h)\otimes F\mapsto \eta\otimes h\hook F.
\end{equation*}

\begin{corollary}
Let $\lie{g}=V\oplus H$ be a splitting such as in Proposition~\ref{prop:almostW1}, and assume that $d(\Lambda^{0,n})\subset\Lambda^{2,n-1}$. Then any non-degenerate form $F\in\Lambda^{1,1}$ such that 
\begin{equation}
 \label{eqn:linearricciW1}
dF\in\Lambda^{3,0}, \quad c(N^H,F)\in\Lambda^{3,0}
\end{equation}
defines a Ricci-flat $\GL(n,\R)$-structure with intrinsic torsion in $W_1$.
\end{corollary}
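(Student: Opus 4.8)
The goal is to verify that the two linear conditions in \eqref{eqn:linearricciW1}, together with the standing hypotheses, force the intrinsic torsion of the $\SL(n,\R)$-structure determined by $(V,H,F)$ to lie entirely in $W_1$, which by Theorem~\ref{thm:restateRicViaForms} gives Ricci-flatness. The plan is to go through the components of $\tau$ and $\lambda$ one at a time, using Proposition~\ref{prop:forms} to read them off exterior derivatives, and show each one vanishes except $\tau_1$.

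First I would fix the coframe $e^1,\dotsc,e^{2n}$ produced by Proposition~\ref{prop:almostW1}, so that already $\tau_2=\tau_5=\tau_6=0$. It remains to kill $\tau_3,\tau_4,\tau_7,\tau_8$ and $\lambda$. By Proposition~\ref{prop:forms}, the components $\tau_1,\tau_3,\tau_4,f_4,f_8,\lambda^{1,0}$ are all determined by the type decomposition of $dF$: the hypothesis $dF\in\Lambda^{3,0}$ immediately gives $(dF)^{1,2}=0=(dF)^{2,1}$ and $(dF)^{0,3}=0$, whence $f_4=f_8=0$ (from $-\tfrac12\Lambda(dF)$), $\tau_3=-\tau_4$ and $\tau_7=-\tau_8$ (from the $(1,2)$ and $(2,1)$ parts), and $\tau_5=0$ is reconfirmed. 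So the surviving unknowns are $\tau_4$ (equivalently $f_4$, already zero), $\tau_8$ ($=f_8$, already zero), hence $\tau_3=\tau_7=0$, and $\lambda$. Since $f_4=f_8=0$, the formulas for $\lambda^{0,1},\lambda^{1,0}$ in Proposition~\ref{prop:forms} reduce to $\lambda^{0,1}=-\tfrac1n\pi_{0,1}(d\alpha)^{n,1}$ and $\lambda^{1,0}=\tfrac1n\pi_{1,0}(d\beta)^{1,n}$. Here I would invoke the hypothesis $d(\Lambda^{0,n})\subset\Lambda^{2,n-1}$, i.e.\ $(d\beta)^{1,n}=0$, giving $\lambda^{1,0}=0$; and then the closedness condition $dF\in\Lambda^{3,0}$ combined with $d^2=0$ (the Jacobi identity for $\lie g$) forces $(d\alpha)^{n,1}=0$ as well — this is where I expect to use the second condition $c(N^H,F)\in\Lambda^{3,0}$, which is precisely the statement that the $(n-1,2)$-part of $d\alpha$ pairs trivially into the unwanted slot, equivalently $\tau_2$-type contributions to $d\alpha$ are absent, leaving $(d\alpha)^{n,1}$ alone to be controlled; applying $d$ to $dF=(dF)^{3,0}$ and extracting an appropriate type component relates $(d\alpha)^{n,1}$ to $(dF)^{1,2}=0$.

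The one genuinely delicate point is the appearance of $\lambda^{0,1}$: one must rule it out without assuming $H$-integrability-type hypotheses beyond what is given. The mechanism is that $c(N^H,F)\in\Lambda^{3,0}$ is exactly the vanishing of the obstruction that would otherwise couple $\tau_1$ to $\tau_2$ and feed a $(n-1,2)$-component into $d\alpha$; once that component is gone, the remaining $(n,1)$-component of $d\alpha$ is governed by $d^2\alpha=0$ together with $d(\Lambda^{n,0})$-type constraints coming from $(dF)^{1,2}=0$, and a short type-chasing argument shows it must vanish. So the main obstacle is bookkeeping: carefully tracking which $(p,q)$-components of $d\alpha$, $d\beta$, $dF$ survive under the two hypotheses and confirming via Jacobi that no residual $\lambda$ can appear. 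Once $\tau_2=\tau_3=\tau_4=\tau_5=\tau_6=\tau_7=\tau_8=0$ and $\lambda=0$, the intrinsic torsion is in $W_1$, and Theorem~\ref{thm:restateRicViaForms} gives $\ric'=0$ and $\ric''=0$ (every term in the $\ric''$ formulas involves $f_2,f_6,f_8,\lambda$ or $\tau_3,\tau_7$, all zero), so the structure is Ricci-flat with intrinsic torsion in $W_1$ as claimed.
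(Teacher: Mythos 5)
Your overall route is the paper's: use Proposition~\ref{prop:almostW1} and Proposition~\ref{prop:forms} to kill every torsion component except $\tau_1$, then read off Ricci-flatness from Theorem~\ref{thm:restateRicViaForms}; the deductions $f_4=f_8=0$, $\tau_3=\tau_4=\tau_7=\tau_8=0$ from $dF\in\Lambda^{3,0}$, and $\lambda^{1,0}=0$ from $(d\beta)^{1,n}=0$, are all correct. But there are two concrete problems. First, you have misassigned the hypothesis $c(N^H,F)\in\Lambda^{3,0}$. Proposition~\ref{prop:almostW1} only asserts the \emph{existence} of some compatible $\GL(n,\R)$-structure with $\tau_2=0$; the splitting of $N^H=4\partial(\tau_1+\tau_2)$ into $\tau_1$ and $\tau_2$ depends on the identification $H\cong V^*$, i.e.\ on $F$, so for the \emph{given} $F$ the vanishing of $\tau_2$ is not automatic --- it is exactly what the condition $c(N^H,F)\in\Lambda^{3,0}$ encodes, as the paper states in the paragraph introducing $c$. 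Your proposal instead takes $\tau_2=0$ for free from the coframe of Proposition~\ref{prop:almostW1} and redirects the $c$-condition toward $(d\alpha)^{n-1,2}$ and $\lambda^{0,1}$; but $\tau_2$ does not enter $d\alpha$ at all (by Proposition~\ref{prop:forms}, $(d\alpha)^{n-1,2}=-\partial(\tau_5+\tau_6)\hook\alpha$, which already vanishes because $H$ is a subalgebra), so the condition does nothing for you there and, worse, $\tau_2=0$ is left unproved for the structure you are actually studying.

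Second, the mechanism you sketch for $\lambda^{0,1}$ does not close. Writing $\psi=(dF)^{3,0}=dF$ and expanding $0=d\psi$ by type with $\tau=\tau_1$ and $\lambda=\lambda^{0,1}$, one finds $d\psi=\mathfrak{a}(\nabla\psi)-3\lambda^{0,1}\wedge\psi$, so the only component involving $\lambda^{0,1}$ is the $(3,1)$-part, which yields $\bigl(\mathfrak{a}(\nabla\psi)\bigr)^{3,1}=3\lambda^{0,1}\wedge\psi$: a relation between $\lambda^{0,1}$ and the covariant derivative of the torsion form, not the vanishing of $\lambda^{0,1}$. Since $\lambda^{0,1}=-\frac1n\pi_{0,1}(d\alpha)^{n,1}-\frac{n-1}{n}f_4$ and the stated hypotheses constrain $d\beta$ but say nothing about $d\alpha$, eliminating $\lambda^{0,1}$ --- or at least $(d\lambda)^{1,1}$, which is the only term surviving in $\ric'$ once $\tau=\tau_1$ --- requires an argument you have not supplied. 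The paper's own proof is admittedly terse here (it asserts that Proposition~\ref{prop:forms} gives $\lambda=0$ without elaboration), and you deserve credit for isolating $\lambda^{0,1}$ as the delicate term; but the substitute you propose, type-chasing in $d^2F=0$, demonstrably does not produce its vanishing.
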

\begin{proof}
Fix a compatible reduction to $\SL(n,\R)$; then Proposition~\ref{prop:forms} implies
\[\lambda=0, \quad \tau_3=0=\tau_4=\tau_7=\tau_8.\]
By Proposition~\ref{prop:almostW1}, $\tau_2$, $\tau_5$ and $\tau_6$ vanish as well; thus the intrinsic torsion is in $W_1$, and by Theorem~\ref{thm:restateRicViaForms} the metric is Ricci-flat.
\end{proof}

This gives us an effective recipe to find Ricci-flat nearly parak\"ahler examples: for a given  Lie algebra $\lie{g}$ of dimension $2n$,  we seek a subspace $V\subset\lie{g}$ of dimension $n$ that satisfies the conditions of Proposition~\ref{prop:almostW1}, and then a complementary integrable distribution $H$ such that $d(\Lambda^{0,n})\subset\Lambda^{2,n-1}$. At this point it is only a matter of computing the space of forms of type $(1,1)$ that satisfy the linear equations \eqref{eqn:linearricciW1}, and verifying whether it contains a non-degenerate form. Implementing this strategy with a computer on Lie algebras of the form $\lie{h}\oplus\R$, where $\lie{h}$ ranges among $7$-dimensional nilpotent Lie algebras as classified by \cite{Gong}, gives the following:
\begin{theorem}
\label{thm:nkricciflat}
Each Lie algebra in Table~\ref{fig:nkricciflat} admits a family of non-flat, Ricci-flat nearly-parak\"ahler structures with intrinsic torsion $\tau_1\neq0$ and non-zero curvature,  depending on parameters  $\lambda,\mu\neq0$ and $k$.
\end{theorem}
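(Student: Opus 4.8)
The plan is to verify the statement by direct, if lengthy, computation on each of the finitely many Lie algebras listed in Table~\ref{fig:nkricciflat}, following the recipe laid out just before the theorem. For a fixed Lie algebra $\lie{g}=\lie{h}\oplus\R$ with $\lie{h}$ one of the $7$-dimensional nilpotent Lie algebras from \cite{Gong}, the first step is to exhibit an explicit $8$-dimensional subspace $V\subset\lie{g}$ of dimension $4$ meeting the three hypotheses of Proposition~\ref{prop:almostW1}: namely that the induced map $\tilde N^H\colon \mathcal{V}\ann\to\Lambda^2\mathcal{V}^*$ has rank three, that its image $W$ consists of simple $2$-forms, and that no nonzero $\sigma\in V^*$ annihilates all of $W$. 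Since $\tilde N^H$ depends only on $V$ and the Lie bracket, each of these is a finite rank/determinant computation in the structure constants; for the specific $V$ chosen (recorded by the coframe conventions of Section~\ref{sec:examples}) one checks them by inspection.

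Next I would produce a complementary subalgebra $H\subset\lie{g}$ with $\lie{g}=V\oplus H$ such that $d(\Lambda^{0,n})\subset\Lambda^{2,n-1}$; concretely, this means verifying that $H$ is closed under the bracket and that the exterior derivative of the volume form $\beta$ of $H$ has no component of type $(1,n)$, i.e. $\tau_1+\tau_2$ has no $\tau_2$-part on the $H$-side — equivalently $d\beta\in\Lambda^{2,n-1}$. Having fixed such a splitting, Proposition~\ref{prop:almostW1} guarantees $\tau_2=\tau_5=\tau_6=0$, and the hypothesis on $d\beta$ together with Proposition~\ref{prop:forms} forces $\lambda=0$ and $\tau_3=\tau_4=\tau_7=\tau_8=0$. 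At this point one solves the linear system \eqref{eqn:linearricciW1} — $dF\in\Lambda^{3,0}$ and $c(N^H,F)\in\Lambda^{3,0}$ — for $F\in\Lambda^{1,1}$, obtaining a parameter family; the free parameters are exactly the $\lambda,\mu,k$ of the statement (the clash of notation with the intrinsic-torsion $\lambda$ is harmless since that $\lambda$ vanishes here). One then checks that for $\lambda,\mu\neq0$ the resulting $F$ is non-degenerate, so $F^n\neq0$, yielding a genuine $\GL(n,\R)$-structure; by the Corollary preceding the theorem its intrinsic torsion lies in $W_1$, hence by Theorem~\ref{thm:restateRicViaForms} the metric is Ricci-flat, and it is automatically nearly parak\"ahler with $\tau_1\neq0$ (the latter because $dF$ has nonzero $(3,0)$-part, which one reads off from $d\alpha$).

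The remaining point is non-flatness: one must compute the full Riemann curvature of the Levi-Civita connection $\omega+\tau_1$ (here $\omega$ is the minimal connection, whose torsion is $-\partial(\tau_1)$ since all other components vanish) and exhibit a nonzero entry for generic values of the parameters. Using the decomposition $\Omega^{LC}=(\Omega+\tfrac12[[\tau_1,\tau_1]]_{\Sl(n,\R)})+D\tau_1$ from Section~\ref{intro}'s curvature formula, with $\lambda=0$, this reduces to computing $D\tau_1$ and the minimal-connection curvature $\Omega$ from the structure equations; one then observes that the curvature $2$-form is not identically zero (and in particular not equal to the flat examples' zero curvature), distinguishing these structures from the flat nearly parak\"ahler ones of \cite{CortesSchafer}. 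Since all of this is algebraic in finitely many explicitly given Lie algebras, the proof amounts to tabulating, for each row of Table~\ref{fig:nkricciflat}, the subspace $V$, the complement $H$, the family $F(\lambda,\mu,k)$, and one nonvanishing curvature component; I expect the main obstacle to be purely organizational — managing the size of the curvature computations across all the listed algebras — rather than any conceptual difficulty, the conceptual work having been front-loaded into Proposition~\ref{prop:almostW1} and Theorem~\ref{thm:restateRicViaForms}. In practice these computations are carried out with computer algebra, as the construction recipe itself was.
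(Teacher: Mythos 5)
Your proposal follows essentially the paper's own route: the theorem is proved precisely by the recipe of Proposition~\ref{prop:almostW1} together with the corollary on \eqref{eqn:linearricciW1}, implemented by computer over the Lie algebras $\lie{h}\oplus\R$ with $\lie{h}$ ranging over Gong's list of $7$-dimensional nilpotent Lie algebras, with Table~\ref{fig:nkricciflat} recording the resulting adapted coframes from which Ricci-flatness (via Theorem~\ref{thm:restateRicViaForms}), the condition $\tau_1\neq0$, and the non-vanishing of the curvature are verified directly. The only quibbles are cosmetic slips (e.g.\ ``$8$-dimensional subspace $V$ of dimension $4$'' should read ``$4$-dimensional subspace of the $8$-dimensional $\lie{g}$'', and $\lambda^{0,1}=0$ uses $f_4=0$ plus nilpotency rather than the $d\beta$ condition alone), not gaps.
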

We emphasize that this is not a classification; whilst our program did not find any other example, we do not know whether there exist other $8$-dimensional nilpotent Lie algebras with a strictly nearly parak\"ahler structure which is Ricci-flat but not flat.

\begin{table}[t]
\caption{Ricci-flat nearly-parak\"ahler structures on nilpotent Lie algebras}\label{fig:nkricciflat}
\resizebox{\textwidth}{!}{
\bgroup
\def\arraystretch{1.5}
\begin{tabular}{>{$}l<{$}|>{$}l<{$}}
\lie{h} & \lie{g} \text{ (in adapted coframe)}\\
\midrule
0,0,0,e^{12},e^{23}+e^{14},e^{24},e^{15}-e^{34}&0,0,-\lambda e^{18},0,-\mu e^{23} +\lambda e^{78}, \mu e^{13},- \mu e^{12} +\lambda\mu  e^{38},0\\
0,0,0,e^{12},e^{23}+e^{14},e^{24}+e^{13},-e^{34}+e^{15}&0,0,-\lambda e^{18},0,\lambda e^{78}- \mu e^{23},-\lambda\mu e^{28} + \mu e^{13},\lambda \mu  e^{38} - \mu e^{12},0\\
0,0,e^{12},0,e^{24}+e^{13},e^{23},e^{34}+e^{15}+e^{26}&0,-\lambda e^{18},0,0, \mu e^{23}+e^{17}+\lambda e^{68}-k\lambda  e^{18} ,- \mu e^{13}+\lambda\mu e^{28}, \mu e^{12},0\\
0,0,0,e^{12},e^{23}+e^{14},-e^{24}+e^{13},-e^{34}+e^{15}&0,0,-\lambda e^{18},0,- \mu e^{23} +\lambda e^{78}, \mu e^{13} +\lambda \mu e^{28},- \mu e^{12}+\lambda\mu  e^{38} ,0\\
0,0,e^{12},0,e^{13}+e^{24},e^{23},-e^{26}+e^{15}+e^{34}&0,-\lambda e^{18},0,0,-e^{17}+ \mu e^{23} +\lambda e^{68}+k\lambda  e^{18},\lambda\mu  e^{28} - \mu e^{13}, \mu e^{12},0
\end{tabular}
\egroup
}
\end{table}

\section{Non-existence results}
The examples of Section~\ref{sec:examples} correspond to left-invariant structures on nilpotent Lie groups; in this section we give some non-existence results in the same context.

\begin{lemma}
\label{lemma:paracomplexNilpotent}
Every left-invariant paracomplex $\SL(n,\R)$-structure on a nilpotent Lie group satisfies
\[\lambda=\frac{n-1}n (f_8-f_4).\]
\end{lemma}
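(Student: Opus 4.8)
The plan is to exploit the fact that on a nilpotent Lie group the left-invariant differential forms have no exact invariant top forms, or more precisely that certain invariant forms obtained as exterior derivatives must integrate to zero against invariant volumes. The relevant quantities are $\lambda^{0,1}$, $f_4$, $\lambda^{1,0}$, $f_8$, which by Proposition~\ref{prop:forms} are read off from $(d\alpha)^{n,1}$ and $(d\beta)^{1,n}$ via the isomorphisms $\pi_{0,1}$ and $\pi_{1,0}$. Since the structure is paracomplex, $d\alpha$ has only the components $(d\alpha)^{n,1}$ and $(d\alpha)^{n-1,2}$, and similarly for $\beta$; so $d\alpha=-(n\lambda^{0,1}+(n-1)f_4)\wedge\alpha-\partial(\tau_5+\tau_6)\hook\alpha$. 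The key point is that $\alpha$ is an invariant $n$-form on the $n$-dimensional foliation, so $d\alpha$ restricted to appropriate directions behaves like the exterior derivative of a volume form on the leaf.

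First I would consider the $(n+1)$-form $d\alpha\wedge \beta$ (or more precisely extract the coefficient of the invariant volume $F^n$, equivalently $\alpha\wedge\beta$ up to scale). We have $\alpha\wedge\beta = c\,F^n$ for a nonzero constant $c$, so $d\alpha\wedge\beta = -d(\alpha\wedge\beta)/1$ up to sign correction from $d\beta$; more carefully, $d(\alpha\wedge\beta)=d\alpha\wedge\beta\pm\alpha\wedge d\beta$, and since $\alpha\wedge d\beta$ involves $(d\beta)^{1,n}=(n\lambda^{1,0}-(n-1)f_8)\wedge\beta$ wedged with $\alpha$, both terms contribute a multiple of the invariant volume form. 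On a nilpotent Lie group, the Chevalley–Eilenberg differential of a top-degree invariant form is zero — indeed there are no exact invariant volume forms, since nilpotent Lie algebras are unimodular and $\Lambda^{2n}\lie{g}^*$ is one-dimensional with trivial differential. Hence $d(\alpha\wedge\beta)=0$, which forces the coefficient of the volume form in $d\alpha\wedge\beta+ (-1)^n\alpha\wedge d\beta$ to vanish.

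Computing that coefficient: the term $\partial(\tau_5+\tau_6)\hook\alpha$ lies in $\Lambda^{n-1,2}$, and wedging with $\beta\in\Lambda^{0,n}$ gives a form of type $(n-1,n+2)$, which vanishes for $n\geq 2$; likewise $\partial(\tau_1+\tau_2)\hook\beta$ wedged with $\alpha$ vanishes by type. So only the $\lambda$ and $f_4,f_8$ terms survive, and we get
\begin{equation*}
-(n\lambda^{0,1}+(n-1)f_4)\wedge\alpha\wedge\beta \pm \alpha\wedge (n\lambda^{1,0}-(n-1)f_8)\wedge\beta = 0.
\end{equation*}
Since $\lambda^{0,1}$ and $f_8$ are of type $(0,1)$ while $f_4$ and $\lambda^{1,0}$ are of type $(1,0)$, wedging against $\alpha\wedge\beta$ is injective on each type (a $(1,0)$-form wedge $\alpha$ is nonzero iff the form is nonzero, etc.), so separating by type yields $n\lambda^{0,1}+(n-1)f_4=0$ after accounting for... wait, actually the two surviving terms have different types, so I should instead wedge $d\alpha$ alone with $\beta$ and use $d(\alpha\wedge\beta)=0$ together with the analogous identity for $d\beta\wedge\alpha$. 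Concretely, $d\alpha\wedge\beta$ is a sum of a type-$(n+1,n)$ piece (from the $\lambda^{1,0}$-free part, namely $f_4$ and $\lambda^{0,1}$ wedged with $\alpha\wedge\beta$ — these are actually of type $(n,n+1)$, zero!) — so I need to be more careful: $f_4\wedge\alpha$ has type $(n,0)+(1,0)$... no. The clean route: $f_4$ has type $(0,1)$ (it is $a_ie^{n+i}$, so type $(0,1)$), hence $f_4\wedge\alpha\wedge\beta$ has type $(n,n+1)=0$. So that term drops too, and I instead get a relation purely from pairing $d\alpha$ with a suitable $(1,n-1)$-form. The right object is to wedge $(d\alpha)^{n,1}$ with an $(0,n-1)$-form inside $\beta$; equivalently, apply $\pi_{0,1}$ and then pair. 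Thus the actual mechanism: $d(\alpha\wedge(e_{n+i}\hook\beta))=0$ for each $i$ (again no exact invariant $n+1$... no, these are $(2n-1)$-forms; their $d$ is a multiple of the volume). This gives $n$ scalar equations; summing over $i$ and combining with the mirror computation using $\beta$ and $e_i\hook\alpha$ yields $n\lambda=(n-1)(f_8-f_4)$ componentwise.

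The main obstacle will be bookkeeping the type degrees and the precise contraction identities so that the unimodularity input (vanishing $d$ on appropriate invariant forms) isolates exactly the combination $n\lambda-(n-1)(f_8-f_4)$ and nothing else; in particular one must verify that the $\partial(\tau_i)\hook(\cdot)$ correction terms in Proposition~\ref{prop:forms} drop out by type when wedged against the complementary $(n-1)$-forms extracted from $\alpha$ and $\beta$. Once the type-counting is pinned down, the result is immediate from $d$ annihilating invariant forms of degree $2n-1$ on a nilmanifold (equivalently, the unimodularity of a nilpotent Lie algebra).
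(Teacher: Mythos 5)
Your argument has a genuine gap, and it is located exactly where you flag the ``main obstacle'': the correction terms do \emph{not} drop out by type. In $d\bigl(\alpha\wedge(e_{n+i}\hook\beta)\bigr)=d\alpha\wedge(e_{n+i}\hook\beta)+(-1)^n\alpha\wedge d(e_{n+i}\hook\beta)$, the second summand survives, because $d(e_{n+i}\hook\beta)$ has a $(0,n)$-component --- the divergence of $e_{n+i}$ along the $H$-foliation times $\beta$ --- and $(n,0)\wedge(0,n)$ is top-degree. So the $n$ scalar equations you extract read $\bigl(n\lambda^{0,1}+(n-1)f_4\bigr)(e_{n+i})=\Div_\beta(e_{n+i})$: they merely split the unimodularity identity $\tr\ad X=0$ into its two partial traces (along $V$ and along $H$) and assert that these are opposite, not that each vanishes. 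No argument using only unimodularity can close this gap, because the lemma is false for unimodular non-nilpotent Lie algebras. For instance, take the $4$-dimensional solvable unimodular Lie algebra with $de^1=-e^{13}$, $de^4=-e^{34}$, $de^2=0=de^3$ and the standard structure ($V=\Span{e_1,e_2}$, $H=\Span{e_3,e_4}$): both distributions are integrable, yet $d\alpha=e^3\wedge\alpha$ and $dF=e^{234}$ give $f_4=\tfrac12e^3$ and $\lambda^{0,1}=-\tfrac34e^3\neq-\tfrac12f_4$.

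The paper's proof uses nilpotency, not just unimodularity, and in an essential way: for $X\in H$ one has $\iota_X\alpha=0$, so
\begin{equation*}
\Lie_X\alpha=\iota_X\,d\alpha=-\bigl(n\lambda^{0,1}+(n-1)f_4\bigr)(X)\,\alpha ,
\end{equation*}
i.e.\ $\alpha$ is an eigenvector of the operator induced by $\ad X$ on $\Lambda^n\lie{g}^*$. Since $\ad X$ is nilpotent, so is this induced operator, hence the eigenvalue is forced to be zero; as $n\lambda^{0,1}+(n-1)f_4$ has type $(0,1)$ it vanishes identically, and the mirror argument with $\beta$ gives $n\lambda^{1,0}=(n-1)f_8$. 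If you want to salvage your approach, you must replace ``$d$ kills invariant $(2n-1)$-forms'' by this eigenvalue argument (or otherwise show that the leafwise divergences $\Div_\beta(e_{n+i})$ and $\Div_\alpha(e_i)$ vanish, which again requires nilpotency).
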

\begin{proof}
Let $\lie{g}$ be a nilpotent Lie algebra with a fixed $\SL(n,\R)$-structure. For any $X\in\lie{g}$, $\ad X$ is nilpotent; the induced action on $\Lambda\lie{g}^*$ is also nilpotent. 

By Proposition~\ref{prop:forms}, 
\[d\alpha = (-n\lambda-(n-1)f_4)\wedge \alpha;\]
whenever $X$ is in $H$, $\alpha$ is an eigenvector for the nilpotent endomorphism $\ad X$ with eigenvalue \[(-n\lambda-(n-1)f_4)(X);\] it follows that
\[n\lambda^{0,1}=-(n-1)f_4.\]
The same argument applied to $\beta$ yields $n\lambda^{1,0}=(n-1)f_8$.
\end{proof}

Theorem~\ref{thm:restateRicViaForms} immediately implies:
\begin{corollary}
Invariant parak\"ahler structures on nilpotent Lie groups are Ricci-flat.
\end{corollary}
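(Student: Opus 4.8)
The plan is to combine the preceding lemma with the restated Ricci formula (Theorem~\ref{thm:restateRicViaForms}) under the two standing hypotheses: the structure is parak\"ahler, and it is left-invariant on a nilpotent Lie group. Being parak\"ahler means $dF=0$, hence by Proposition~\ref{prop:forms} the entire intrinsic torsion $\tau$ vanishes: indeed $(dF)^{3,0}=0$ forces $\tau_1=0$, $(dF)^{0,3}=0$ forces $\tau_5=0$, the $(2,1)$ and $(1,2)$ components force $\tau_7+\tau_8$ and $\tau_3+\tau_4$ and the one-forms $f_4,f_8$ to vanish, and the remaining formulae of Proposition~\ref{prop:forms} (using $d\alpha$, $d\beta$) kill $\tau_2$ and $\tau_6$ as well. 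So the only surviving piece of the $\SL(n,\R)$-intrinsic torsion is $\lambda$.

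Next I would feed this into Theorem~\ref{thm:restateRicViaForms}. With all $\tau_i=0$ and $f_3=f_4=f_7=f_8=0$, every term on the right-hand side of the three displayed formulae vanishes except the single term $n(d\lambda)^{1,1}$ appearing in $\ric'$; the $S^2V$ and $S^2V^*$ components of $\ric''$ reduce to expressions built only from $f_4,f_8,\lambda^{0,1}\odot f_4,\lambda^{1,0}\odot f_8$ and the vanishing $\tau_i$, hence are zero. Thus $\ric=\ric'=n(d\lambda)^{1,1}=n\,d\lambda$, since for a parak\"ahler structure $\lambda$ has pure type and $d\lambda$ is of type $(1,1)$ (this also matches the remark identifying $\ric$ with $-\,nd\lambda$-type curvature of the canonical bundle). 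Equivalently one may just quote the remark that in the parak\"ahler case $\ric=(-nd\lambda)^{1,1}$ up to the identifications in play.

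Finally I would invoke Lemma~\ref{lemma:paracomplexNilpotent}: on a nilpotent Lie group, $\lambda=\frac{n-1}{n}(f_8-f_4)$. But we have already shown $f_4=0=f_8$ in the parak\"ahler case, so $\lambda=0$, hence $d\lambda=0$, hence $\ric=0$. That is the claim.

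The only place requiring care — and the ``main obstacle'' in the sense of bookkeeping rather than real difficulty — is checking that no nonzero contribution to $\ric''$ survives: one must verify that each of the terms $df_6$, $df_2$, $d(f_4\hook\alpha)$, $d(f_8\hook\beta)$, $\ric(\partial(\tau_7)\hook\tau_5)$, $\ric(\partial(\tau_3)\hook\tau_3)$, etc., in Theorem~\ref{thm:restateRicViaForms} genuinely depends only on the $\tau_i$ and $f_i$ that we have shown vanish (the forms $f_2,f_3,f_6,f_7$ are built from $\tau_2,\tau_3,\tau_6,\tau_7$, so they vanish too), leaving nothing. Once that is confirmed the result is immediate, and in fact the cleanest write-up is simply: parak\"ahler $\Rightarrow$ $\tau=0$ and $f_4=f_8=0$ (Proposition~\ref{prop:forms}); nilpotent $\Rightarrow$ $\lambda=\tfrac{n-1}{n}(f_8-f_4)=0$ (Lemma~\ref{lemma:paracomplexNilpotent}); Theorem~\ref{thm:restateRicViaForms} then gives $\ric=n(d\lambda)^{1,1}=0$.
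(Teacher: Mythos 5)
Your argument is correct and is essentially the paper's own: the corollary follows immediately from Lemma~\ref{lemma:paracomplexNilpotent} (which gives $\lambda=\tfrac{n-1}{n}(f_8-f_4)=0$ once $f_4=f_8=0$) together with Theorem~\ref{thm:restateRicViaForms}. One small wording point: parak\"ahler is not merely $dF=0$ but $\nabla F=0$, i.e.\ $\tau=0$ by the very definition of the $\GL(n,\R)$-intrinsic torsion --- which is in fact what you use when you additionally invoke the $d\alpha$, $d\beta$ formulae (valid because $K$ is integrable) to kill $\tau_2$ and $\tau_6$.
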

In contrast with the Riemannian case, Ricci-flat nilpotent Lie groups are not necessarily abelian; Section~\ref{sec:examples} contains several examples. Alongside the parak\"ahler case (Example~\ref{example:ricciflat}), we have constructed Ricci-flat examples with intrinsic torsion in $W_1$, $W_2$ and $W_3$; the absence of the class $W_4$ can be explained by the following:
\begin{proposition}
\label{prop:nilpotentw4w8}
A nilpotent Lie group with an invariant, Einstein almost parak\"ahler structure with intrinsic torsion in $W_4+W_8$
is only Einstein if it is parak\"ahler, in which case it is Ricci-flat.
\end{proposition}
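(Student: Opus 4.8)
The plan is to use the Einstein condition to force the intrinsic torsion to vanish, and then invoke the Corollary that invariant parak\"ahler structures on nilpotent Lie groups are Ricci-flat. First I would note that $W_4+W_8$ meets none of $W_1,W_2,W_5,W_6$, so the hypothesis means $\tau_1=\tau_2=\tau_5=\tau_6=0$; hence the $\GL(V)\times\GL(H)$-intrinsic torsion vanishes and the structure is paracomplex. Lemma~\ref{lemma:paracomplexNilpotent} then gives $\lambda^{1,0}=\tfrac{n-1}{n}f_8$ and $\lambda^{0,1}=-\tfrac{n-1}{n}f_4$, and substituting this into Proposition~\ref{prop:forms} yields $d\alpha=0=d\beta$ and $dF=-2(f_4+f_8)\wedge F$, so the whole intrinsic torsion is carried by the two one-forms $f_4\in\Lambda^{0,1}$ and $f_8\in\Lambda^{1,0}$. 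From $d^2F=0$ we get $d(f_4+f_8)\wedge F=0$, and since the map $\eta\mapsto\eta\wedge F$ from $\Lambda^2T^*$ to $\Lambda^4T^*$ is injective for $n\ge3$ (the case $n=2$ is elementary), $d(f_4+f_8)=0$; thus $(df_4)^{0,2}=0$, $(df_8)^{2,0}=0$, $(df_4)^{1,1}=-(df_8)^{1,1}$ and $d\lambda=-\tfrac{2(n-1)}{n}df_4$.

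Next I would specialise the curvature formulas to this torsion class. With all $\tau_i$ except $\tau_4,\tau_8$ vanishing (so $f_3=f_7=0$), every quadratic term $F(\tau_i,\tau_j)$ and the $\Lambda(df_3+\cdots)$ term in Theorem~\ref{thm:restateRicViaForms} disappears, and using the value of $d\lambda$ above one is left with
\[\ric'=-2(df_4)^{1,1}-2(n-1)f_4\wedge f_8+\bigl(2n\Lambda(df_4)-4(n-1)\langle f_4,f_8\rangle\bigr)F,\]
while Theorem~\ref{thm:RicciS2} collapses to $[\ric'']_{S^2V}=(n-1)\,\epsilon\bigl(\nabla f_4-\tfrac{2n-1}{n}f_4\otimes f_4\bigr)$ and the mirror expression for $f_8$ in $S^2V^*$. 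Since $g$ corresponds to $F$ under $V^*\otimes V\cong\Lambda^{1,1}$ and has no $S^2V\oplus S^2V^*$ part, the structure is Einstein precisely when $\ric''=0$ and $\ric'-sF=0$ for a constant $s$; the first display then says $(df_4)^{1,1}+(n-1)f_4\wedge f_8$ is a constant multiple of $F$, the constant being determined through Corollary~\ref{cor:s}.

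The remaining, and hardest, point is to deduce $f_4=0=f_8$ from $\ric''=0$ using nilpotency. Viewing $f_4$ as a vector field through the metric, $(f_4\hook\alpha)\wedge\beta$ is an invariant $(2n-1)$-form, so by unimodularity of the nilpotent group $d\bigl((f_4\hook\alpha)\wedge\beta\bigr)=0$; together with $d\beta=0$ this kills the $(n,0)$-component of $d(f_4\hook\alpha)$, and Lemma~\ref{lemma:f2f6equations} then identifies $[\ric'']_{S^2V}$ with a multiple of $[d(f_4\hook\alpha)]_{S^2V}$ plus a multiple of $f_4\odot f_4$. Combining the resulting identity with its $V\leftrightarrow H$ mirror for $f_8$, with the relation $(df_4)^{1,1}=-(df_8)^{1,1}$ from the first step, and with a further unimodularity/bracket argument applied to the structure equations should force $f_4=f_8=0$, i.e.\ $dF=0$; the structure is then parak\"ahler, and Ricci-flatness follows from the Corollary on invariant parak\"ahler structures on nilpotent Lie groups. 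I expect this last step --- prying $f_4=f_8=0$ out of the vanishing of $\ric''$ against the nilpotency of $\ad$ --- to be the genuine obstacle, everything preceding it being a mechanical specialisation of formulas already established.
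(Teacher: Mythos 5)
Your mechanical specialisation is correct and agrees with the paper's: you arrive at the same necessary condition $df_4+(n-1)f_4\wedge f_8=kF$ for the Einstein property (the paper's equation \eqref{eqn:f4f8einstein}), together with $df_8=-df_4$ of type $(1,1)$ and $d\lambda=-\tfrac{2(n-1)}{n}df_4$. But the proof is not complete: the decisive step, extracting $f_4=f_8=0$, is only conjectured (``should force'', ``I expect this last step to be the genuine obstacle''), and the route you propose for it does not work as described. You aim to get the conclusion out of $\ric''=0$; however, the unimodularity observation you invoke only shows that the $(n,0)$-component of $d(f_4\hook\alpha)$ vanishes, whereas by Lemma~\ref{lemma:f2f6equations} the quantity feeding into $[\ric'']_{S^2V}$ is the $S^2V$-part of the $(n-1,1)$-component, which that argument does not touch. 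No concrete mechanism is given that converts $\ric''=0$ into the vanishing of $f_4$ and $f_8$.

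The paper closes the argument entirely within the $\ric'$ equation, and the missing idea is simply to differentiate the Einstein condition itself. Applying $d$ to $df_4+(n-1)f_4\wedge f_8=kF$ and using $dF=-2(f_4+f_8)\wedge F$ together with $df_8=-df_4$ gives $(n+1)\,df_4\wedge(f_4+f_8)=0$; since $df_4$ has type $(1,1)$ while $f_4$ and $f_8$ have types $(0,1)$ and $(1,0)$, the pieces $df_4\wedge f_4$ and $df_4\wedge f_8$ vanish separately, whence $df_4\in\Span{f_4\wedge f_8}$. Writing $df_4=c\,f_4\wedge f_8$ and contracting with a vector $Y$ satisfying $f_8(Y)=1$, $f_4(Y)=0$ exhibits $f_4$ as an eigenvector of $(\ad Y)^*$ with eigenvalue $-c$; nilpotency of $\ad Y$ then forces $c=0$, so $df_4=0$. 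The Einstein condition now reads $(n-1)f_4\wedge f_8=kF$, and a decomposable two-form cannot be a nonzero multiple of the non-degenerate $F$, so $f_4\wedge f_8=0$, hence $f_4=f_8=0$, hence $\lambda=0$ and the structure is parak\"ahler; Ricci-flatness then follows as you say. If you wish to keep your $\ric''$ route, you would need an analogous nilpotency argument controlling the $(n-1,1)$-component of $d(f_4\hook\alpha)$, which is precisely the part your unimodularity trick misses.
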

\begin{proof}
Fix a reduction to $\SL(n,\R)$; by Lemma~\ref{lemma:paracomplexNilpotent},
\[d\lambda = \frac{n-1}n (df_8-df_4).\]
On the other hand, Proposition~\ref{prop:forms} gives
$dF=-2(f_4+f_8)\wedge F;$
taking the exterior derivative, 
$0=-2(df_4+df_8)\wedge F,$
and consequently $df_8=-df_4$. In addition, this two-form has type $(1,1)$ because the structure is paracomplex. Thus,
\[d\lambda = -2\frac{n-1}n df_4.\]
The formula for $\ric'$ gives
\[\ric'=2(n-2)df_4+nd\lambda-2(n-1)f_4\wedge f_8 \mod F\]
Thus, the metric is Einstein if there exists $k$ such that
\begin{equation}
 \label{eqn:f4f8einstein}
df_4+(n-1)f_4\wedge f_8 =kF;
\end{equation}
taking the exterior derivative,
\begin{multline*}
(n-1)df_4\wedge f_8 +(n-1)f_4\wedge df_4=-2k(f_4+f_8)\wedge F \\
= -2(f_4+f_8)\wedge(df_4+(n-1)f_4\wedge f_8) =-2(f_4+f_8)\wedge df_4;
 \end{multline*}
this gives
\[(n+1)df_4\wedge(f_4+f_8)=0,\]
and decomposing according to type
\[df_4\wedge f_4=0, \quad df_4\wedge f_8=0.\]
Therefore,
\[df_4\in \Span{f_4\wedge f_8};\]
this is only possible if $df_4$ is zero; by \eqref{eqn:f4f8einstein} this implies $f_4=f_8=0$, which in turn gives $\lambda=0$.
\end{proof}
\begin{remark}
Given an $\SL(n,\R)$-structure on a manifold, it follows from Proposition~\ref{prop:forms} that the intrinsic torsion is contained in $W_4+W_8+W^{1,0}+W^{0,1}$ if and only if $N=0$ and $dF=\theta\wedge F$ for some $1$-form $\theta$; such structures are known in the literature as locally conformally parak\"ahler \cite{GadeaMasque}. The result of Proposition~\ref{prop:nilpotentw4w8} can be rephrased by saying that an Einstein, left-invariant locally conformally parak\"ahler structure on a nilpotent Lie group is parak\"ahler.
\end{remark}

\section{Composite intrinsic torsion classes}
In this section we give a simple construction to build almost parahermitian manifolds in a prescribed intrinsic torsion class using almost parahermitian manifolds of lower dimension as building blocks.

We shall denote by $T_n$ the $2n$-dimensional $\GL(n,\R)$-module \eqref{eqn:linearmodel}; let $W_i(T_{n})$ denote the corresponding intrinsic torsion spaces. We will also use a subscript $n$ to denote the $2n$-dimensional forms $F_n$, $\alpha_n$ and $\beta_n$.

We identify  $T_n\oplus T_m$  with $T_{n+m}$ by the isomorphism that maps the basis
\[\{e^1,\dots,e^n,\allowbreak f^1,\dots, f^m,\allowbreak e^{n+1},\dots,e^{2n},\allowbreak f^{m+1},\dots, f^{2m}\}\]
onto the basis
\[\{E^1,\dotsc, E^{2(m+n)}\};\]
here, $\{e_i\}$, $\{f_i\}$ and $\{E_i\}$ are the standard bases of $T_n$, $T_m$ and $T_{n+m}$ respectively. We obtain the following relations between the forms $F$, $\alpha$ and $\beta$:
\[F_{n+m}=F_n+F_m,\quad \alpha_{n+m}=\alpha_{n}\wedge\alpha_{m},\quad\beta_{n+m}=\beta_{n}\wedge\beta_{m}.\]
Moreover, we have the following lemma:
\begin{lemma}\label{lemma:higherdim}
 The following relations hold:
\begin{gather*}
W_i(T_{n})\subset W_i(T_{n+m})\qquad\text{for}\ i\neq4,8,\\
W_4(T_{n})\subset W_3(T_{n+m})\oplus W_4(T_{n+m}),\qquad W_8(T_{n})\subset W_7(T_{n+m})\oplus W_8(T_{n+m}).
\end{gather*}
Moreover, the projections
\begin{align*}
W_3(T_n)\oplus W_4(T_n)\oplus W_3(T_m)\oplus W_4(T_m)&\to W_3(T_{n+m})\\
W_7(T_n)\oplus W_8(T_n)\oplus W_7(T_m)\oplus W_8(T_m)&\to W_7(T_{n+m})
\end{align*}
are injective.
\end{lemma}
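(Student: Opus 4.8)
The plan is to work entirely at the level of $\GL(n,\R)$-representations, tracking how each intrinsic torsion space $W_i(T_n)$ sits inside the ambient space $\coker\partial_{\GL(n+m,\R)}$ and using the concrete descriptions $W_1=\Lambda^3V^*$, $W_2=V^{2,1}$, $W_3=V_{1,1}^1$, $W_4=V$, and their dual counterparts $W_5,\dots,W_8$. The embedding $T_n\oplus T_m\cong T_{n+m}$ induces $V=V_n\oplus V_m$ and $V^*=V_n^*\oplus V_m^*$ (with $H=H_n\oplus H_m$), so each of the tensor spaces $\Lambda^kV^*$, $S^kV$, etc., for the summands embeds into the corresponding space for $V$. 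First I would treat the easy classes $i=1,2,5,6$: here $W_i$ is built purely out of $\Lambda^{\bullet}V_n^*$ or $\Lambda^{\bullet}V_n$, the inclusion $\Lambda^pV_n^*\hookrightarrow\Lambda^pV^*$ is obvious, and one only needs to check that the image of, say, $\tau_1$-type torsion for $T_n$ still lands in the $W_1$-component of the decomposition for $T_{n+m}$ rather than acquiring $W_4$-like traces; this is clear because elements of $\Lambda^3V_n^*$, viewed in $\Lambda^3V^*$, are automatically trace-free under the contraction $\Lambda$ of \eqref{eqn:LambdaOn21} (they involve no $F$-term), so no $W_4$ piece appears. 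The same argument handles $W_3(T_n)$: an element of $V_{1,1}^1(V_n)$ is trace-free over $V_n$, hence also trace-free over $V$ once we note that the extra contractions run over indices in $V_m$ and vanish on such a tensor; so $W_3(T_n)\subset W_3(T_{n+m})$.

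The genuinely non-trivial statements are those for $W_4$ and $W_8$. An element of $W_4(T_n)\cong V_n$ corresponds via \eqref{eqn:tau4tau8lambda} to $\tau_4=a_i(e^k\otimes e_{ki})$ with $a_ie^{n+i}=f_4$; when we re-express this in the coframe $E^1,\dots,E^{2(n+m)}$, the sum $e^k\otimes e_{ki}$ now runs only over $k\in\{1,\dots,n\}$, so the tensor is no longer a ``full trace'' in the $(n+m)$-dimensional sense. Concretely, I would decompose the image of $\tau_4$ under $\partial_{\GL(n+m,\R)}$ into its $\Lambda^{1,1}_0$-part and its $F$-multiple using $\sigma=[\sigma]_0+F\wedge\Lambda(\sigma)$: the $F$-multiple is the $W_4(T_{n+m})$-component (proportional to $f_4$, now a one-form on the larger space), and the leftover trace-free part is the $W_3(T_{n+m})$-component. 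Explicitly, $e^k\otimes e_{ki}$ summed over $k\le n$ equals $\tfrac1{n+m}(\text{full trace})$ plus a trace-free $V_{1,1}^1$-piece, which is exactly the $W_3(T_{n+m})$-contribution; hence $W_4(T_n)\subset W_3(T_{n+m})\oplus W_4(T_{n+m})$ with nonzero projection to both factors. The statement for $W_8$ follows by applying the symmetry $\sigma$ interchanging $V$ and $H$ (equivalently, dualizing $V\leftrightarrow V^*$).

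For the injectivity of the projections $W_3(T_n)\oplus W_4(T_n)\oplus W_3(T_m)\oplus W_4(T_m)\to W_3(T_{n+m})$, the key is that these four spaces embed ``disjointly enough.'' I would argue as follows: an element of $W_3(T_n)$ is a tensor supported on indices $(i;j,k)$ all $\le n$ that is trace-free over $V_n$; an element of $W_4(T_n)$ produces, by the previous paragraph, a $W_3(T_{n+m})$-piece which is a specific (nonzero) trace-type tensor in the $V_n$-block; the $W_3(T_m),W_4(T_m)$ pieces live in the $V_m$-block. Suppose a combination maps to zero in $W_3(T_{n+m})\cong V_{1,1}^1(V)$. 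Projecting onto the pieces involving only $V_n$-indices forces the $W_3(T_n)$-component plus the $W_3(T_{n+m})$-image of the $W_4(T_n)$-component to vanish; but the former is trace-free over $V_n$ while the latter is a nonzero multiple of a pure $V_n$-trace pattern, and these are linearly independent inside $\Lambda^2V_n^*\otimes V_n\subset$ the relevant tensor space — so both vanish, forcing $\tau_3|_{T_n}=0$ and $f_4|_{T_n}=0$; symmetrically for the $T_m$ pieces. The $W_7/W_8$ case is again obtained by the $V\leftrightarrow V^*$ symmetry. The main obstacle is bookkeeping: one must be careful that the ``mixed'' index ranges ($j\le n$, $k>n$, etc.) in the $(n+m)$-dimensional decomposition do not secretly identify a $W_4(T_n)$-image with a genuine $W_3$ element coming from somewhere else, and that the Young-diagram type $(1,0)$-vs-$(0,1)$ weights under $\R^*$ (which separate $W_3$ from $W_7$, and $W_4$ from $W_8$) are preserved — this is what ultimately makes the two projections independent and the argument clean.
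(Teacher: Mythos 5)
Your proposal is correct and follows essentially the same route as the paper: the inclusions for $i\neq 4,8$ via the obvious embeddings of the $\Lambda^\bullet V_n^{(*)}$-spaces together with trace-freeness, the splitting of the image of $\tau_4$ into its pure-trace ($W_4(T_{n+m})$) and trace-free ($W_3(T_{n+m})$) parts, and duality for $W_8$. The only cosmetic difference is in the injectivity step, where you project onto the component with all indices in $V_n$ while the paper isolates $\tau_4$ via the mixed component $V_m^*\otimes(V_m\wedge V_n)$; both correctly separate the trace pattern coming from $W_4(T_n)$ from the trace-free $W_3$ contributions.
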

\begin{proof}
Consider the  commutative diagram
\begin{equation*}
\xymatrix{
\Lambda^2T^*_n\otimes T_n\ar@{->}[r]^f\ar@{->}[d] & \Lambda^2T^*_{n+m}\otimes T_{n+m}\ar@{->}[d] \\
\text{coker}(\partial_n)\ar@{->}[r]^{\hat{f}} & \text{coker}(\partial_{n+m})
}
\end{equation*}
where $f(a_{IJK}e^{IJ}\otimes e_K)=a_{IJK}e^{IJ}\otimes e_K$. The map $\hat{f}$ is injective because $f(\Lambda^2T^*_n\otimes T_n)\cap\partial(T^*_{n+m}\otimes \gl({n+m}))=\{0\}$.

For $i\neq4,8$, let $\tau_i$  denote an element of $W_i(T_n)$. It is obvious that $\Lambda^3V_n^*\subset\Lambda^3V_{n+m}^*$; hence, $\hat{f}(\tau_1)\in W_1(T_{n+m})$. Then the inclusion easily follows for $\tau_2$: in fact, $\hat{f}(\tau_1+\tau_2)\in V^*_{n+m}\otimes\Lambda^2V^*_{n+m}$ and  $\hat{f}(\tau_2)$ has no component in $\Lambda^3V_{n+m}^*$ because  $\tau_2$ has no component in $\Lambda^3V_{n}^*$. Now recall that $W_3(T_n)$ is the kernel of the contraction $V_n^*\otimes\Lambda^2V_n\to V_n$;  it easily follows that $\hat{f}(\tau_3)$ belongs to the kernel of the corresponding contraction defined on $V^*_{n+m}\otimes\Lambda^2V_{n+m}$.

Finally, if $\tau_4=a_ie^k\otimes e_{ki}$, then $\hat{f}(\tau_4)\in V^*_{n+m}\otimes\Lambda^2V_{n+m}$ is given by
\[
\hat{f}(\tau_4)=a_ie^k\otimes e_{ki}
=\biggl(a_ie^k\otimes e_{ki}-\frac{n-1}{m}a_if^h\otimes f_h\wedge e_i\biggr)+\frac{n-1}{m}a_if^h\otimes f_h\wedge e_i,
\]
where the first summand lies in $W_3(T_{n+m})$ and the second one lies in $W_4(T_{n+m})$; a similar decomposition applies to any element $\tau_4'= a_i'f^k\otimes f_{ki}$ of $W_4(T_{n+m})$. Writing explicitly the image in $W_3(T_{n+m})$ of an element $\tau_3+\tau_4+\tau_3'+\tau_4'$ and imposing that the component in $V_m^*\otimes (V_m\wedge V_n)$ is zero we obtain $\tau_4=0$; similarly, one finds that $\tau_4'=0$. Injectivity follows.

The same arguments can be applied to $\tau_5, \tau_6,\tau_7$ and $\tau_8$. 
\end{proof}

At the geometric level, given two almost parahermitian manifolds $(N,g_n,\allowbreak K_n,F_n)$ and $(M,g_m,K_m,F_m)$, on the product $N\times M$ we consider the natural almost parahermitian structure
\[(N\times M, g_n+g_m, K_n+K_m,F_n+F_m).\]
In our setting, an \emph{intrinsic torsion class} is a subset of $\{\mathcal{W}_1,\dots, \mathcal{W}_8\}$; it is customary to represent a subset $\{\mathcal{W}_{i_1},\dots, \mathcal{W}_{i_h}\}$ as a formal sum 
\begin{equation}
 \label{eqn:formalsum}
\mathcal{W}_{i_1}+ \dots + \mathcal{W}_{i_h}.
\end{equation}
Accordingly, the union of two intrinsic torsion classes $I$ and $J$ is written as $I+J$.

An almost parahermitian manifold $(M,g,K,F)$ is in the intrinsic torsion class \eqref{eqn:formalsum} if the components of the intrinsic torsion which are not identically zero are precisely $\tau_{i_1},\dots, \tau_{i_h}$; we write $\mathcal{W}(M)=\mathcal{W}_{i_1}+ \dots + \mathcal{W}_{i_h}$.
\begin{proposition}\label{prop:higherdim}
If $(N,g,K,F)$ is a $2n$-dimensional almost parahermitian manifold with $\tau_4=0=\tau_8$, then for each $m\in\N$ the natural almost parahermitian structure on the $2(m+n)$-dimensional manifold $N\times\R^{2m}$ is in the same intrinsic torsion class.
\end{proposition}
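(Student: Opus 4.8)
The plan is to deduce the statement from Lemma~\ref{lemma:higherdim}, the key point being that on a pseudoriemannian product the Levi-Civita connection is the product of the two Levi-Civita connections, and that the standard almost parahermitian structure on $\R^{2m}$ is flat and parak\"ahler, so its intrinsic torsion vanishes identically.

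Concretely, I would first fix a connection $\omega^N$ on the $\GL(n,\R)$-structure of $N$, with torsion $\Theta^N\in\Lambda^2T_n^*\otimes T_n$; by definition of intrinsic torsion, the class of $\Theta^N$ in $\coker\partial_n$ is $\tau^N=\tau_1^N+\dots+\tau_8^N$, and by hypothesis $\tau_4^N=0=\tau_8^N$, so $\tau^N=\sum_{i\neq4,8}\tau_i^N$. On $N\times\R^{2m}$ I would take the product of $\omega^N$ with the standard flat connection of $\R^{2m}$; it is a connection on the product $\GL(n+m,\R)$-structure, and --- because the standard coframe of $\R^{2m}$ is closed, that connection is flat, and there are no mixed terms --- its torsion is $f(\Theta^N)$, where $f\colon\Lambda^2T_n^*\otimes T_n\to\Lambda^2T_{n+m}^*\otimes T_{n+m}$ is the inclusion used in the proof of Lemma~\ref{lemma:higherdim}. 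Passing to cokernels and using the commutativity of the square in that proof, the $\GL(n+m,\R)$-intrinsic torsion of $N\times\R^{2m}$ is $\hat f(\tau^N)$.

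It then remains to identify its components. Since $\tau_4^N=\tau_8^N=0$ and, by Lemma~\ref{lemma:higherdim}, $\hat f$ maps $W_i(T_n)$ into $W_i(T_{n+m})$ for $i\neq4,8$, we get
\[\hat f(\tau^N)=\sum_{i\neq4,8}\hat f(\tau_i^N), \qquad \hat f(\tau_i^N)\in W_i(T_{n+m}),\]
so the $W_4(T_{n+m})$- and $W_8(T_{n+m})$-components of the product's intrinsic torsion are zero, while its $W_i(T_{n+m})$-component equals $\hat f(\tau_i^N)$ for each $i\neq4,8$. As $\hat f$ is injective, $\hat f(\tau_i^N)\neq0$ exactly when $\tau_i^N\neq0$, and therefore $\mathcal{W}(N\times\R^{2m})=\{\mathcal{W}_i:\tau_i^N\neq0\}=\mathcal{W}(N)$.

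The only genuinely non-formal point --- and the main, if minor, obstacle --- is the verification that the torsion of the product connection is exactly $f(\Theta^N)$, i.e.\ that the $\R^{2m}$-block and the mixed terms contribute nothing; this is precisely where flatness of $\R^{2m}$ (equivalently, parallelism of its standard structure) enters. On a Lie group this is immediate from writing $\omega^N$ and the brackets in the adapted coframe; in general it follows by working in a local product frame. One could also bypass connections entirely and argue from the inverse formulae of Proposition~\ref{prop:forms} applied to $F_{n+m}=F_n+F_m$, $\alpha_{n+m}=\alpha_n\wedge\alpha_m$, $\beta_{n+m}=\beta_n\wedge\beta_m$ together with $dF_m=d\alpha_m=d\beta_m=0$, but that route requires checking each $\tau_i$ separately.
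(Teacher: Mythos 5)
Your argument is correct, but it reaches the conclusion by a different route than the paper. The paper's proof is the one-liner ``Follows from Proposition~\ref{prop:forms} and Lemma~\ref{lemma:higherdim}'': it is the alternative you mention in your last sentence, namely computing $dF_{n+m}=dF_n$, $d\alpha_{n+m}=d\alpha_n\wedge\alpha_m$, $d\beta_{n+m}=d\beta_n\wedge\beta_m$ from the closedness of the flat forms on $\R^{2m}$, reading off the $\tau_i$ of the product from the inverse formulae of Proposition~\ref{prop:forms}, and then locating them inside $W_i(T_{n+m})$ via Lemma~\ref{lemma:higherdim}. You instead work one level down, directly with a connection on the $\GL(n,\R)$-structure: the product of $\omega^N$ with the flat connection is a connection on the product structure, its torsion is $f(\Theta^N)$ because the $\R^{2m}$-block is torsion-free and the mixed terms vanish (lifted fields commute and are parallel in the transverse directions), and the commutative square from the proof of Lemma~\ref{lemma:higherdim} then gives the intrinsic torsion of the product as $\hat f(\tau^N)$. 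The component bookkeeping --- $\hat f(W_i(T_n))\subset W_i(T_{n+m})$ for $i\neq4,8$, injectivity of $\hat f$, and the hypothesis $\tau_4=\tau_8=0$ to avoid the spill-over of $W_4$ into $W_3$ and of $W_8$ into $W_7$ --- is identical in both treatments. What your route buys is that it sidesteps the inverse formulae of Proposition~\ref{prop:forms} entirely and makes the role of Lemma~\ref{lemma:higherdim}'s commutative diagram explicit; what the paper's route buys is brevity, since all the hard work is already packaged in Proposition~\ref{prop:forms}. You correctly isolate the only non-formal step (vanishing of the mixed torsion terms), and your justification of it is adequate.
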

\begin{proof}
Follows from Proposition~\ref{prop:forms} and Lemma \ref{lemma:higherdim}.
\end{proof}
More generally, we can find non-trivial intrinsic torsion classes by combining two almost parahermitian manifolds. Indeed, by the same arguments we can prove the following:
\begin{proposition}

Let $M,N$ be two almost parahermitian manifolds of dimension respectively $2m$ and $2n$, each with $\tau_4=0=\tau_8$; then the intrinsic torsion class of the natural almost parahermitian structure on $M\times N$ is the union of the intrinsic torsion classes of $M$ and $N$.
\end{proposition}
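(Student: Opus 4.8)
The plan is to reduce the statement to the computations already carried out in Lemma~\ref{lemma:higherdim} and Proposition~\ref{prop:higherdim}, since the only genuinely new point is that on a product the intrinsic torsion of each factor is ``seen'' by the product structure without any cancellation or mixing beyond the $\mathcal{W}_3,\mathcal{W}_4$ and $\mathcal{W}_7,\mathcal{W}_8$ interactions, which are precisely excluded by the hypothesis $\tau_4=0=\tau_8$. First I would fix adapted coframes $e^1,\dots,e^{2m}$ on $M$ and $f^1,\dots,f^{2n}$ on $N$, assemble them into the coframe $E^1,\dots,E^{2(m+n)}$ of $T_{m+n}$ as in the identification $T_m\oplus T_n\cong T_{m+n}$ preceding Lemma~\ref{lemma:higherdim}, and note that the relations $F_{m+n}=F_m+F_n$, $\alpha_{m+n}=\alpha_m\wedge\alpha_n$, $\beta_{m+n}=\beta_m\wedge\beta_n$ hold pointwise. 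Since exterior differentiation on the product is the sum of the two factor differentials (on invariant forms, or simply by naturality of $d$ under the projections), one gets $dF_{m+n}=dF_m+dF_n$ and, via the Leibniz rule, $d\alpha_{m+n}=d\alpha_m\wedge\alpha_n+(-1)^m\alpha_m\wedge d\alpha_n$, and similarly for $\beta$.

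Next I would invoke Proposition~\ref{prop:forms}, which expresses each component $\tau_i$, together with $\lambda$, $f_4$ and $f_8$, as an explicit function of $dF$, $d\alpha$, $d\beta$. The key structural observation is that the inclusions of Lemma~\ref{lemma:higherdim} are compatible with these formulae: the map $\hat f$ of that lemma sends $W_i(T_m)\oplus W_i(T_n)$ into $W_i(T_{m+n})$ for $i\neq 4,8$, and it is the restriction to the product of the intrinsic-torsion map. Thus, writing the torsion of the product structure via Proposition~\ref{prop:forms} in terms of $dF_{m+n}$, $d\alpha_{m+n}$, $d\beta_{m+n}$, and substituting the product expressions above, each $\tau_i(M\times N)$ for $i\neq4,8$ becomes exactly the image under $\hat f$ of $\tau_i(M)\oplus\tau_i(N)$, and $\lambda(M\times N)=\lambda(M)+\lambda(N)$, $f_4(M\times N)=f_4(M)+f_4(N)=0$, $f_8(M\times N)=0$. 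Since $\hat f$ is injective on each $W_i$, the component $\tau_i(M\times N)$ vanishes identically if and only if both $\tau_i(M)$ and $\tau_i(N)$ vanish identically. (Here the hypothesis $\tau_4=\tau_8=0$ on each factor is what guarantees that no contribution to $W_3$ or $W_7$ of the product is produced by a factor's $W_4$ or $W_8$ class, so that the ``$i\neq4,8$'' case of Lemma~\ref{lemma:higherdim} genuinely covers everything.) It remains to check that $\tau_4$ and $\tau_8$ of the product also vanish, which is immediate from $f_4(M\times N)=f_8(M\times N)=0$ together with the characterization \eqref{eqn:tau4tau8lambda}, since those forms encode precisely the $W_4$ and $W_8$ parts.

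Putting this together, the non-identically-zero components of the intrinsic torsion of $M\times N$ are exactly the indices $i$ for which $\tau_i(M)$ or $\tau_i(N)$ is not identically zero, i.e. $\mathcal{W}(M\times N)=\mathcal{W}(M)\cup\mathcal{W}(N)$, which is what we had to prove. I expect the main obstacle to be purely bookkeeping: verifying that the several explicit contraction formulae of Proposition~\ref{prop:forms} really do commute with the block inclusion $T_m\oplus T_n\hookrightarrow T_{m+n}$ once one plugs in the split forms $F_m+F_n$, $\alpha_m\wedge\alpha_n$, $\beta_m\wedge\beta_n$ — in particular tracking the $\Lambda$-operator and the $\pi_{0,1},\pi_{1,0}$ maps, whose normalizations involve $n$ and therefore change with dimension. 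But this is exactly the content already extracted in the proof of Lemma~\ref{lemma:higherdim} (where the decomposition of a $\tau_4$-type element across $W_3(T_{m+n})$ and $W_4(T_{m+n})$ is written out), so the argument is a routine assembly rather than a new computation, and I would simply write ``by the same arguments as in Lemma~\ref{lemma:higherdim} and Proposition~\ref{prop:higherdim}'' for the details.
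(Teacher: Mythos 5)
Your argument is correct and follows the paper's own route: the paper proves this proposition by exactly the reduction you describe, namely the commutative diagram and injectivity statements of Lemma~\ref{lemma:higherdim} combined with Proposition~\ref{prop:forms}, with the hypothesis $\tau_4=0=\tau_8$ serving precisely to suppress the only cross-terms ($W_4\to W_3$ and $W_8\to W_7$). One immaterial slip: because the normalizations of $\Lambda$ and $\pi_{0,1}$ depend on the dimension, $\lambda(M\times N)$ is a weighted combination of $\lambda(M)$ and $\lambda(N)$ rather than their sum, but $\lambda$ does not enter the $\GL(n,\R)$ intrinsic torsion class, so this does not affect the conclusion.
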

The general case requires more notation. We say a map from the set of intrinsic torsion classes to itself is additive if it satisfies $g(I + J)=g(I) + g(J)$. Consider the additive map
\[g\colon \mathcal{W}_i\to \begin{cases} \mathcal{W}_i & i\neq 4,8\\ \mathcal{W}_3+\mathcal{W}_4 & i=4\\\mathcal{W}_7+\mathcal{W}_8 & i=8   \end{cases};\]
again by Lemma \ref{lemma:higherdim}, we find:
\begin{proposition}
\label{prop:products}
The intrinsic torsion of the natural almost parahermitian structure on a product $N\times M$ is
\[\mathcal{W}(N\times M)=g(\mathcal{W}(N)+\mathcal{W}(M)).\]
\end{proposition}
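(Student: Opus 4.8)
The plan is to reduce the statement to Lemma~\ref{lemma:higherdim} together with the behaviour of the forms $F$, $\alpha$, $\beta$ under products established just before it, exactly as was done for the special case $\tau_4=0=\tau_8$ in Proposition~\ref{prop:higherdim} and its companion. First I would set up notation: write $N$ of dimension $2n$ and $M$ of dimension $2m$, fix adapted coframes $e^1,\dots,e^{2n}$ on $N$ and $f^1,\dots,f^{2m}$ on $M$, and use the identification $T_n\oplus T_m\cong T_{n+m}$ recalled above, so that $F_{n+m}=F_n+F_m$, $\alpha_{n+m}=\alpha_n\wedge\alpha_m$, $\beta_{n+m}=\beta_n\wedge\beta_m$. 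On the product, the minimal connection splits as the sum of the two minimal connections, hence the torsion $\Theta_{n+m}$ is the image of $\Theta_n+\Theta_m$ under the map $f$ of Lemma~\ref{lemma:higherdim}, and correspondingly the intrinsic torsion $\tau^{N\times M}=\hat f(\tau^N)+\hat f(\tau^M)$ (and $\lambda^{N\times M}=\lambda^N+\lambda^M$, although $\lambda$ does not enter the definition of the intrinsic torsion class).

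The core of the argument is then purely representation-theoretic and is already packaged in Lemma~\ref{lemma:higherdim}: for $i\neq 4,8$ one has $\hat f(\tau_i^N)\in W_i(T_{n+m})$, so a nonzero component $\tau_i^N$ contributes a nonzero $\mathcal{W}_i$ to the product; while $\hat f(\tau_4^N)$ has components in both $W_3(T_{n+m})$ and $W_4(T_{n+m})$, given explicitly by the displayed decomposition $\hat f(\tau_4)=\bigl(a_ie^k\otimes e_{ki}-\tfrac{n-1}{m}a_if^h\otimes f_h\wedge e_i\bigr)+\tfrac{n-1}{m}a_if^h\otimes f_h\wedge e_i$, and similarly for $\tau_8^N$ into $W_7\oplus W_8$. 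Thus $g$ is precisely the map recording which $\mathcal{W}_j(T_{n+m})$ receive a contribution from each $\mathcal{W}_i(T_n)$ (and symmetrically from $\mathcal{W}_i(T_m)$). Since $g$ is additive by construction, $\mathcal{W}(N\times M)\subseteq g(\mathcal{W}(N))+g(\mathcal{W}(M))=g(\mathcal{W}(N)+\mathcal{W}(M))$.

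For the reverse inclusion — that no cancellation occurs and every $\mathcal{W}_j$ predicted by $g$ actually appears — I would invoke the injectivity statements in Lemma~\ref{lemma:higherdim}: the projections $W_3(T_n)\oplus W_4(T_n)\oplus W_3(T_m)\oplus W_4(T_m)\to W_3(T_{n+m})$ and $W_7(T_n)\oplus W_8(T_n)\oplus W_7(T_m)\oplus W_8(T_m)\to W_7(T_{n+m})$ are injective, and for $i\neq 3,4,7,8$ the map $\hat f$ restricted to $W_i(T_n)\oplus W_i(T_m)$ lands injectively in $W_i(T_{n+m})$ since $\hat f$ itself is injective and the images for $N$ and for $M$ sit in complementary subspaces. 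Hence if $\tau_i^N\neq 0$ then the corresponding component(s) of $\tau^{N\times M}$ are nonzero, and conversely every nonzero component of $\tau^{N\times M}$ comes from a nonzero component of $\tau^N$ or $\tau^M$. I expect the only mildly delicate point to be bookkeeping the $W_3/W_4$ (resp.\ $W_7/W_8$) mixing correctly: one must check that a nonzero $\tau_4^N$ genuinely produces a nonzero $W_3(T_{n+m})$-component (which it does, by the explicit formula, as long as $m\geq 1$) so that $g(\mathcal{W}_4)=\mathcal{W}_3+\mathcal{W}_4$ is sharp, and that the $W_3$-components coming from $\tau_3^N$, $\tau_4^N$, $\tau_3^M$, $\tau_4^M$ cannot conspire to cancel — which is exactly the injectivity of the stated projection. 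With these observations the equality $\mathcal{W}(N\times M)=g(\mathcal{W}(N)+\mathcal{W}(M))$ follows.
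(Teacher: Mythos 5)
Your proposal is correct and follows essentially the same route as the paper, which proves this proposition simply by invoking Lemma~\ref{lemma:higherdim} (the inclusions $W_i(T_n)\subset W_i(T_{n+m})$ for $i\neq 4,8$, the splitting of $W_4$ and $W_8$ into $W_3\oplus W_4$ and $W_7\oplus W_8$, and the injectivity of the stated projections). Your expansion of the bookkeeping --- in particular that the intrinsic torsion of the product is $\hat f(\tau^N)+\hat f(\tau^M)$ and that no cancellation can occur --- is exactly the intended argument.
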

In the situation of Proposition~\ref{prop:products}, if $M$ and $N$ are Einstein with the same scalar curvature $s$, then the product $M\times N$ is also Einstein with scalar curvature $s$. In particular, the results of Section~\ref{sec:examples} imply:
\begin{proposition}
\label{prop:ricciflatproducts}
Each intrinsic torsion class involving only $\mathcal{W}_1$, $\mathcal{W}_2$, $\mathcal{W}_3$, $\mathcal{W}_5$, $\mathcal{W}_6$ and $\mathcal{W}_7$ contains compact manifolds with a Ricci-flat $\GL(n,\R)$-structure for $n\gg1$.
\end{proposition}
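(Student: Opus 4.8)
The plan is to combine the explicit Ricci-flat examples of Section~\ref{sec:examples} with the product construction encoded in Proposition~\ref{prop:products}, exploiting the fact that all the pure intrinsic torsion classes not involving $\mathcal{W}_4$ or $\mathcal{W}_8$ have already been realized by compact, non-flat, Ricci-flat nilmanifolds. Concretely, one has: $\mathcal{W}_1$ realized by the examples of Theorem~\ref{thm:nkricciflat}, $\mathcal{W}_2$ by the nilmanifold $(0,0,0,12)$ of Proposition~\ref{prop:goldberg}, $\mathcal{W}_3$ by the $8$-dimensional nilmanifold $(0,0,0,0,0,0,56,57)$, and additionally the parak\"ahler class (empty intrinsic torsion class) by Example~\ref{example:ricciflat}. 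The classes $\mathcal{W}_5,\mathcal{W}_6,\mathcal{W}_7$ are then obtained from $\mathcal{W}_1,\mathcal{W}_2,\mathcal{W}_3$ by applying the symmetry $\sigma$ interchanging $V$ and $H$, which swaps $W_i$ with $W_{i+4}$; note that $\sigma$ applied to a compact nilmanifold example produces again a compact nilmanifold example with the same curvature properties, since it merely reverses the roles of the two distributions.

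First I would fix, for each $i\in\{1,2,3,5,6,7\}$, a compact nilmanifold $M_i$ carrying an invariant, non-flat, Ricci-flat $\GL(n_i,\R)$-structure of pure intrinsic torsion class $\mathcal{W}_i$; all of these are scalar-flat (indeed Ricci-flat). I would also fix a compact Ricci-flat parak\"ahler nilmanifold $M_0$ (Example~\ref{example:ricciflat}), whose intrinsic torsion class is empty. Given an arbitrary intrinsic torsion class $I\subseteq\{\mathcal{W}_1,\mathcal{W}_2,\mathcal{W}_3,\mathcal{W}_5,\mathcal{W}_6,\mathcal{W}_7\}$, write $I=\mathcal{W}_{i_1}+\dots+\mathcal{W}_{i_h}$ and form the product $M=M_{i_1}\times\dots\times M_{i_h}$, padding with a factor $M_0$ (or a flat torus) if $I$ is empty so that the dimension can be taken as large as we please. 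Each factor has $\tau_4=0=\tau_8$, so the additive map $g$ of Proposition~\ref{prop:products} acts as the identity on $\mathcal{W}_{i_j}$ for every $j$; hence by Proposition~\ref{prop:products} the natural almost parahermitian structure on $M$ has intrinsic torsion class exactly $\mathcal{W}_{i_1}+\dots+\mathcal{W}_{i_h}=I$. Since each factor is Ricci-flat with the same scalar curvature $0$, the product is Einstein with scalar curvature $0$, i.e.\ Ricci-flat; and since at least one factor is non-flat, the product is non-flat (the curvature of a product contains the curvature of each factor as a direct summand). Finally $M$ is compact, being a finite product of compact manifolds, and $n=\tfrac12\dim M$ can be taken arbitrarily large by inserting extra flat-torus factors, accounting for the ``$n\gg1$'' in the statement.

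The routine verifications are: that Proposition~\ref{prop:products} indeed yields the stated class (immediate from its statement together with $\tau_4=\tau_8=0$ on each factor), that the product of Ricci-flat Einstein manifolds with common scalar curvature is Ricci-flat (since $\ric_{M\times N}=\ric_M+\ric_N$ under the splitting of $T(M\times N)$), and that non-flatness is inherited (the Riemann tensor of the product restricted to a factor agrees with that factor's Riemann tensor). The only genuine subtlety, and the point I would be most careful about, is bookkeeping the dimension count: the examples realizing $\mathcal{W}_1$ and $\mathcal{W}_3$ live in dimension $8$ while the $\mathcal{W}_2$ example lives in dimension $4$, so a product realizing, say, $\mathcal{W}_1+\mathcal{W}_2+\mathcal{W}_3$ has $n=10$; to cover all $n\gg1$ uniformly I would insert a factor $\R^{2m}/\Z^{2m}$ with its flat parak\"ahler structure, which has empty intrinsic torsion class and does not disturb Ricci-flatness, invoking Proposition~\ref{prop:higherdim} to see that the class is unchanged. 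With these observations assembled the proof is essentially a one-paragraph citation of Propositions~\ref{prop:products} and~\ref{prop:higherdim} and the list of examples from Section~\ref{sec:examples}.
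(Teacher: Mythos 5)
Your proposal is correct and follows essentially the same route as the paper, which derives Proposition~\ref{prop:ricciflatproducts} directly from the pure-class examples of Section~\ref{sec:examples}, the $V\leftrightarrow H$ symmetry swapping $\mathcal{W}_i$ with $\mathcal{W}_{i+4}$, and Propositions~\ref{prop:higherdim} and~\ref{prop:products} together with the observation that a product of Einstein factors with equal scalar curvature is Einstein. Your extra bookkeeping (padding with flat tori to reach arbitrary $n$, and checking non-flatness of the product) matches what the paper leaves implicit.
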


Unlike $\mathcal{W}_1$ and $\mathcal{W}_2$, the intrinsic torsion classes $\mathcal{W}_3$ and $\mathcal{W}_4$ are not Ricci-flat; this is indicated by the formulae of Theorem~\ref{thm:restateRicViaForms}, and can be verified through the following examples:
\begin{example}
In the class $\mathcal{W}_3$, the standard $\SL(3,\R)$-structure on the Lie algebra $(0,0,0,0,45,46)$ has intrinsic torsion
\[\tau_3 = \frac{1}{2}  e_2\otimes e^{45}-\frac{1}{2}  e_3\otimes e^{46};\]
the Ricci tensor is $-e^4\otimes e^4$.
\end{example}
\begin{example}
A non-Einstein example with intrinsic torsion contained in $W_4$ is the standard $\SL(3,\R)$-structure on the Lie algebra \[(-14,0,0, 0,45,46).\] In this case the Ricci curvature is $e^4\otimes e^4\in S^2V$.
\end{example}

\begin{remark}
Using this last example, together with those of Section \ref{sec:examples}, the construction of Proposition~\ref{prop:products} allows one to produce almost parahermitian structures in any given intrinsic torsion class.
\end{remark}

\bibliographystyle{plain}
\bibliography{parahermitian}

\small\noindent Dipartimento di Matematica e Applicazioni, Universit\`a di Milano Bicocca, via Cozzi 55, 20125 Milano, Italy.\\
\texttt{diego.conti@unimib.it}\\
\texttt{federico.rossi@unimib.it}

\end{document}